\date{\today}
\author{Nicolas Tholozan}
\address{Laboratoire J.A. Dieudonn\'e \\
UMR CNRS 7351 \\
Universit\'e de Nice Sophia-Antipolis \\
Parc Valrose \\
06108 NICE Cedex 02\\
FRANCE}
\email{tholozan@unice.fr}
\thanks{The author acknowledges support from U.S. National Science Foundation grants DMS 1107452, 1107263, 1107367 "RNMS: Geometric Structures and Representation Varieties" (the GEAR Network).}
\title[Surface group representations and AdS $3$-manifolds]{Dominating surface group representations and deforming closed anti-de Sitter $3$-manifolds}
\newcommand{\R}{\mathbb{R}}
\newcommand{\norm}[1]{\left\Vert#1\right\Vert}
 \newcommand{\anglehyp}[1]{\angle_{hyp}(#1)}
 \newcommand{\axe}{\mathrm{axis}}
 \newcommand{\tend}[1]{\underset{#1}{\longrightarrow}}
\newcommand{\Teich}{\mathcal{T}}
\newcommand{\Adm}{\mathrm{Adm}}
\newcommand{\Dom}{\mathrm{Dom}}
\newcommand{\C}{\mathbb{C}}
\newcommand{\N}{\mathbb{N}}
\renewcommand{\H}{\mathbb{H}}
\newcommand{\Isom}{\mathrm{Isom}}
\newcommand{\Hom}{\mathrm{Hom}}
\newcommand{\Id}{\mathrm{Id}}
\newcommand{\Rep}{\mathrm{Rep}}
\newcommand{\CAT}{\mathrm{CAT}}
\newcommand{\Tr}{\mathrm{Tr}}
\newcommand{\scal}[2]{\left \langle #1, #2 \right \rangle}
\renewcommand{\d}{\mathrm{d}}
\newcommand{\function}[5]{
\begin{array}{rrcl}
#1 : & #2 & \to & #3 \\
\ & #4 & \mapsto & #5
\end{array}
}
\renewcommand{\epsilon}{\varepsilon}
\newcommand{\1}{\mathbf{1}}
\newcommand{\T}{\mathrm{T}}
\newcommand{\PSL}{\mathrm{PSL}}
\newcommand{\PSO}{\mathrm{PSO}}
\renewcommand{\phi}{\varphi}
\newcommand{\Lip}{\mathrm{Lip}}
\DeclareMathOperator{\im}{Im}
\newcommand{\Vol}{\mathrm{Vol}}
\renewcommand{\tilde}[1]{\widetilde{#1}}
\newcommand{\PPhi}{\mathbf{\Phi}}
\newcommand{\PPsi}{\mathbf{\Psi}}
\newcommand{\E}{\mathbf{E}}
\newcommand{\Grad}{\mathbf{Grad}}
\newcommand{\QD}{\mathrm{QD}}
\renewcommand{\T}{\mathrm{T}}
\newcommand{\F}{\mathbf{F}}
\renewcommand{\Im}{\mathrm{Im}}
\renewcommand{\Re}{\mathrm{Re}}
\newcommand{\Z}{\mathbb{Z}}
\theoremstyle{prop}
\newtheorem{prop} {Proposition} [section]
\newtheorem*{propstar}{Proposition}
\newtheorem{thm}[prop] {Theorem}
\newtheorem{lem}[prop]{Lemma}
\newtheorem{coro}[prop]{Corollary}
\theoremstyle{definition}
\newtheorem{definition}[prop]{Definition}
\theoremstyle{remark}
\newtheorem{rmk}[prop]{Remark}
\theoremstyle{theorem}
\newtheorem*{CiteThm}{Theorem}
\newtheorem{BigThm}{Theorem}
\newtheorem{BigCoro}[BigThm]{Corollary}
\begin{document}

\begin{abstract}
Let $S$ be a closed oriented surface of negative Euler characteristic and $M$ a complete contractible Riemannian manifold. A Fuchsian representation $j: \pi_1(S) \to \Isom^+(\H^2)$ \emph{strictly dominates} a representation $\rho:\pi_1(S)\to \Isom(M)$ if there exists a $(j, \rho)$-equivariant map from $\H^2$ to $M$ that is $\lambda$-Lipschitz for some $\lambda <1$. In a previous paper by Deroin--Tholozan, the authors construct a map $\PPsi_\rho$ from the Teichm\"uller space $\Teich(S)$ of the surface $S$ to itself and prove that, when $M$ has sectional curvature $\leq -1$, the image of $\PPsi_\rho$ lies (almost always) in the domain $\Dom(\rho)$ of Fuchsian representations stricly dominating $\rho$. Here we prove that $\PPsi_\rho: \Teich(S) \to \Dom(\rho)$ is a homeomorphism. As a consequence, we are able to describe the topology of the space of pairs of representations $(j,\rho)$ from $\pi_1(S)$ to $\Isom^+ (\H^2)$ with $j$ Fuchsian strictly dominating $\rho$. In particular, we obtain that its connected components are classified by the Euler class of $\rho$. The link with anti-de Sitter geometry comes from a theorem of Kassel stating that those pairs parametrize deformation spaces of anti-de Sitter structures on closed $3$-manifolds.
\end{abstract} 

\maketitle

\section*{Introduction}

\subsection{Closed AdS $3$-manifolds}

An \emph{anti-de Sitter} (AdS) manifold is a smooth manifold equipped with a Lorentz metric of constant negative sectional curvature. In dimension $3$, those manifolds are locally modelled on $\PSL(2,\R)$ with its Killing metric, whose isometry group identifies to a $\Z/2\Z \times \Z/ 2\Z$ extension of $\PSL(2,\R) \times \PSL(2,\R)$ acting by left and right translations, i.e.
$$(g_1,g_2)\cdot x = g_1 x g_2^{-1}$$
for all $(g_1,g_2) \in \PSL(2,\R) \times \PSL(2,\R)$ and all $x\in \PSL(2,\R)$.

The specific interest for anti-de Sitter structures on $3$-manifolds takes its roots in the geome\-trization program of Thurston. Following Thurston's classification of the eight Riemannian geometries in dimension $3$, Scott \cite{Scott83} proved that a large class of closed $3$-manifolds could be modelled on the sixth geometry, that is, $\tilde{\PSL(2,\R)}$ with a left invariant Riemannian metric that is also $\tilde{\PSO(2)}$-invariant on the right. In particular, those manifolds carry an AdS structure, called standard. At first, it was conjectured that all closed AdS $3$-manifolds had to be standard (the conjecture still holds in higher dimension, see \cite{Zeghib98}). However, it turned out progressively that the space of AdS structures on closed $3$-manifolds is much richer than the space of standard structures.

First, Goldman \cite{Goldman85} noticed that, in some cases, a standard structure could be deformed into a non-standard one. Namely, he proved that, if $S$ is a closed surface of negative Euler characteristic, $j: \pi_1(S) \to \PSL(2,\R)$ a discrete and faithful representation and $\rho: \pi_1(S) \to \PSL(2,\R)$ a representation with values in a $1$-parameter subgroup that is sufficiently close to the trivial representation, then the embedding
$$(j,\rho)(\pi_1(S)) = \{ (j(\gamma), \rho(\gamma)), \gamma \in \pi_1(S) \} \subset \PSL(2,\R) \times \PSL(2,\R)$$
acts properly discontinuously and cocompactly on $\PSL(2,\R)$. (This was generalized later by Kobayashi \cite{Kobayashi98}.) The quotient is nonstandard whenever the image of $\rho$ is not included in a compact subgroup.

Recall that a Lorentz metric on a manifold possesses a geodesic flow, and that the metric is called \emph{complete} if its geodesics run for all time. In dimension $3$, a complete AdS manifold is a quotient of $\tilde{\PSL(2,\R)}$ by a subgroup of $\tilde{\PSL(2,\R)} \times \tilde{\PSL(2,\R)}$ acting properly discontinuously on $\tilde{\PSL(2,\R)}$. In \cite{KulkarniRaymond85}, Kulkarni and Raymond addressed the question of describing all complete AdS $3$-manifolds.  Their first assertion is that complete compact AdS $3$-manifolds have \emph{finite level}, that is, they are actually quotients of a finite cover of $\PSL(2,\R)$. (A correct proof was given by Salein in his thesis \cite{SaleinThese}.) Therefore, closed AdS $3$-manifolds are cyclic coverings of a quotient of $\PSL(2,\R)$ by the action of a discrete subgroup of $\PSL(2,\R) \times \PSL(2,\R)$.
Kulkarni and Raymond then proved that all such quotients look like Goldman's examples in a certain sense.
\begin{CiteThm}[Kulkarni--Raymond, \cite{KulkarniRaymond85}]
Let $\Gamma$ be a discrete group acting properly discontinuously and cocompactly on $\PSL(2,\R)$ via a faithful representation $h: \Gamma \to \PSL(2,\R)\times \PSL(2,\R)$. Assume moreover that $\Gamma$ is torsion-free. Then $\Gamma$ is the fundamental group of a closed surface of negative Euler characteristic and
$$h = (j,\rho)~,$$
with either $j$ or $\rho$ discrete and faithful.
\end{CiteThm}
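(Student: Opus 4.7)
The strategy is to use properness of the $(j,\rho)$-action on $\PSL(2,\R)$ to force at least one of $j$ or $\rho$ to be a proper map from $\Gamma$ to $\PSL(2,\R)$; to upgrade this---using torsion-freeness---to a discrete faithful representation; and finally to combine with cocompactness to recognize its image as a uniform lattice in $\PSL(2,\R)$. Since a torsion-free cocompact lattice in $\PSL(2,\R)$ is the fundamental group of a closed oriented hyperbolic surface of genus at least two, this delivers both conclusions of the theorem at once.

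For the first step, the main tool is the Benoist--Kobayashi properness criterion, which in this setting reads: the action of $\Gamma$ on $(\PSL(2,\R)\times\PSL(2,\R))/\Delta \cong \PSL(2,\R)$ is proper if and only if $|\mu(j(\gamma))-\mu(\rho(\gamma))|\to\infty$ as $\gamma\to\infty$ in $\Gamma$, where $\mu(g)=d_{\H^2}(i,g\cdot i)$ is the Cartan projection of $\PSL(2,\R)$. Arguing by contradiction, I would suppose that neither $j$ nor $\rho$ is proper on $\Gamma$: this produces sequences $(\gamma_n),(\delta_n)\to\infty$ in $\Gamma$ with $\mu(j(\gamma_n))$ and $\mu(\rho(\delta_n))$ bounded, hence $\mu(\rho(\gamma_n)),\mu(j(\delta_n))\to\infty$ by the properness criterion. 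Using the subadditivity $|\mu(gh)-\mu(h)|\leq\mu(g)$, I would then pass to subsequences so that $\mu(j(\delta_n))\approx\mu(\rho(\gamma_n))$; the products $\gamma_n\delta_n$ exit every finite subset of $\Gamma$ while keeping $|\mu(j(\gamma_n\delta_n))-\mu(\rho(\gamma_n\delta_n))|$ bounded, contradicting the criterion.

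The second step is immediate: after relabeling, one may assume $j$ is proper, so $\ker(j)$ is a finite subgroup of the torsion-free group $\Gamma$, hence trivial, and $j$ is a discrete faithful representation. For the third step, I would show $j(\Gamma)\backslash \H^2$ is compact. Proper cocompactness of the $(j,\rho)$-action gives, via Milnor--Schwarz, a quasi-isometry $\Gamma\to\PSL(2,\R)$; composing with the projection $\PSL(2,\R)\to\H^2$, whose $\PSO(2)$-fibers are compact, one obtains a quasi-isometry $\Gamma\to\H^2$. Since the $j$-orbit map $\gamma\mapsto j(\gamma)\cdot i$ is Lipschitz and proper (by Step~1), a comparison of these two maps $\Gamma\to\H^2$ forces $j(\Gamma)$ to have compact quotient on $\H^2$. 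A torsion-free cocompact Fuchsian group is then the fundamental group of a closed oriented hyperbolic surface of genus at least two, as required.

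The delicate point is the third step: the right $\rho$-action on $\PSL(2,\R)$ does not descend to the base $\H^2$ of the $\PSO(2)$-fibration, so transferring cocompactness of the total $(j,\rho)$-action to cocompactness of $j(\Gamma)$ alone requires ruling out cusps and unbounded fundamental domains for $j(\Gamma)$. This is the technical heart of the argument---and precisely the subtlety which the introduction mentions as the point where Salein's thesis supplied the correction missing from the original Kulkarni--Raymond proof.
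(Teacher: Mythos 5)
First, a point of reference: the paper does not prove this statement. It is quoted as a theorem of Kulkarni--Raymond with a citation to \cite{KulkarniRaymond85} (Salein's thesis is credited only with correcting the separate ``finite level'' assertion), so there is no in-paper argument to compare yours against; I can only assess the proposal on its own terms. Your architecture (properness criterion $\Rightarrow$ one projection is a proper map $\Rightarrow$ discrete and faithful $\Rightarrow$ cocompact Fuchsian) is a legitimate modern route, but two of the three steps have genuine gaps. In Step 1, the reduction to ``neither $j$ nor $\rho$ is proper'' and the estimate $\mu(j(\gamma_n\delta_m))-\mu(\rho(\gamma_n\delta_m))=\mu(j(\delta_m))-\mu(\rho(\gamma_n))+O(1)$ are fine, but the pivotal move --- ``pass to subsequences so that $\mu(j(\delta_n))\approx\mu(\rho(\gamma_n))$'' --- is unjustified: two sequences tending to $+\infty$ need not contain terms within bounded distance of one another (compare values $2^{2^n}$ and $3^{3^n}$), so you cannot in general manufacture infinitely many products on which the Benoist--Kobayashi quantity stays bounded. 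The standard repair is a connectedness argument rather than subsequence matching: $\Gamma$ is finitely generated, the function $c(\gamma)=\mu(j(\gamma))-\mu(\rho(\gamma))$ changes by a uniformly bounded amount along each edge of the Cayley graph, $|c|\to\infty$ by properness, and $\Gamma$ is one-ended (it is quasi-isometric to $\PSL(2,\R)\cong\R^2\times S^1$ by Milnor--Schwarz); a discrete intermediate-value argument then shows $c$ has constant sign outside a finite ball, whence one of $\mu\circ j$, $\mu\circ\rho$ dominates $|c|$ and is proper. One-endedness, which your write-up never invokes, is the missing ingredient.

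Step 2 is correct. In Step 3, the assertion that ``a comparison of these two maps forces $j(\Gamma)$ to have compact quotient'' is precisely the statement to be proved, and you concede as much by calling it the technical heart without supplying an argument; as written the step would not survive refereeing. It can in fact be bypassed entirely: once $j$ is discrete and faithful, $j(\Gamma)\backslash\H^2$ is a complete hyperbolic surface with fundamental group $\Gamma$, and a torsion-free discrete subgroup of $\PSL(2,\R)$ is either free or a closed surface group; freeness is excluded because $\Gamma$ is one-ended, so $\Gamma$ is a closed surface group of negative Euler characteristic, and cocompactness of $j(\Gamma)$ is then automatic (an open surface has free fundamental group). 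Finally, your closing attribution is off: the paper credits Salein with correcting the ``finite level'' claim about quotients of covers of $\PSL(2,\R)$, not with repairing a cocompactness transfer in the dichotomy, and the original Kulkarni--Raymond proof is in any case a topological one (Seifert fibrations and cohomological dimension), not the Lipschitz/Cartan-projection route you follow.
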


\begin{rmk}
This theorem was generalized by Kobayashi \cite{Kobayashi93} and Kassel \cite{Kassel08} to quotients of any rank $1$ semisimple Lie group $G$ under the action of some subgroup of $G\times G$ by left and right translations.
\end{rmk}

\begin{rmk}
Contrary to the Riemannian setting, a Lorentz metric on a closed manifold may not be complete. However, Klingler proved later \cite{Klingler96}, generalizing Carri\`ere's theorem \cite{Carriere89}, that closed Lorentz manifolds of constant curvature are complete. Hence the completeness assumption in Kulkarni--Raymond's work can be removed.
\end{rmk}

Recall that, as an application of Selberg's lemma \cite{Selberg89}, if there exists a faithful representation of $\Gamma$ into $\PSL(2,\R) \times \PSL(2,\R)$, then $\Gamma$ admits a torsion-free finite index subgroup. Therefore, Kulkarni--Raymond's theorem describes all closed AdS $3$-manifolds up to a finite cover. A consequence of their theorem is that any closed AdS $3$-manifold is finitely covered by a circle bundle over a surface, and is itself a Seifert fiber space.

We now fix a closed oriented surface $S$ of negative Euler characteristic $\chi(S)$. A given pair $(j,\rho)$ of representations of $\pi_1(S)$ into $\PSL(2,\R)$ does not necessarily induce a proper action of $\pi_1(S)$ on $\PSL(2,\R)$. For instance, if $\rho(\gamma)$ is conjugate to $j(\gamma)$ for some $\gamma \neq \1$, then the infinite subgroup generated by $\gamma$ acts via $(j,\rho)$ with a fixed point. A pair $(j,\rho)$ is called \emph{admissible} if $(j,\rho)(\pi_1(S))$ acts properly discontinuously and cocompactly on $\PSL(2,\R)$. 

Let $(j,\rho)$ be an admissible pair. One easily checks that $(\rho,j)$ is also admissible. If $\sigma$ denotes the outer automorphism of $\PSL(2,\R)$ given by conjugation by 
\[ \left(\begin{array}{cc}
1 & 0 \\
0 & -1 \end{array}
\right)~,\]
 then $(\sigma \circ j, \sigma \circ \rho)$ is also admissible. Finally, for any $g_1, g_2 \in \PSL(2,\R)$, the pair \mbox{$(C_{g_1} \circ j, C_{g_2} \circ \rho)$} is still admissible (where $C_{g_i}$ denotes the conjugation by $g_i$ on $\PSL(2,\R)$). We will denote by $\Adm(S)$ the quotient of the space of admissible pairs of representations of $\pi_1(S)$ by these transformations. The main objective of this article is to describe the topology of $\Adm(S)$ and, in particular, classify its connected components. This will answer one of the many interesting questions of the recent survey: \emph{Some open questions in anti-de Sitter geometry} (\cite{QuestionsAdS}, section 2.3).

Recall that, by the work of Goldman \cite{GoldmanThese}, connected components of the space of representations of $\pi_1(S)$ into $\PSL(2,\R)$ are classified by an integer called the \emph{Euler class}. It can take any value between $\chi(S)$ and $-\chi(S)$ and is extremal (i.e.\ equal to $\pm \chi(S)$) if and only if the representation is \emph{Fuchsian}, that is, discrete and faithful. The postcomposition with the outer automorphism $\sigma$ exchanges the components of extremal Euler class. Therefore, up to switching the factors and postcomposing with $\sigma$, we can assume that any admissible pair $(j,\rho)$ has $j$ Fuchsian of Euler class $-\chi(S)$. From now on, we will consider $\Adm(S)$ as the space of admissible pairs $(j,\rho)$ with $j$ of Euler class $-\chi(S)$ modulo the action of $\PSL(2,\R) \times \PSL(2,\R)$ by conjugation.

If $j$ is Fuchsian and $\rho$ takes values into a compact subgroup of $\PSL(2,\R)$, then the pair $(j,\rho)$ is clearly admissible, and the quotient is standard. According to \cite{Goldman85}, there are some non-standard admissible pairs  with $\rho$ in the connected component of the trivial representation. Kulkarni and Raymond believed that all non-standard structures could be obtained by deforming standard ones. It was proven wrong by Salein.

Let us denote by $\H^2$ the Poincar\'e half-plane and by $g_P$ the Poincar\'e metric on $\H^2$, of curvature~$-1$. The group $\PSL(2,\R)$ acts on $\H^2$ by homographies and identifies with the group of orientation preserving isometries $\Isom^+(\H^2,g_P)$.
Salein noticed \cite{Salein00} that a sufficient condition for a pair $(j,\rho)$ to be admissible is that $j$ \emph{stricty dominates} $\rho$, in the sense that there exists a $(j,\rho)$-equivariant map from $(\H^2,g_P)$ to $(\H^2,g_P)$ which is $\lambda$-Lipschitz for some $\lambda < 1$. As a consequence, he constructed admissible pairs $(j,\rho)$ with $\rho$ of any non-extremal Euler class. In particular, those admissible pairs cannot be continuously deformed into standard ones. 
Lastly, Kassel proved in~\cite{KasselThese} that Salein's sufficient condition for admissibility is also necessary:

\begin{CiteThm}[Kassel]
Let $S$ be a closed surface of negative Euler characteristic and $j,\rho$ two representations of $\pi_1(S)$ into $\PSL(2,\R)$, with $j$ Fuchsian. Then the pair $(j,\rho)$ is admissible if an only if there exists a $(j,\rho)$-equivariant map from $(\H^2,g_P)$ to $(\H^2,g_P)$ that is $\lambda$-Lipschitz for some $\lambda <1$.
\end{CiteThm}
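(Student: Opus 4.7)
The ``if'' direction is due to Salein. Given a $(j,\rho)$-equivariant $\lambda$-Lipschitz map $f : (\H^2, g_P) \to (\H^2, g_P)$ with $\lambda < 1$, one extracts the translation length inequality $\ell(\rho(\gamma)) \leq \lambda \, \ell(j(\gamma))$ for every $\gamma \in \pi_1(S)$. The Benoist--Kobayashi properness criterion in rank one asserts that $(j,\rho)(\pi_1(S))$ acts properly on $\PSL(2,\R)$ if and only if $|\mu(j(\gamma)) - \mu(\rho(\gamma))| \to \infty$ as $\gamma \to \infty$, where $\mu$ denotes the Cartan projection; via the Milnor--\v{S}varc lemma, since $j$ is Fuchsian this reduces to the translation-length version, and properness follows from $\ell(j(\gamma)) - \ell(\rho(\gamma)) \geq (1-\lambda) \ell(j(\gamma)) \to \infty$. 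Cocompactness is then obtained by building a compact fundamental domain from a $(j,\rho)$-equivariant section $\H^2 \to \PSL(2,\R)$ whose construction uses the graph of $f$.

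For the converse, assume $(j,\rho)$ is admissible. The plan is to factor the argument through two intermediate statements. Step (i): admissibility implies the \emph{uniform length domination}
\[
\sup_{\gamma \neq 1} \frac{\ell(\rho(\gamma))}{\ell(j(\gamma))} = \lambda < 1.
\]
Step (ii): uniform length domination yields a $(j,\rho)$-equivariant Lipschitz map with constant strictly less than $1$. For (i), properness combined with Benoist--Kobayashi gives $|\ell(j(\gamma)) - \ell(\rho(\gamma))| \to \infty$. A topological argument using that $j$ has maximal Euler class (so that the sign of $\ell(j(\gamma)) - \ell(\rho(\gamma))$ cannot flip infinitely often without breaking cocompactness of the twisted action) shows that $\ell(j(\gamma)) > \ell(\rho(\gamma))$ for all $\gamma \neq 1$. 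To upgrade this into a uniform multiplicative gap one uses cocompactness quantitatively: a compact fundamental domain in $\PSL(2,\R)$ forces linear control $\mu(j(\gamma)) - \mu(\rho(\gamma)) \geq c \mu(j(\gamma))$ for some $c > 0$, via an argument comparing orbit displacement rates across translates of the fundamental domain.

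For step (ii), one constructs $f : \H^2 \to \H^2$ by an explicit piecewise-geodesic recipe: pick a geodesic triangulation of $j(\pi_1(S)) \backslash \H^2$, define $f$ on a fundamental set of vertices so that the induced map on the orbit $j(\pi_1(S)) \cdot p$ has Lipschitz constant $\lambda' \in (\lambda, 1)$ (this is possible thanks to (i) by a barycentric/averaging procedure), and extend equivariantly using a Kirszbraun--Valentine-type extension theorem for $\mathrm{CAT}(-1)$ targets. Equivalently, one invokes Thurston's theorem --- extended from the Fuchsian-target case to arbitrary $\rho$ --- which identifies the infimal Lipschitz constant of an equivariant map with the supremum of length-spectrum ratios.

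The main obstacle is step (i). Properness alone is compatible with $\ell(\rho(\gamma_n))/\ell(j(\gamma_n)) \to 1$ (for instance $\ell(j(\gamma_n)) = n^2$ and $\ell(\rho(\gamma_n)) = n^2 - n$ still yields an unbounded additive gap), so cocompactness must enter essentially to rule out this type of degeneration. Promoting the asymptotic gap into a uniform linear estimate on Cartan projections, via the existence of a compact fundamental domain on $\PSL(2,\R)$, is the technical heart of the argument and the only part where the cocompact hypothesis --- as opposed to the weaker proper hypothesis --- is truly used.
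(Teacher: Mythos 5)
First, a point of reference: the paper does not prove this statement. It is quoted from Kassel's thesis \cite{KasselThese} as a black box, so there is no proof in the text to compare yours against; I can only assess your argument on its own terms.

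For the ``if'' direction your outline is essentially Salein's, but you conflate two different quantities. The Benoist--Kobayashi properness criterion for the action $(g_1,g_2)\cdot x=g_1xg_2^{-1}$ is stated in terms of the Cartan projection $\mu(g)=d(o,g\cdot o)$, not the translation length $l(g)=\inf_x d(x,g\cdot x)$, and these are not comparable: for $\gamma=\alpha^n\beta\alpha^{-n}$ one has $l(j(\gamma))=l(j(\beta))$ bounded while $\mu(j(\gamma))\to\infty$. The Milnor--\v{S}varc lemma identifies $\mu(j(\gamma))$ with the word length of $\gamma$, not with $l(j(\gamma))$, so the claimed ``reduction to the translation-length version'' is false. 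This direction is nonetheless repairable, since the Lipschitz map gives the correct estimate directly on Cartan projections, namely $\mu(\rho(\gamma))\le\lambda\,\mu(j(\gamma))+C$, whence $\mu(j(\gamma))-\mu(\rho(\gamma))\to\infty$.

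The converse is where the genuine gap lies. Your step (i) opens with ``properness gives $|l(j(\gamma))-l(\rho(\gamma))|\to\infty$,'' which is false for the same reason (conjugates again). More importantly, the entire content of Kassel's theorem is concentrated in your one sentence asserting that a compact fundamental domain ``forces'' the uniform linear gap $\mu(j(\gamma))-\mu(\rho(\gamma))\ge c\,\mu(j(\gamma))$. You correctly identify that properness alone only yields divergence of the additive gap and is compatible with the ratio tending to $1$, and that cocompactness must be used to exclude this degeneration --- but ``comparing orbit displacement rates across translates of the fundamental domain'' is not an argument: it does not explain why a sublinear additive gap would contradict the existence of a compact fundamental domain. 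This is precisely the ``properly discontinuous and cocompact implies sharp'' statement whose proof (via a volume/coarse counting argument, or an analysis of the maximally stretched locus) is the substance of Kassel's result. Step (ii) is legitimate only if you invoke the Gu\'eritaud--Kassel theorem quoted in the paper (equivalence of strict length-spectrum domination and strict Lipschitz domination), which the paper itself flags as highly non-trivial; it is not a routine Kirszbraun--Valentine extension from an orbit. In sum, your proposal correctly identifies the architecture of the proof but replaces its key step by an assertion, and misidentifies translation lengths with Cartan projections along the way.
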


\subsection{Dominated representations}
Kassel's criterion for admissibility raises many questions that may turn out to be interesting beyond the scope of $3$-dimensional AdS geometry. Indeed, one can extend the definition of domination to a more general setting. Consider a closed surface $S$ of negative Euler characteristic, and a contractible Riemannian manifold $(M,g_M)$. Let $j: \pi_1(S) \to \PSL(2,\R)$ be a Fuchsian representation, and $\rho$ a representation of $\pi_1(S)$ into $\Isom(M,g_M)$. Since $M$ is contractible, there always exists a smooth $(j,\rho)$-equivariant map from $\H^2$ to $M$. Since $S$ is compact, this map is Lipschitz. We can thus define the \emph{minimal Lipschitz constant}
\[\Lip(j,\rho) = \inf \left\lbrace \lambda \in \R_+ \mid \exists f: \H^2 \to M \textrm{ $(j,\rho)$-equivariant and $\lambda$-lispchitz} \right\rbrace~.\]
We then say that $j$ strictly dominates $\rho$ if $\Lip(j,\rho) < 1$.

\subsubsection{Length spectrum} \label{ss:LengthSpectrum}
A related notion of domination comes from the comparison of the length spetra of $j$ and $\rho$. Recall that if $g$ is an isometry of a metric space $(M,d)$, the \emph{translation length} of $g$ is the number
$$l(g) := \inf_{x\in M} d(x,g\cdot x)~.$$
The \emph{length spectrum} of a representation $\rho: \pi_1(S) \to \Isom(M)$ is the function that associates to $\gamma \in \pi_1(S)$ the translation length of $\rho(\gamma)$. We say that the length spectrum of $j$ strictly dominates the length spectrum of $\rho$ if there exists a constant $\lambda < 1$ such that for all $\gamma \in \pi_1(S)$,
$$l(\rho(\gamma)) \leq \lambda\, l(j(\gamma))~.$$
It is straightforward that if a Fuchsian representation $j$ strictly dominates $\rho$, then the length spectrum of $j$ strictly dominates the length spectrum of $\rho$. The converse is highly non-trivial, but happens to be true in the cases we are interested in, where the target space is a smooth complete simply connected Riemannian manifold of sectional curvature bounded above by $-1$ (what we will call for short a \emph{Riemannian $\CAT(-1)$ space}).

\begin{CiteThm}[Gu\'eritaud--Kassel, \cite{GueritaudKassel}]
Let $S$ be a closed oriented surface of negative Euler characteristic, $j$ a Fuchsian representation of $\pi_1(S)$ into $\PSL(2,\R)$, and $\rho$ a representation of $\pi_1(S)$ into the isometry group of a Riemannian $\CAT(-1)$ space. Then $j$ strictly dominates $\rho$ if and only if the length spectrum of $j$ strictly dominates the length spectrum of $\rho$.
\end{CiteThm}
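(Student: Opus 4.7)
The first implication is immediate: if $f:\H^2 \to M$ is $(j,\rho)$-equivariant and $\lambda$-Lipschitz, then for every $\gamma \in \pi_1(S)$ and every $x$ on the translation axis of $j(\gamma)$, equivariance gives
\[ d_M\bigl(f(x),\rho(\gamma)\cdot f(x)\bigr) \;=\; d_M\bigl(f(x),f(j(\gamma)\cdot x)\bigr) \;\leq\; \lambda\, l(j(\gamma)), \]
so $l(\rho(\gamma)) \leq \lambda\, l(j(\gamma))$. For the converse, let $C(j,\rho) := \inf\{\lambda : \exists\ (j,\rho)\text{-equivariant $\lambda$-Lipschitz map}\}$ and $K(j,\rho) := \sup_{\gamma \neq 1} l(\rho(\gamma))/l(j(\gamma))$. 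We always have $K \leq C$; the plan is to prove the reverse inequality $C \leq K$, so that $K < 1$ forces $C < 1$.

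The first step is to produce an extremal $(j,\rho)$-equivariant $C$-Lipschitz map $F$. Taking a minimizing sequence $f_n$ of $\mu_n$-Lipschitz equivariant maps with $\mu_n \searrow C$, cocompactness of the $j$-action on $\H^2$ together with Arzel\`a--Ascoli allows one to extract a subsequence converging uniformly on compact sets to a $C$-Lipschitz equivariant map $F$ (after normalizing at a basepoint). Next, I introduce the \emph{maximally stretched locus} $E(F) \subset \H^2$, defined as the set of points $x$ where no neighborhood of $x$ admits a perturbation of $F$ with strictly smaller local Lipschitz constant; equivalently, points where the infinitesimal stretch of $F$ equals $C$ in some direction.

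The technical heart is to show that $E(F)$ descends to a \emph{geodesic lamination} $\Lambda \subset S$, along whose leaves $F$ is a geodesic-to-geodesic map of constant dilation~$C$. This is where the $\CAT(-1)$ hypothesis on $M$ is used essentially: strict convexity of the distance function in $M$ implies that if $F$ realizes stretch $C$ along two directions at a point, those directions must coincide, so the stretch has a well-defined direction field; a further convexity argument shows the integral curves of this field are $\H^2$-geodesics mapped to $M$-geodesics (scaled by $C$); and the extremality of $F$ (no local improvement possible) forces the stretch locus to be $j$-invariant and without transverse self-intersection, i.e.\ a lamination. This portion will be the main obstacle: carrying out the infinitesimal stretch analysis rigorously, especially because $F$ need not be smooth and $M$ is only assumed Riemannian $\CAT(-1)$ (so one must work with comparison triangles rather than curvature tensors).

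Finally, granting the lamination $\Lambda$, one concludes $C \leq K$ as follows. If $\Lambda$ contains a closed leaf $c$, then $c$ corresponds to a $\gamma \in \pi_1(S)$ with $F$ mapping the axis of $j(\gamma)$ onto the axis of $\rho(\gamma)$ with dilation $C$, giving $l(\rho(\gamma)) = C\, l(j(\gamma))$ and hence $K \geq C$. In general, $\Lambda$ carries a transverse measure (inherited from an accumulation of the stretch), and by Thurston's density of weighted simple closed curves in the space of measured laminations together with continuity of length functions, one approximates $\Lambda$ by simple closed curves $\gamma_n$ whose $j$- and $\rho$-length ratios converge to $C$, yielding $K \geq C$ and finishing the proof. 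The approximation step is classical, but matching it with the limiting stretch data produced in the lamination construction requires care.
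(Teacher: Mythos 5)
First, a point of comparison: the paper does not prove this theorem. It is quoted from Gu\'eritaud--Kassel, and Remark \ref{rmk:GK} only records that their proof goes through for a general Riemannian $\CAT(-1)$ target because its key ingredient is an equivariant Kirszbraun--Valentine extension theorem. Your outline instead reproduces the stretch-lamination strategy of that reference. Your easy direction is correct. But the converse, as you have structured it, contains a genuine gap. You propose to prove the unconditional inequality $C\leq K$ by showing that the maximally stretched locus $E(F)$ of an optimal map is a geodesic lamination. That claim is false precisely in the regime you are aiming for, namely $C<1$: take $M=(\H^2,\lambda^2 g_P)$ with $0<\lambda<1$ (curvature $-1/\lambda^2\leq -1$) and $\rho=j$; the identity is the optimal equivariant map, $C=\lambda<1$, and the stretch locus is all of $\H^2$, not a lamination. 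The strict-convexity arguments that collapse the stretch set onto a union of geodesics genuinely require $C\geq 1$. The correct architecture is contrapositive: prove that \emph{if} $C\geq 1$ then $E(F)$ is a nonempty lamination along which closed curves certify $K\geq C\geq 1$; hence $K<1$ forces $C<1$. As written, ``prove $C\leq K$ and conclude'' asks for more than your method can deliver.

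A second gap is the existence of the extremal map $F$. You cannot ``normalize at a basepoint'': post-composing the minimizing sequence $f_n$ by isometries destroys $(j,\rho)$-equivariance and replaces $\rho$ by varying conjugates whose limit need not be conjugate to $\rho$. When $\rho$ fixes a point of $\partial_\infty M$ (a \emph{parabolic} representation in the paper's terminology), the minimizing sequence really does escape to infinity and the infimum $C$ is not attained --- e.g.\ for $\rho$ unipotent in $\PSL(2,\R)$ one has $C=0$ while no equivariant map is $0$-Lipschitz. The entire appendix of the paper is devoted to this degeneration; your argument must either exclude parabolic $\rho$ and treat that case separately (it reduces to $(j,m)$-equivariant maps to $\R$ via Busemann functions, as in Lemmas \ref{l:CaracterisationsRepresentationsReductibles} and \ref{l:ConstanteLipschitzRepReductible}), or bypass extremal maps entirely, as the Kirszbraun--Valentine route does. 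Finally, in your last step, ``continuity of length functions'' for $\gamma\mapsto l(\rho(\gamma))$ on measured laminations is not an off-the-shelf fact for a general $\CAT(-1)$ target; Gu\'eritaud--Kassel avoid it by estimating $l(\rho(\gamma_n))$ directly for closed geodesics $\gamma_n$ that fellow-travel the leaves on which $F$ stretches by exactly $C$. These are repairable, but each one requires a real argument that is currently missing.
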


\begin{rmk} \label{rmk:GK}
In \cite{GueritaudKassel}, Gu\'eritaud and Kassel study the minimal Lipschitz constant $\Lip(j,\rho)$ and its relation with the comparison of the length spectra of $j$ and $\rho$ in a broader context. Namely, they consider $j$ a discrete and faithful representation of the fundamental group of a geometrically finite hyperbolic $n$-manifold into $\Isom(\H^n)$. On the other side, they also assume that $\rho$ takes values in the isometry group of $\H^n$. However, as Fran\c cois Gu\'eritaud pointed out to us, their proof would work when replacing $\H^n$ by any Riemannian $\CAT(-1)$ space, since it relies on an equivariant Kirzbraun--Valentine theorem than mainly requires the target space to be ``more negatively curved'' than the base.
\end{rmk}

Given a representation $\rho: \pi_1(S) \to \Isom(M)$, it is natural to ask whether $\rho$ can be dominated by a Fuchsian representation and, if so, what the domain of Fuchsian representations dominating $\rho$ looks like. The first question was answered by Deroin and the author in~\cite{DeroinTholozan} when $(M,g_M)$ is a Riemannian $\CAT(-1)$ space. This applies for instance when $M$ is the symmetric space of a simple Lie group of real rank $1$ (with a suitable normalization of the metric), and in particular for representations in $\PSL(2,\R)$. In that case, it was obtained independently and with other methods by Gu\'eritaud--Kassel--Wolff \cite{GKW}.

\begin{CiteThm}[Deroin--Tholozan]\label{t:Bertrand}
Let $S$ be a surface of negative Euler characteristic, $(M,g_M)$ a Riemannian $\CAT(-1)$ space and $\rho$ a representation of $\pi_1(S)$ into $\Isom(M)$. Then either $\rho$ is Fuchsian in restriction to some stable totally geodesic $2$-plane of curvature $-1$ embedded in $M$, or there exists a Fuchsian representation that strictly dominates $\rho$.
\end{CiteThm}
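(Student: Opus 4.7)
The plan is to construct a Fuchsian representation $j$ strictly dominating $\rho$ via an Ahlfors--Schwarz-type argument applied to an equivariant harmonic map.

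Fix any auxiliary Fuchsian representation $j_0$. Since $M$ is Riemannian $\CAT(-1)$, in particular simply connected of non-positive curvature, Corlette--Labourie produces a unique $(j_0, \rho)$-equivariant harmonic map $f : (\H^2, g_{j_0}) \to (M, g_M)$, once one disposes separately of the elementary cases (where $\rho$ stabilizes a point, a geodesic, or a point at infinity) by direct construction. Let $\mu := \|df\|_{\mathrm{op}}$ denote the largest singular value of $df$ with respect to $g_{j_0}$ and $g_M$; it is the local Lipschitz constant of $f$. Sampson's Bochner formula, combined with the hypothesis $K_M \leq -1$ and the harmonicity of $f$, yields a subharmonicity inequality of the shape
$$\Delta_{g_{j_0}} \log \mu^2 \;\geq\; 2(\mu^2 - 1)$$
wherever $\mu > 0$. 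Its geometric content is that the auxiliary conformal metric $\mu^2\, g_{j_0}$ has Gauss curvature at most $-1$ on its non-degenerate locus.

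Next I would uniformize. Since the Hopf differential of $f$ is holomorphic on $(\H^2, g_{j_0})$, the zero set of $\mu$ is discrete on $S$, and a version of the Uniformization theorem for conformal metrics with isolated cone-type singularities produces a unique hyperbolic metric $g_j$ in the conformal class of $\mu^2\, g_{j_0}$; let $j$ denote the associated Fuchsian representation. The Ahlfors--Schwarz lemma for conformal metrics with curvature $\leq -1$, applied in this singular setting, yields $\mu^2\, g_{j_0} \leq g_j$ pointwise on $S$. Unwinding the definition of $\mu$, this is precisely the assertion that $f : (\H^2, g_j) \to (M, g_M)$ is $1$-Lipschitz, so $\Lip(j, \rho) \leq 1$.

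Finally, the strong maximum principle applied to the Bochner inequality forces a dichotomy. Either $\mu^2\, g_{j_0} < g_j$ strictly everywhere on $S$, in which case compactness of $S$ gives $\mu^2\, g_{j_0} \leq \lambda^2\, g_j$ for some $\lambda < 1$, yielding $\Lip(j, \rho) < 1$ and the desired strict domination; or $\mu^2\, g_{j_0} \equiv g_j$ throughout $S$, forcing equality in both Bochner and Ahlfors--Schwarz. Tracing the equality cases back shows that the Hopf differential of $f$ vanishes identically, so $f$ is conformal, and in fact an isometric totally geodesic embedding of $\H^2$ onto a $2$-plane $P \subset M$ of constant curvature $-1$; the representation $\rho$ then preserves $P$ and restricts on it to a Fuchsian representation (conjugate to $j_0$), which is the exceptional alternative. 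The main obstacle will be handling the singularities of $\mu^2\, g_{j_0}$ cleanly: one must verify Uniformization and Ahlfors--Schwarz across the discrete zero locus of $\mu$, using the holomorphicity of the Hopf differential to control the local behaviour of $\mu$ near its zeros, and ensure that the equality case of the maximum principle propagates the rigid geometric conclusion globally rather than just pointwise.
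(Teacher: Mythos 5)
Your strategy (equivariant harmonic map, Bochner formula, Ahlfors--Schwarz) belongs to the right family of ideas, but as written the argument is circular and its key differential inequality is false. The metric $\mu^2 g_{j_0}$ is conformal to $g_{j_0}$, and the isolated zeros of $\mu$ are removable for the conformal structure of the closed surface; hence the unique hyperbolic metric in its conformal class is $g_{j_0}$ itself, your $j$ is the auxiliary $j_0$ you started with, and your conclusion would be that an \emph{arbitrary} Fuchsian $j_0$ satisfies $\Lip(j_0,\rho)\le 1$ for every $\rho$. This is absurd: take $\rho$ Fuchsian with strictly larger length spectrum than $j_0$ (or recall the remark in the introduction that a Fuchsian representation cannot be strictly dominated). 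The error sits in the Bochner step. The Sampson--Schoen--Yau formula controls the \emph{holomorphic} energy density $H=\|\partial f\|^2$, namely $\Delta_{g_{j_0}}\log H=-2-2K_M(H-L)$ with $L=\|\bar\partial f\|^2$; for the top singular value $\mu^2=(\sqrt H+\sqrt L)^2$ one computes (already for $M=\H^2$) that at any critical point of $H/L$ the curvature of $\mu^2 g_{j_0}$ equals $-\bigl(\tfrac{\sqrt H-\sqrt L}{\sqrt H+\sqrt L}\bigr)^2$, which is strictly greater than $-1$ wherever $L>0$, i.e.\ on the wrong side of $-1$ for Ahlfors--Schwarz. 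Your claimed inequality $\Delta\log\mu^2\ge 2(\mu^2-1)$ therefore fails for any non-conformal harmonic map, and no conformal rescaling of the fixed domain metric can produce the dominating representation.

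The actual Deroin--Tholozan argument (whose mechanism is recalled in section \ref{ss:TheMapPsi}) obtains the new hyperbolic structure by changing the \emph{target}, not the domain: one takes the Hopf differential $\Phi=\PPhi(X,\rho)$ of the $\rho$-equivariant harmonic map $f$ and uses Sampson--Hitchin--Wolf to produce the unique Fuchsian $j=\PPsi_\rho(X)$ whose harmonic diffeomorphism $h:\tilde{X}\to\H^2$ from the \emph{same} Riemann surface has Hopf differential $\Phi$. Since $h^*g_P=e(h)g_0+\Phi+\bar{\Phi}$ and $f^*g_M=e(f)g_0+\Phi+\bar{\Phi}$ share their $(2,0)$-part, they are simultaneously diagonalizable with eigenvalues $e\pm 2\norm{\Phi}_{g_0}$, and strict domination reduces to the pointwise inequality $e(f)<e(h)$; this is exactly what the maximum principle applied to the difference of the two Bochner equations yields under $K_M\le-1$, with the equality case forcing $\Phi$-rigidity and hence $f$ to be an isometry onto a stable totally geodesic plane of curvature $-1$. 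Your dichotomy at the end is structured correctly, but it must be run on the comparison between $f^*g_M$ and $h^*g_P$ rather than on a conformal multiple of $g_{j_0}$.
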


\begin{rmk}
Note that, by a simple volume argument (see \cite{Thurston86}, proposition 2.1), a Fuchsian representation cannot be strictly dominated. Thus the theorem is optimal.
\end{rmk}

\subsection{Topology of $\Adm(S)$}
Let $(M,g_M)$ be a Riemannian $\CAT(-1)$ space. Consider a Fuchsian representation $j: \pi_1(S) \to \PSL(2,\R)$ and a representation $\rho : \pi_1(S) \to \Isom(M)$. One easily checks that $\Lip(j,\rho)$ only depends on the class of $j$ under conjugation by $\PSL(2,\R)$ and on the class of $\rho$ under conjugation by $\Isom(M)$. We can thus see $\Lip$ as a function defined on \[\Teich(S) \times \Rep(S,\Isom(M))~,\]
where $\Rep(S,\Isom(M))$ denotes the space of conjugacy classes of representations of $\pi_1(S)$ into $\Isom(M)$, and $\Teich(S)$ the space of classes of representations of Euler class $-\chi(S)$ modulo conjugation, which is well-known to be isomorphic to the \emph{Teichm\"uller space} of $S$ (see section \ref{ss:Teichmuller}).

\begin{prop} \label{p:ContinuityLip}
Let $M$ be a Riemannian $\CAT(-1)$ space. Then the function 
\[\Lip: \Teich(S) \times \Rep(S,\Isom(M)) \to \R_+\]
is continuous.
\end{prop}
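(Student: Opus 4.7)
The plan is to establish upper and lower semi-continuity of $\Lip$ separately, both relying on direct constructions with equivariant maps: a barycenter interpolation for one, and Arzel\`a--Ascoli for the other.

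\smallskip

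\noindent\textbf{Upper semi-continuity.} Fix $(j_0, \rho_0)$ and $\epsilon > 0$, and choose a $(j_0, \rho_0)$-equivariant map $f_0 : \H^2 \to M$ with $\Lip(f_0) \leq \Lip(j_0, \rho_0) + \epsilon$. Since $M$ is a Hadamard manifold (being a complete simply connected Riemannian manifold with curvature $\leq -1$), Cartan's center-of-mass construction is available. Pick a smooth compactly supported bump $\phi : \H^2 \to \R_{\geq 0}$ with $\sum_\gamma \phi(j_0(\gamma)^{-1} x) > 0$ for all $x$. For $j$ close to $j_0$, the sum $\Phi_j(x) := \sum_\gamma \phi(j(\gamma)^{-1} x)$ is finite, smooth, strictly positive and $j$-invariant, and setting $\phi_j := \phi/\Phi_j$ produces a $j$-equivariant partition of unity $\{\phi_j(j(\gamma)^{-1} x)\}_\gamma$ on $\H^2$. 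I would then set
\[
 f_{j,\rho}(x) \ := \ \mathrm{bar}\bigl\{\bigl(\phi_j(j(\gamma)^{-1} x),\ \rho(\gamma)f_0(j(\gamma)^{-1} x)\bigr)\bigr\}_{\gamma \in \pi_1(S)},
\]
which is $(j,\rho)$-equivariant by a short computation using isometry-equivariance of the barycenter (substituting $\gamma = \gamma_0 \gamma'$ in the sum). The key observation is that at $(j,\rho) = (j_0, \rho_0)$, the base points $\rho_0(\gamma) f_0(j_0(\gamma)^{-1} x) = f_0(x)$ all coincide, so $f_{j_0,\rho_0} = f_0$ and the derivative of the barycenter in the weights contributes only to higher order as $(j,\rho) \to (j_0, \rho_0)$. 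The derivative in the base points yields, by isometry-equivariance, contributions of norm bounded by $\Lip(f_0) + o(1)$. Together this gives $\Lip(f_{j,\rho}) \leq \Lip(f_0) + o(1)$, whence $\limsup \Lip(j,\rho) \leq \Lip(j_0, \rho_0) + \epsilon$; sending $\epsilon \to 0$ yields upper semi-continuity.

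\smallskip

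\noindent\textbf{Lower semi-continuity.} Suppose $(j_n, \rho_n) \to (j_0, \rho_0)$ with $\Lip(j_n, \rho_n) \to L$, and pick $(j_n, \rho_n)$-equivariant maps $f_n$ that are $(L + 1/n)$-Lipschitz. Since $\Lip$ is invariant under conjugation of $(j_n, \rho_n, f_n)$ by isometries of $\H^2$ and $M$, one may normalize so that $j_n \to j_0$ and $\rho_n \to \rho_0$ as homomorphisms, and so that $f_n(x_0) = p_0$ for a fixed choice of base points. Then $d(f_n(x), p_0) \leq (L+1/n)\, d(x, x_0)$, so $(f_n(x))$ is bounded in the Hadamard manifold $M$ (which is proper by Hopf--Rinow), and the family $(f_n)$ is equi-Lipschitz. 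Arzel\`a--Ascoli extracts a subsequence converging uniformly on compacta to a limit $f_\infty : \H^2 \to M$ that is $L$-Lipschitz. Passing to the limit in $f_n(j_n(\gamma) x) = \rho_n(\gamma) f_n(x)$ shows $f_\infty$ is $(j_0, \rho_0)$-equivariant, so $\Lip(j_0, \rho_0) \leq L$, giving lower semi-continuity.

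\smallskip

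\noindent\textbf{Main obstacle.} The delicate point is the Lipschitz estimate in the upper semi-continuity step: one must carefully quantify how the barycenter depends on its weights and base points when the base points are close but not equal, making crucial use of the fact that all $\rho(\gamma) f_0(j(\gamma)^{-1} x)$ converge to the single value $f_0(x)$ as $(j,\rho) \to (j_0, \rho_0)$. The rest of the argument is standard, but one also needs to remember that on any compact subset of $\H^2$ only finitely many $\gamma$ contribute to the sum, so the differentiation can be carried out term by term without subtlety.
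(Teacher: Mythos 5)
Your upper semi-continuity argument is sound and is a legitimate variant of what the paper does: the paper treats the two variables separately (continuity in $j$ via continuity of Thurston's asymmetric distance, upper semi-continuity in $\rho$ by viewing $\rho_t$ as the monodromy of a continuously varying flat connection on a fixed $M$-bundle and transporting a fixed section), whereas your barycenter interpolation handles both variables at once. The key point in both versions is the same -- the competitor map for $(j,\rho)$ near $(j_0,\rho_0)$ is a small perturbation of $f_0$, so its Lipschitz constant degrades only by $o(1)$ -- and your observation that only finitely many $\gamma$ contribute on a fundamental domain makes the estimate work.

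The lower semi-continuity argument, however, has a genuine gap, and it is precisely the point to which the paper devotes its entire appendix. You assert that ``one may normalize so that $j_n \to j_0$ and $\rho_n \to \rho_0$ as homomorphisms, and so that $f_n(x_0) = p_0$.'' These two normalizations are in general incompatible. Once representatives $\rho_n \to \rho_0$ are fixed, the only way to force $f_n(x_0) = p_0$ is to postcompose $f_n$ with an isometry $g_n$ of $M$ sending $f_n(x_0)$ to $p_0$; but (a) $M$ is an arbitrary Riemannian $\CAT(-1)$ space, not assumed homogeneous, so such a $g_n$ need not exist (the paper explicitly remarks that this step ``is immediate when $M$ is homogeneous''), and (b) even when it exists, replacing $f_n$ by $g_n \circ f_n$ replaces $\rho_n$ by $g_n \rho_n g_n^{-1}$, which need no longer converge to $\rho_0$, so the Arzel\`a--Ascoli limit is equivariant for some other representation $\rho'$ and you only get $\Lip(j_0,\rho') \leq L$, not $\Lip(j_0,\rho_0) \leq L$. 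The escape to infinity is not a removable artifact: it genuinely occurs exactly when $\rho_0$ fixes a point $p \in \partial_\infty M$ (the paper's Proposition \ref{p:ConditionAscoli} shows this is the \emph{only} way it can occur, which salvages the easy case). To close the gap in the remaining case one needs the structure theory of such ``parabolic'' representations: their length spectrum is $|m(\cdot)|$ for a morphism $m:\pi_1(S) \to \R$ (Lemma \ref{l:CaracterisationsRepresentationsReductibles}), the minimal Lipschitz constant depends only on $m$ via $\Lip(j,\rho)=\Lip(j,m)$ (Lemma \ref{l:ConstanteLipschitzRepReductible}), and one renormalizes $f_n$ by powers $\rho_n(\gamma)^{k_n}$ of a single hyperbolic element -- chosen so that the conjugated representations still have the right length spectrum in the limit -- before applying Ascoli and transferring the bound back to $\rho_0$ through the length spectrum. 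Without some version of this analysis your argument does not establish lower semi-continuity at parabolic limits.
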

This proposition is proved by Gu\'eritaud and Kassel in a slightly different context  \cite{GueritaudKassel}. For completeness, we include a proof in the appendix of this article.\\

Let us denote by $\Dom(\rho)$ the domain of $\Teich(S)$ consisting of representations of Euler class $-\chi(S)$ strictly dominating a given representation $\rho$, an by $\Dom(S, \Isom(M))$ the domain of pairs $(j,\rho) \in \Teich(S) \times \Rep(S, \Isom(M))$ such that $j$ strictly dominates $\rho$. By continuity of the function $\Lip$, $\Dom(\rho)$ and $\Dom(S,\Isom(M))$ are open domains. To prove their theorem, Deroin and the author consider in \cite{DeroinTholozan} a certain map $\PPsi_\rho: \Teich(S) \to \Teich(S)$ and show that its image lies in $\Dom(\rho)$. The map $\PPsi_\rho$ is constructed this way: given a point $X$ in $\Teich(S)$ one can associate to the pair $(X,\rho)$ a holomorphic quadratic differential $\PPhi(X,\rho)$ on $X$, obtained as the Hopf differential of an equivariant harmonic map (see section \ref{ss:HopfDiff}). By a theorem of Sampson--Hitchin--Wolf (see section \ref{ss:Teichmuller}) there is a unique Fuchsian representation $j$ up to conjugacy such that $\PPhi(X,j) = \PPhi(X,\rho)$. We set $\PPsi_\rho(X) = j$. (More details are given in section \ref{ss:TheMapPsi}.) Here, we prove that this map is a homeomorphism from $\Teich(S)$ to $\Dom(\rho)$ and that it varies continuously with $\rho$. We thus obtain the following:

\begin{BigThm} \label{t:MonThm}
Let $S$ be a closed oriented surface of negative Euler characteristic and $(M,g_M)$ a Riemannian $\CAT(-1)$ space. Then the domain $\Dom(S,\Isom(M))$ is homeomorphic to
\[\Teich(S) \times \Rep_{nf}(S,\Isom(M))~,\]
where $\Rep_{nf}(S,\Isom(M))$ denotes the domain of $\Rep(S,\Isom(M))$ consisting of representations that are not Fuchsian in restriction to some totally geodesic plane of curvature $-1$.

What's more, this homeomorphism is fiberwise, meaning that it restricts to a homeomorphism from
$\Teich(S)\times \{\rho\}$ to $\Dom(\rho)\times \{\rho\}$ for any $\rho \in \Rep_{nf}(S,\Isom(M,g_M))$.
\end{BigThm}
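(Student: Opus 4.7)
The plan is to prove that the map
\[ \Phi: \Teich(S) \times \Rep_{nf}(S, \Isom(M)) \to \Dom(S, \Isom(M)), \quad (X, \rho) \mapsto (\PPsi_\rho(X), \rho) \]
is a homeomorphism. Continuity of $\Phi$ will follow from continuous dependence of the equivariant harmonic map and its Hopf differential $\PPhi(X, \rho)$ on $X$ and $\rho$ (via Eells--Sampson existence together with nondegeneracy of the harmonic map equation for $\CAT(-1)$ targets), combined with continuity of the inverse Sampson--Hitchin--Wolf correspondence sending a holomorphic quadratic differential on $X$ to the Fuchsian representation realizing it as a Hopf differential. The heart of the proof is therefore to show that, for each fixed $\rho \in \Rep_{nf}(S, \Isom(M))$, the restriction $\PPsi_\rho: \Teich(S) \to \Dom(\rho)$ is bijective, with an inverse depending continuously on $\rho$.

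To construct an inverse variationally, I would introduce for each pair $(j, \rho)$ the functional
\[ F_{j, \rho}(X) := E(X, j) - E(X, \rho) \]
on $\Teich(S)$, where $E(X, \rho')$ denotes the minimal energy of a $\rho'$-equivariant map from $\tilde X$, realized by the harmonic map. By the classical Sampson--Wolf identity, the differential of $E(\cdot, \rho')$ on $\Teich(S)$ is represented by $-2\Re\,\PPhi(\cdot, \rho')$ viewed as a cotangent vector. Hence the critical points of $F_{j, \rho}$ are precisely those $X$ satisfying $\PPhi(X, j) = \PPhi(X, \rho)$, i.e. $\PPsi_\rho(X) = j$. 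When $j \in \Dom(\rho)$, pick a $(j, \rho)$-equivariant $\lambda$-Lipschitz map $u: \H^2 \to M$ with $\lambda < 1$. Composing $u$ with the $j$-equivariant harmonic map $\tilde X \to \H^2$ produces a $\rho$-equivariant map $\tilde X \to M$ of energy at most $\lambda^2 E(X, j)$, so $E(X, \rho) \leq \lambda^2 E(X, j)$ and therefore
\[ F_{j, \rho}(X) \geq (1 - \lambda^2)\, E(X, j). \]
Since $E(\cdot, j)$ is classically proper on $\Teich(S)$, $F_{j, \rho}$ is proper and bounded below, hence attains its infimum. This produces a critical point, whence surjectivity of $\PPsi_\rho$ onto $\Dom(\rho)$, and identifies a candidate inverse $j \mapsto \arg\min_X F_{j, \rho}$.

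For injectivity, the plan is to show that the Hessian of $F_{j, \rho}$ is positive-definite at every critical point. The Hessians of $E(\cdot, j)$ and $E(\cdot, \rho)$ on $\Teich(S)$ can be expressed via Jacobi-type second-variation operators on the respective harmonic maps, and the idea is that the pointwise Lipschitz bound, together with the assumption of sectional curvature $\leq -1$ on $M$, should force the difference of Hessians to be positive. Every critical point of the proper functional $F_{j, \rho}$ is then a strict local minimum, whence uniqueness, and $\PPsi_\rho$ is a global bijection. Continuity of $\Phi^{-1}$ then follows by applying this variational construction parametrically, using the continuous dependence of harmonic maps (and hence of $F_{j, \rho}$) on $(j, \rho)$. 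The main obstacle will be the Hessian positivity: translating the zeroth-order Lipschitz bound $\lambda < 1$ into the strict positive-definiteness of a difference of Hessians is a delicate second-variation estimate, and it is precisely here that the Riemannian $\CAT(-1)$ hypothesis must play its essential role.
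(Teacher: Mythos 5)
Your overall variational framework is the same as the paper's: you introduce the identical functional $\F_{j,\rho}=\E(\,\cdot\,,j)-\E(\,\cdot\,,\rho)$, identify its critical points with the preimages of $j$ under $\PPsi_\rho$ via the gradient formula of Proposition \ref{t:GradEnergy}, and obtain properness (hence surjectivity of $\PPsi_\rho$ onto $\Dom(\rho)$) from the coercivity bound $\F_{j,\rho}\geq(1-\lambda^2)\E(\,\cdot\,,j)$ together with Tromba's Theorem \ref{t:PropernessEnergy}. That part is sound and matches the paper.

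The genuine gap is injectivity. You propose to show that the Hessian of $\F_{j,\rho}$ is positive definite at every critical point by a second-variation (Jacobi-operator) estimate, and you yourself flag this as ``the main obstacle'' without carrying it out. This is exactly the step that cannot be left as a plan: nothing in your sketch explains how the zeroth-order Lipschitz bound would enter a second-order estimate, and it is not even clear that $\E(\,\cdot\,,\rho)$ is $C^2$ on $\Teich(S)$ for a general $\CAT(-1)$ target (the paper only ever uses $C^1$ regularity). The paper avoids second variations entirely: at a critical point $X_1$ the forms $g_0$ (the hyperbolic metric with holonomy $j_0$ normalized so that the identity is harmonic) and $f^*g_M$ share the same $(2,0)$-part, and an explicit local-coordinate computation (Lemma \ref{l:compare-energie}) gives, for any other $X_2$ represented by $g_2$ with $\Id:(S,g_2)\to(S,g_0)$ harmonic,
\[
E_{g_2}(g_0)-E_{g_2}(f^*g_M)=\int_S \frac{e_{g_1}(g_0)-e_{g_1}(f^*g_M)}{\sqrt{1-4\norm{\Psi}_{g_1}^2/e_{g_1}(g_2)^2}}\,\Vol_{g_1}\;\geq\;\F_{j_0,\rho}(X_1)~,
\]
with equality if and only if $X_2=X_1$; combined with $E_{g_2}(g_0)-E_{g_2}(f^*g_M)\leq\F_{j_0,\rho}(X_2)$ this shows every critical point is a \emph{strict global} minimum, giving uniqueness with no curvature or Hessian input. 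Until you either carry out your Hessian estimate or replace it by such a direct comparison, injectivity is unproven. A secondary gap: for continuity of the inverse, pointwise continuity of $(j,\rho)\mapsto\F_{j,\rho}$ does not by itself prevent the minimizer from jumping or escaping to infinity; the paper needs the family $(\F_{j,\rho})$ to be \emph{locally uniformly} proper (which follows from the continuity of $\Lip$ plus your coercivity bound) and a separate lemma (Proposition \ref{p:UniformProp}) converting this into continuity of the argmin. You should make that step explicit rather than asserting that continuity ``follows by applying the construction parametrically.''
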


As a consequence of Theorem \ref{t:MonThm}, the connected components of $\Dom(S,\Isom(M))$ are in \mbox{1-1}~correspondance with the connected components of $\Rep_{nf}(S, \Isom(M))$. In the case where $M$ is the Poincar\'e half-plane, we have, by Kassel's theorem:
\[\Dom(S,\Isom^+(\H^2, g_P)) = \Adm(S)~.\]
Denoting by $\Rep_k(S,\PSL(2,\R))$ the connected component of representations with Euler class $k$ in $\Rep(S,\PSL(2,\R))$, we obtain:

\begin{BigCoro}
Let $S$ be a closed oriented surface of negative Euler characteristic. Then the space $\Adm(S)$ is homeomorphic to 
$$\Teich(S) \times \bigsqcup_{\chi(S) < k < -\chi(S)} \Rep_k(S,\PSL(2,\R))~.$$
In particular, it has $4g-5$ connected components, classified by the Euler class of the non-Fuchsian representation in each pair.
\end{BigCoro}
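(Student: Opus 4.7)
The plan is to deduce the corollary directly from Theorem~\ref{t:MonThm} applied to $M = \H^2$ with the Poincar\'e metric, combined with Kassel's criterion for admissibility and Goldman's classification of the connected components of $\Rep(S,\PSL(2,\R))$.

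First I would note that by Kassel's theorem quoted in the introduction, a pair $(j,\rho)$ with $j$ Fuchsian of Euler class $-\chi(S)$ is admissible if and only if $j$ strictly dominates $\rho$. After the normalization explained earlier (switch factors and/or postcompose with $\sigma$ to put $j$ in the Fuchsian component of Euler class $-\chi(S)$, then quotient by simultaneous conjugation by $\PSL(2,\R)\times\PSL(2,\R)$), we therefore have the equality
\[ \Adm(S) \;=\; \Dom(S, \Isom^+(\H^2,g_P)) \;=\; \Dom(S,\PSL(2,\R)), \]
viewed inside $\Teich(S)\times \Rep(S,\PSL(2,\R))$.

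Next I would apply Theorem~\ref{t:MonThm} with $(M,g_M) = (\H^2,g_P)$, which is the prototypical Riemannian $\CAT(-1)$ space. It yields a fiberwise homeomorphism
\[ \Dom(S,\PSL(2,\R)) \;\cong\; \Teich(S) \times \Rep_{nf}(S,\PSL(2,\R)). \]
Since $\H^2$ itself is a totally geodesic $2$-plane of curvature $-1$, a representation $\rho \in \Rep(S,\PSL(2,\R))$ fails to lie in $\Rep_{nf}$ precisely when it is Fuchsian. Hence $\Rep_{nf}(S,\PSL(2,\R))$ is the complement in $\Rep(S,\PSL(2,\R))$ of the two Fuchsian components (of extremal Euler class $\pm\chi(S)$).

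Finally, I would invoke Goldman's theorem: the connected components of $\Rep(S,\PSL(2,\R))$ are exactly the $\Rep_k(S,\PSL(2,\R))$ for integers $k$ with $\chi(S) \leq k \leq -\chi(S)$, and the Fuchsian locus coincides with the two extremal components $k = \pm\chi(S)$. Removing these, we obtain
\[ \Rep_{nf}(S,\PSL(2,\R)) \;=\; \bigsqcup_{\chi(S) < k < -\chi(S)} \Rep_k(S,\PSL(2,\R)), \]
which plugs into the preceding displayed homeomorphism. The count of components is then $(-\chi(S))-\chi(S)-1 = -2\chi(S)-1 = 4g-5$ for $S$ of genus $g$, with the Euler class of the non-Fuchsian factor serving as the classifying invariant. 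There is no genuine obstacle here beyond careful bookkeeping; the entire content lies in Theorem~\ref{t:MonThm} and Kassel's criterion, which together reduce the corollary to Goldman's component theorem.
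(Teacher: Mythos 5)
Your proposal is correct and follows essentially the same route as the paper: identify $\Adm(S)$ with $\Dom(S,\Isom^+(\H^2,g_P))$ via Kassel's theorem, apply Theorem~\ref{t:MonThm} with $M=\H^2$, and observe that $\Rep_{nf}(S,\PSL(2,\R))$ is exactly the union of the non-extremal Euler class components by Goldman's theorem. The component count $4g-5$ is also as in the paper.
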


\subsection{Further applications}

Note that the map $\PPsi_\rho$ depends non trivially on the choice of a normalization of the metric on $M$. Fix a metric $g_0$ on $M$ of sectional curvature $\leq -1$ and a constant $\alpha \geq 1$. Then the metric $\frac{1}{\alpha^2} g_0$ still has sectional curvature $\leq -1$. To mark the dependence of the function $\Lip$ on the metric on the target, we denote by $\Lip_{g}(j,\rho)$ the minimal Lipschitz constant of a $(j,\rho)$-equivariant map from $(\H^2, g_P)$ to $(M,g)$. Then we clearly have $\Lip_{\frac{1}{\alpha^2} g_0} = \frac{1}{\alpha} \Lip_{g_0}$. Hence, if we apply theorem \ref{t:MonThm} to $(M,\frac{1}{\alpha^2} g_0)$, we obtain a description of the space of pairs $(j, \rho) \in \Teich(S) \times \Rep(\pi_1(S),G)$ such that $\Lip_{g_0}(j,\rho) < \alpha$. For $\alpha > 1$, the curvature of $(M,\frac{1}{\alpha^2} g_0)$ is eveywhere $<-1$. Therefore, there is no totally geodesic plane of curvature $-1$ in $(M, \frac{1}{\alpha^2} g_0)$ and we obtain the following:

\begin{BigCoro} \label{c:Lipschitz<C}
Let $S$ be a closed oriented surface of negative Euler characteristic, $(M,g_M)$ a Riemannian $\CAT(-1)$ space and $\alpha$ a constant bigger than $1$. Then the domain
\[\left\{ (j,\rho) \in \Teich(S) \times \Rep(S,\Isom(M)) \mid \Lip_{g_M}(j,\rho) < \alpha \right \} \]
is fiberwise homeomorphic to
\[\Teich(S) \times \Rep(S,\Isom(M))~.\]
\end{BigCoro}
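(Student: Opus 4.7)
The plan is to reduce the statement directly to Theorem \ref{t:MonThm} by rescaling the target metric, exactly as the paragraph preceding the corollary suggests. I would proceed in three short steps.

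First, I would set $g_\alpha := \frac{1}{\alpha^2} g_M$. Since $g_M$ has sectional curvature $\leq -1$ and $\alpha > 1$, the rescaled metric $g_\alpha$ has sectional curvature $\leq -\alpha^2 < -1$, so $(M, g_\alpha)$ is again a Riemannian $\CAT(-1)$ space. Rescaling does not change the isometry group, so $\Isom(M, g_\alpha) = \Isom(M, g_M)$ and hence $\Rep(S, \Isom(M, g_\alpha)) = \Rep(S, \Isom(M))$ as topological spaces.

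Second, I would invoke the scaling identity noted just before the statement, namely $\Lip_{g_\alpha}(j,\rho) = \frac{1}{\alpha}\,\Lip_{g_M}(j,\rho)$, which is immediate from the fact that distances in $g_\alpha$ are those of $g_M$ divided by $\alpha$. This gives the equivalence
\[ \Lip_{g_M}(j, \rho) < \alpha \iff \Lip_{g_\alpha}(j, \rho) < 1, \]
so the domain whose topology is to be described coincides with the domain $\Dom(S, \Isom(M, g_\alpha))$ of pairs in which $j$ strictly dominates $\rho$ with respect to the rescaled metric.

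Finally, I would apply Theorem \ref{t:MonThm} to $(M, g_\alpha)$. Every totally geodesic $2$-plane in $(M, g_\alpha)$ has curvature at most $-\alpha^2 < -1$, so none of them has curvature exactly $-1$. Consequently $\Rep_{nf}(S, \Isom(M, g_\alpha))$ is the whole of $\Rep(S, \Isom(M, g_\alpha)) = \Rep(S, \Isom(M))$, and Theorem \ref{t:MonThm} produces the desired fiberwise homeomorphism
\[ \Dom(S, \Isom(M, g_\alpha)) \cong \Teich(S) \times \Rep(S, \Isom(M)). \]
There is no real obstacle: the whole content of the argument is that the hypothesis $\alpha > 1$ strictly sharpens the curvature bound upon rescaling, thereby eliminating the exceptional locus $\Rep \setminus \Rep_{nf}$ that appeared in the main theorem.
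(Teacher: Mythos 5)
Your proof is correct and follows exactly the paper's own argument: rescale the target metric to $\frac{1}{\alpha^2}g_M$, use the identity $\Lip_{\frac{1}{\alpha^2}g_M} = \frac{1}{\alpha}\Lip_{g_M}$ to convert the condition $\Lip_{g_M}(j,\rho)<\alpha$ into strict domination for the rescaled metric, and observe that the strictly sharpened curvature bound $<-1$ empties the exceptional locus so that $\Rep_{nf}$ becomes all of $\Rep$ when Theorem \ref{t:MonThm} is applied. Nothing is missing.
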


When applied to $M = (\H^2,g_P)$, we obtain a description of left open balls for Thurston's \emph{asymmetric distance} on $\Teich(S)$. Recall that this distance, introduced in \cite{Thurston86}, is defined by
\[d_{Th}(j,j') = \log \left(\Lip_{g_P}(j,j') \right)\] for $j$ and $j'$ any two representations of Euler class $-\chi(S)$. The function $d_{Th}$ is continuous, positive whenever $j$ and $j'$ are distinct, and satisfies the triangular inequality. However, it is not symmetric.

Fix a point $j_0$ in $\Teich(S)$ and a constant $C > 0$. Then, using the convexity of length functions on $\Teich(S)$ \cite{Wolpert87}, one can see that the domain
$$\{j \in \Teich(S) \mid d_{Th}(j_0, j) <C\}$$
is an open convex domain of $\Teich(S)$ for the Weil--Petersson metric (cf.\ section \ref{ss:Teichmuller}). In particular, it is homeomorphic to a ball of dimension $-3\chi(S)$. Since the distance is asymmetric, it is not clear whether the same holds for
$$\{j \in \Teich(S) \mid d_{Th}(j,j_0) <C\}~.$$
However, since 
\[d_{Th}(j,j_0) < C \Leftrightarrow \Lip(j,j_0) < e^C~,\]
we can apply corollary \ref{c:Lipschitz<C} and obtain:

\begin{BigCoro}
Let $S$ be a closed oriented surface of negative Euler characteristic and $j_0$ a point in $\Teich(S)$. Then for any $C > 0$, the domain
\[ \left \{ j \in \Teich(S) \mid d_{Th}(j,j_0) < C \right\} \]
is homeomorphic to $\Teich(S)$.
In other words, left open balls for Thurston's asymetric distance on $\Teich(S)$ are contractible.
\end{BigCoro}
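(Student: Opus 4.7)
The plan is to derive this corollary as an immediate fiberwise specialization of Corollary \ref{c:Lipschitz<C}, applied to the model case of the Poincar\'e half-plane.

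First, I would translate the Thurston-distance inequality into a Lipschitz inequality. By definition, $d_{Th}(j, j_0) = \log \Lip_{g_P}(j, j_0)$, so the left $C$-ball around $j_0$ coincides with
\[\left\{j \in \Teich(S) \mid \Lip_{g_P}(j, j_0) < e^C\right\}.\]
Since $C > 0$, the constant $\alpha := e^C$ satisfies $\alpha > 1$, which is precisely the hypothesis required to invoke Corollary \ref{c:Lipschitz<C}.

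Next, I would apply Corollary \ref{c:Lipschitz<C} to the Riemannian $\CAT(-1)$ space $(M, g_M) = (\H^2, g_P)$ with $\alpha = e^C$. This yields a fiberwise homeomorphism
\[\left\{(j, \rho) \in \Teich(S) \times \Rep(S, \Isom(\H^2, g_P)) \mid \Lip_{g_P}(j, \rho) < e^C\right\} \;\cong\; \Teich(S) \times \Rep(S, \Isom(\H^2, g_P)).\]
The point $j_0 \in \Teich(S)$ sits inside $\Rep(S, \Isom(\H^2, g_P))$ via the inclusion $\PSL(2, \R) = \Isom^+(\H^2, g_P) \hookrightarrow \Isom(\H^2, g_P)$, and restricting the fiberwise homeomorphism above to the fiber over $\rho = j_0$ produces exactly a homeomorphism from the left $C$-ball around $j_0$ onto $\Teich(S)$. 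Since $\Teich(S)$ is itself homeomorphic to $\R^{-3\chi(S)}$, contractibility follows.

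I do not expect any serious obstacle to overcome here. All the technical work has already been absorbed into Corollary \ref{c:Lipschitz<C}, which in turn reduces to Theorem \ref{t:MonThm} via the metric-rescaling trick $g_0 \rightsquigarrow \tfrac{1}{\alpha^2} g_0$. The only remaining task is the bookkeeping described above; the key conceptual point being that rescaling by a factor $\alpha > 1$ both makes the target strictly more negatively curved than $-1$ (eliminating the Fuchsian-restriction obstruction in Theorem \ref{t:MonThm}) and converts a Lipschitz inequality into a strict domination in the rescaled metric.
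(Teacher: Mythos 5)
Your proposal is correct and follows exactly the paper's route: the paper likewise rewrites $d_{Th}(j,j_0)<C$ as $\Lip_{g_P}(j,j_0)<e^C$ and invokes Corollary \ref{c:Lipschitz<C} with $\alpha=e^C>1$ and $M=(\H^2,g_P)$, restricting to the fiber over $\rho=j_0$. No gaps.
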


\subsection{Structure of the article and strategy of the proof}
The article is organized as follows. In the next section we recall some fundamental results about harmonic maps from a surface. In particular, we recall the Corlette--Labourie theorem of existence of equivariant harmonic maps and the Sampson--Hitchin--Wolf parametrization of the Teichm\"uller space by means of quadratic differentials. In the second section, we start by using those theorems to construct the map $\PPsi_\rho$ studied in \cite{DeroinTholozan}, and we prove that $\PPsi_\rho$ is a homeomorphism from $\Teich(S)$ to $\Dom(\rho)$. To do so, we make explicit the inverse of the map $\PPsi_\rho$. It will appear that reverse images of a point $j$ in $\Teich(S)$ by $\PPsi_\rho$ are exactly critical points of a certain functional $\F_{j,\rho}$. We will prove that when $j$ is in $\Dom(\rho)$, the functional $\F_{j,\rho}$ is proper and admits a unique critical point which is a global minimum. Hence the map $\PPsi_\rho$ is bijective. What's more, the functionals $\F_{j,\rho}$ vary continuously with $(j,\rho)$, and so does their unique minimum. This will prove the continuity of $\PPsi_\rho^{-1}$ and its continuous dependance in $\rho$.\\

We wish to thank Richard Wentworth and Mike Wolf for their interest in this work and their help in understanding some regularity question concerning harmonic maps. We also thank Olivier Guichard and Fanny Kassel for useful remarks on a previous version of this paper.

\section{Representations of surface groups, harmonic maps, and Teichm\"uller space} \label{s:harmonic}

In this section, we introduce briefly the tools from the theory of harmonic maps that we will need later. We refer to \cite{DaskalopoulosWentworth} for a more thorough survey.\\

\subsection{Existence theorems}

Recall that the \emph{energy density} of a non-negative symmetric \mbox{$2$-form $h$} on a Riemannian manifold $(X,g)$ is the function on $X$ defined as
$$e_g(h) = \frac{1}{2}\Tr(A(h))~,$$
where $A(h)$ is the unique field of endomorphisms of the tangent bundle such that for all $x\in X$ and all $u,v \in T_x X$, 
$$h_x(u,v) = g_x(u, A(h)_x v)~.$$
If $f$ is a map between two Riemannian manifolds $(X, g_X)$ and $(Y, g_Y)$ then its energy density is the energy density on $X$ of the pull-back metric:
$$e_{g_X}(f) = e_{g_X} (f^*g_Y)~.$$
This energy density can be integrated against the volume form $\Vol_{g_X}$ associated to $g_X$, giving the \emph{total energy} of $f$:
$$E_{g_X}(f) = \int_X e_{g_X}(f) \Vol_{g_X}~.$$

The map $f$ is called \emph{harmonic} if it is, in some sense, a critical point of  the total energy. For instance, harmonic maps from $\R$ to a Riemannian manifold are geodesics and harmonic maps from a Riemannian manifold to $\R$ are harmonic functions. In general, a map $f$ is harmonic if it satisfies a certain partial differential equation that can be expressed as the vanishing of the \emph{tension field}. We give and use the precise definition of harmonicity in \cite{DeroinTholozan}. Here we will be satisfied with existence results and some fundamental properties of those maps.

The first existence result is due to Eells and Sampson.
\begin{CiteThm}[Eells--Sampson, \cite{EellsSampson}]
Let $f: (X, g_X) \to (Y,g_Y)$ be a continuous map between two closed Riemannian manifolds. Assume $(Y, g_Y)$ has non-positive sectional curvature. Then there exists a harmonic map $f':  (X, g_X) \to (Y,g_Y)$ homotopic to $f$, which minimizes the energy among all maps homotopic to $f$. Moreover, if the sectional curvature of $Y$ is negative, then this map is unique, unless the image of $f'$ is included in a geodesic of $Y$.
\end{CiteThm}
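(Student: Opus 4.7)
The plan is to follow the heat-flow approach of Eells and Sampson. After smoothing $f$, consider the parabolic initial-value problem
$$\partial_t f_t = \tau(f_t), \qquad f_0 = f,$$
where $\tau$ denotes the tension field. Short-time existence of a smooth solution is standard for this quasi-linear system, since $\tau$ differs from the componentwise Laplacian only in lower-order nonlinear terms. Along the flow the energy decays via the identity
$$\ddt E_{g_X}(f_t) = -\int_X |\tau(f_t)|^2\, \Vol_{g_X}.$$

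The core analytic step is a Bochner-type inequality for the energy density. A direct computation produces, after absorbing manifestly non-positive contributions, an estimate of the form
$$\left(\partial_t - \Delta_{g_X}\right) e_{g_X}(f_t) \leq C\, e_{g_X}(f_t),$$
where $C$ depends only on a bound for the Ricci curvature of $g_X$; the curvature contribution from $g_Y$ has a favorable sign and is discarded thanks to the hypothesis on $(Y,g_Y)$. The parabolic maximum principle then gives a uniform-in-time $L^\infty$ bound on the energy density in terms of the initial data. I expect this Bochner computation and the accompanying maximum-principle argument to be the main technical obstacle.

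With uniform control on $e_{g_X}(f_t)$, long-time existence follows by iterating short-time existence, and the energy decay produces a sequence $t_n \to \infty$ with $\|\tau(f_{t_n})\|_{L^2} \to 0$. Standard parabolic regularity (or elliptic Schauder bootstrap at the limit) yields higher-order bounds, so a subsequence converges smoothly to a harmonic limit $f'$ homotopic to $f$.

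To obtain the minimizing and uniqueness assertions I would use convexity of the energy along geodesic homotopies. Given another map $g$ homotopic to $f'$, lift to universal covers and interpolate pointwise by the unique minimizing geodesic $h_s(x)$ from $f'(x)$ to $g(x)$. A second-variation computation, exploiting the non-positive curvature of $g_Y$, shows that $s \mapsto E_{g_X}(h_s)$ is convex, whence $E_{g_X}(f') \leq E_{g_X}(g)$. When $g_Y$ has strictly negative sectional curvature this convexity is strict unless, for every $x$, $df'(T_xX)$ and $dg(T_xX)$ lie in a common geodesic of $Y$; this forces the image of $f'$ to be contained in a single geodesic, yielding the stated uniqueness.
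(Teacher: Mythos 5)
The paper does not prove this statement: it is quoted as a classical theorem of Eells and Sampson (with the uniqueness clause essentially due to Hartman), so the only meaningful comparison is with the original heat-flow argument --- which is exactly the route you take. Your skeleton (short-time existence, energy decay, Bochner inequality, subsequential convergence, then convexity of the energy along geodesic homotopies for minimality and uniqueness) is the standard and correct one.

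There is, however, one step that fails as written: the claim that the parabolic maximum principle applied to $(\partial_t - \Delta_{g_X})\, e_{g_X}(f_t) \le C\, e_{g_X}(f_t)$ yields a \emph{uniform-in-time} $L^\infty$ bound on the energy density. The maximum principle alone gives only $\sup_X e_{g_X}(f_t) \le e^{Ct} \sup_X e_{g_X}(f_0)$, which degenerates as $t \to \infty$ and is useless for long-time existence and convergence. The missing ingredient --- and the actual crux of the Eells--Sampson estimate --- is to combine the differential inequality with the time-uniform $L^1$ bound $\int_X e_{g_X}(f_t)\, \Vol_{g_X} \le E_{g_X}(f_0)$ supplied by the energy decay identity, via a parabolic mean-value inequality (Moser iteration or a heat-kernel representation): for $t \ge 1$ one obtains $e_{g_X}(f_t)(x) \le C' \int_X e_{g_X}(f_{t-1})\, \Vol_{g_X} \le C' E_{g_X}(f_0)$ with $C'$ independent of $t$. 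With that repair, the rest of your argument --- long-time existence, $\|\tau(f_{t_n})\|_{L^2} \to 0$, elliptic bootstrap at the limit, and the first/second-variation analysis giving minimality, and uniqueness in strictly negative curvature unless the image lies in a geodesic --- goes through as you describe.
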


Eells and Sampson's paper contains a thorough study of the analytic aspects of harmonicity, which allows to extend their existence result in several cases. The one we will be interested~in is an equivariant version. Consider $(X,g_X)$ a closed Riemannian manifold, $(\tilde{X}, \tilde{g}_X)$ its universal cover, $(M,g_M)$ another Riemannian manifold and $\rho$ a representation of $\pi_1(X)$ into $\Isom(M)$. A map $f: \tilde{X} \to M$ is called \emph{$\rho$-equivariant} if for all $x\in \tilde{X}$ and all $\gamma \in \pi_1(X)$, we have
$$f(\gamma \cdot x) = \rho(\gamma) \cdot f(x).$$
Given such a map, the symmetric $2$-form $f^*g_M$ on $\tilde{X}$ is invariant under the action of $\pi_1(X)$ and thus induces a symmetric $2$-form on $X$. We define the energy density and the total energy of $f$ as the energy density and the total energy of this symmetric $2$-form.

The equivariant version of Eells--Sampson's theorem is due to Corlette \cite{Corlette88} in the specific case where $M$ is a symmetric space of non-compact type, and to Labourie \cite{Labourie91} in the more general case of a complete simply connected Riemannian manifold of non-positive curvature. We state it in the particular case where $(M,g_M)$ is a Riemannian $\CAT(-1)$ space. This implies that $(M,g_M)$ is \emph{Gromov hyperbolic}. Recall that, for such a space, there is a good notion of \emph{boundary at infinity}, such that every isometry of $M$ extends to a homeomorphism of the boudary  (see section \ref{as:CAT(-1)} of the appendix).

\begin{CiteThm}[Corlette, Labourie]
Let $(X,g_X)$ be a closed Riemannian manifold, $\tilde{X}$ its universal cover, and $(M,g_M)$ a Riemannian $\CAT(-1)$-space. Let $\rho$ be a representation of $\pi_1(X)$ into $\Isom(M)$. Assume that $\rho$ does not fix a point in $\partial_\infty M$. Then there exists a unique harmonic map from $(\tilde{X}, g_{\tilde{X}})$ to $(M,g_M)$ that is $\rho$-equivariant. This map minimizes the energy among all such equivariant maps.
\end{CiteThm}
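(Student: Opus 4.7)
The plan is to combine the harmonic map heat flow of Eells--Sampson with the convexity of distance in the $\CAT(-1)$ target, using the hypothesis on $\rho$ solely to prevent the flow from escaping to infinity. Since $M$ is simply connected and non-positively curved it is contractible, and because $X$ is compact, there exists a smooth $\rho$-equivariant map $f_0: \tilde{X} \to M$ of finite energy. Fix once and for all a basepoint $x_0 \in \tilde{X}$.

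For existence, run the $\rho$-equivariant harmonic map heat flow $t \mapsto f_t$ starting from $f_0$. By the standard Bochner estimates in non-positive curvature, the flow exists for all $t \geq 0$, preserves $\rho$-equivariance, and $t \mapsto E_{g_X}(f_t)$ is non-increasing; classical parabolic theory then gives uniform local $C^{1,\alpha}$ bounds on $\tilde{X} \times [1, \infty)$ depending only on $E_{g_X}(f_0)$. The decisive step is to show that $\{f_t(x_0)\}_{t\geq 1}$ stays in a bounded region of $M$. The energy bound, combined with the parabolic $C^1$ control, gives for each $\gamma \in \pi_1(X)$ a uniform estimate
\[ d_M(\rho(\gamma) f_t(x_0), f_t(x_0)) \leq C(\gamma)~, \]
obtained by integrating $|df_t|$ along a fixed rectifiable curve from $x_0$ to $\gamma \cdot x_0$. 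If $f_{t_n}(x_0) \to \xi \in \partial_\infty M$ along some sequence $t_n \to \infty$, Gromov-hyperbolicity of $M$ forces $\rho(\gamma) \xi = \xi$ for every $\gamma$, contradicting the hypothesis. Hence $\{f_t(x_0)\}$ stays bounded, the family $\{f_t\}$ is precompact on compact subsets of $\tilde{X}$, and a subsequential limit $f_\infty$ is a $\rho$-equivariant harmonic map; its energy minimality follows from the monotonicity of $E_{g_X}(f_t)$ along the flow started from any finite-energy competitor.

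For uniqueness, let $f_1, f_2$ be two $\rho$-equivariant harmonic maps. The function $u(x) = d_M(f_1(x), f_2(x))$ is $\pi_1(X)$-invariant and descends to a continuous function on the compact manifold $X$. The classical convexity computation in non-positive curvature shows that $u^2$ is subharmonic, so by the maximum principle on $X$, $u$ is constant equal to some $c \geq 0$. If $c > 0$, the pointwise geodesic from $f_1(x)$ to $f_2(x)$ would sweep out a $\rho$-invariant flat strip in $M$; strict negative curvature forbids any flat strip of positive width, so the images of $f_1$ and $f_2$ would have to lie in a common geodesic $\ell$. But then $f_2 - f_1$ would be a constant signed distance $\pm c$ along $\ell$, and this constancy would force each $\rho(\gamma)$ to act on $\ell$ by a translation, so both endpoints of $\ell$ on $\partial_\infty M$ would be fixed by $\rho$, again contradicting the hypothesis. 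Hence $c = 0$ and $f_1 = f_2$.

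The step I expect to be the main obstacle is the non-escape-to-infinity argument in the existence part: converting an integral energy bound into a pointwise displacement estimate for $f_t(x_0)$ under $\rho(\gamma)$ relies essentially on parabolic regularity, which is the reason the heat flow is preferable here to a purely variational minimization, where only weak $W^{1,2}$ compactness would be available a priori.
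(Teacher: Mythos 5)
This theorem appears in the paper only as quoted background, attributed to \cite{Corlette88} and \cite{Labourie91}; the text contains no proof of it, so there is no internal argument to compare yours against. Judged on its own, your heat-flow proof is essentially the standard one from the literature (closest to Corlette's strategy; Labourie argues by direct minimization): long-time existence and uniform $C^{1,\alpha}$ bounds via the Eells--Sampson Bochner estimate on the compact quotient, non-escape via the bounded-displacement estimate $d_M(\rho(\gamma)f_t(x_0),f_t(x_0))\leq C(\gamma)$ together with the fact that a sequence tending to $\xi\in\partial_\infty M$ with bounded displacement forces $\rho(\gamma)\xi=\xi$ (this is exactly the mechanism of Proposition \ref{p:ConditionAscoli} in the paper's appendix, used there for the continuity of $\Lip$), and uniqueness via convexity of the distance between harmonic maps plus flat-strip rigidity in strict negative curvature. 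The outline is sound; two points deserve tightening. First, in the uniqueness step, invariance of the geodesic $\ell$ only gives that each $\rho(\gamma)$ permutes its endpoints; an orientation-reversing element flips the sign of the constant signed displacement $c$ and hence forces $c=0$ directly, so the contradiction with the hypothesis on $\partial_\infty M$ is only needed in the case where every $\rho(\gamma)$ translates $\ell$ --- your phrasing skips this dichotomy. Second, monotonicity of $E(f_t)$ alone only yields $E(f_\infty)\leq E(f_0)$; to get minimality over \emph{all} equivariant maps you must either run the flow from a near-minimizing competitor and invoke the uniqueness you prove afterwards, or appeal to convexity of the energy along geodesic homotopies, so the order of the two halves of your argument matters.
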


\subsubsection{Dealing with representations fixing a point at infinity} \label{ss:ParabolicRep}

In the sequel, representations fixing a point in $\partial_\infty M$ will be called \emph{parabolic representations}. When $M$ is the symmetric space of some rank $1$ Lie group $G$, those are exactly the representations taking values into some parabolic subgroup of $G$. For some of those representations, the theorem of Corlette and Labourie does not apply, and we must say a few words in order to be able to include them in the proof later.

The main problem with a parabolic representation $\rho$ is that a sequence of equi-Lipschitz $\rho$-equivariant maps from $\tilde{X}$ to $M$ may not converge. In the appendix, we deal with this issue in order to prove the continuity of the function $\Lip(j, \, \cdot \,)$ at a parabolic representation. With the same ideas and a few notions of harmonic analysis, one can prove the following fact:

\begin{prop}
Let $\rho : \pi_1(X) \to \Isom(M))$ be a parabolic representation. Let $m_\rho$ be the morphism from $\pi_1(X)$ to $\R$ such that 
\[ | m_\rho(\gamma) | = l(\rho(\gamma)) \]
for all $\gamma \in \pi_1(X)$ (see lemma \ref{l:CaracterisationsRepresentationsReductibles} in the appendix).

Let $f_n$ be a sequence of $\rho$-equivariant maps from $\tilde{X}$ to $M$ whose energy converges to the infimum of the energies of all such equivariant maps. Then the symmetric $2$-form $f_n^*g_M$ converges to 
\[f^* \d x^2~,\]
where $f$ is a $m_\rho$-equivariant harmonic function from $\tilde{X}$ to $\R$ and $\d x^2$ is the canonical metric on $\R$. (The function $f$ is obtained by integrating the unique harmonic $1$-form on $X$ whose periods are given by $m_\rho$, and is unique up to a translation.)
\end{prop}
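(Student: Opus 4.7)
My plan is to reduce the problem to Hodge theory on $X$ via the Busemann function. Since $\rho$ is parabolic, it fixes a point $\xi \in \partial_\infty M$; the associated Busemann function $\beta_\xi : M \to \R$ is $1$-Lipschitz, and since every isometry fixing $\xi$ permutes horospheres, one has $\beta_\xi(\rho(\gamma) y) = \beta_\xi(y) + c(\gamma)$ for some homomorphism $c : \pi_1(X) \to \R$. The lemma in the appendix identifies $|c(\gamma)|$ with the translation length of $\rho(\gamma)$, so after fixing a sign I may assume $c = m_\rho$. Composing any $\rho$-equivariant map $F : \tilde{X} \to M$ with $\beta_\xi$ then produces an $m_\rho$-equivariant real-valued function, and the $1$-Lipschitz property yields the pointwise bound $|\d(\beta_\xi \circ F)|^2 \le 2\, e_{g_X}(F)$, hence $E_{g_X}(\beta_\xi \circ F) \le E_{g_X}(F)$.

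Next I would invoke Hodge theory on $X$: modulo an additive constant, there is a unique $m_\rho$-equivariant harmonic function $f : \tilde{X} \to \R$, obtained as a primitive of the harmonic representative of the class in $H^1(X,\R)$ determined by $m_\rho$, and by Dirichlet's principle $f$ minimizes the energy among all $m_\rho$-equivariant $C^1$ functions. The first key step is to show
\[
\inf_{F} E_{g_X}(F) \;=\; E_{g_X}(f),
\]
where the infimum runs over $\rho$-equivariant maps $F : \tilde{X} \to M$. The inequality $\ge$ is immediate from the preceding paragraph together with Dirichlet's principle. The reverse inequality is the hard part: I would establish it by constructing explicit approximating maps. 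Starting from any $\rho$-equivariant $F_0$, I would deform it by pushing its image along the horospheres based at $\xi$ towards $\xi$, so that its horospherical component $\beta_\xi \circ F_0$ is corrected to agree with $f$ while the transverse component shrinks, using the fact that the induced metric on horospheres contracts exponentially as one moves towards $\xi$ in a $\CAT(-1)$ space. This is the step that will demand the most care: it needs a concrete model for the $\Stab(\xi)$-action on horospheres and uniform estimates, in the spirit of the construction used in the appendix for the continuity of $\Lip$ at parabolic representations.

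Once this equality is in hand, the convergence of $f_n^\ast g_M$ falls out. Let $f_n$ be a minimizing sequence and set $g_n := \beta_\xi \circ f_n$. From $E_{g_X}(f) \le E_{g_X}(g_n) \le E_{g_X}(f_n) \to E_{g_X}(f)$ I get $E_{g_X}(g_n) \to E_{g_X}(f)$. The $1$-forms $\d g_n$ and $\d f$ descend to $X$ and lie in the same affine class modulo exact forms; since $\d f$ is the $L^2$-orthogonal projection onto the harmonic representative in that class, convergence of $L^2$-norms forces $\d g_n \to \d f$ strongly in $L^2(X)$, hence $\d g_n \otimes \d g_n \to \d f \otimes \d f = f^\ast \d x^2$ in $L^1$. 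Finally, the symmetric $2$-form
\[
\tau_n \;:=\; f_n^\ast g_M - \d g_n \otimes \d g_n
\]
is non-negative (again because $\beta_\xi$ is $1$-Lipschitz) and its trace has $L^1$-norm equal to $2 E_{g_X}(f_n) - \|\d g_n\|_{L^2}^2$, which tends to $0$. Being non-negative and of vanishing total mass, $\tau_n \to 0$ in $L^1$, and combining the two convergences gives $f_n^\ast g_M \to f^\ast \d x^2$ in $L^1$, as claimed.
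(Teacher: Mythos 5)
The paper offers no written proof of this proposition---it is stated with only the remark that it follows from ``the same ideas'' as the appendix---and your argument is a correct implementation of exactly those ideas: projecting by the Busemann function of the fixed point $\xi$ gives the lower bound $E_{g_X}(F)\geq E_{g_X}(\beta_\xi\circ F)\geq E_{g_X}(f)$ via the Dirichlet principle, while the exponential contraction of proposition \ref{p:ContractionBuseman} produces equivariant maps with energy tending to $E_{g_X}(f)$, and your final step (orthogonality of $\d f$ to exact forms forcing $\d g_n\to\d f$ in $L^2$, plus nonnegativity and vanishing trace of $\tau_n$) cleanly yields $L^1$ convergence of $f_n^*g_M$ to $f^*\d x^2$. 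The one point to tighten is the step you flag yourself: pushing toward $\xi$ by $F_{\xi,t}$ only translates the Busemann component by a constant, so before applying it you must first produce a $\rho$-equivariant $F_0$ with $\beta_\xi\circ F_0=f$, either by the section of the horosphere fibration used in the proof of lemma \ref{l:ConstanteLipschitzRepReductible} or by flowing an arbitrary $F_0$ along the geodesics toward $\xi$ by the bounded, $\pi_1$-invariant amount $f-\beta_\xi\circ F_0$.
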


In some sense, this $m_\rho$-equivariant function $f$ is the natural extension of the notion of $\rho$-equivariant harmonic maps to parabolic representations. In particular, when $\rho$ fixes two points in $\partial_\infty M$, it stabilizes a unique geodesic $c: \R \to M$; in that case, $\rho$-equivariant harmonic maps exist and have the form 
\[ c \circ f~.\]

\subsection{Harmonic maps from a surface, Hopf differential, and Teichm\"uller space}

From now on we will restrict to harmonic maps from a Riemann surface. In that case, the energy of a map only depends on the conformal class of the Riemannian metric on the base. For the same reason, harmonicity is invariant under a conformal change of the metric.

\subsubsection{Hopf differential} \label{ss:HopfDiff}

Let $S$ be an oriented surface equipped with a Riemannian metric $g$. Let $(M,g_M)$ be a Riemannian manifold, and $f: S \to M$ a smooth map. The conformal class of $g$ induces a complex structure on $S$. The symmetric $2$-form $f^*g_M$ can thus be uniquely decomposed into a $(1,1)$ part, a $(2,0)$ part and a $(0,2)$ part. One can check that the $(1,1)$~part is $e_g(f) g$, and we thus have
$$f^*g_M = e_g(f) g  + \Phi_f + \bar{\Phi}_f~,$$
where $\Phi_f$ is a \emph{quadratic differential} (i.e.\ a section of the square of the canonical bundle of $(S,[g])$ called the \emph{Hopf differential} of $f$.

The following proposition is classical.

\begin{prop}
If the map $f$ is harmonic, then its Hopf differential is holomorphic. The converse is true if $M$ is also a surface.
\end{prop}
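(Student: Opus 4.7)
The plan is to work in a local conformal coordinate $z=x+iy$ on $S$ (so $g = \lambda^2|dz|^2$), and extend the metric $g_M$ complex-bilinearly to $TM\otimes\C$. Let $\nabla$ denote the $\C$-linear extension of the pullback of the Levi-Civita connection; it is torsion-free and compatible with (the bilinear extension of) $g_M$. In these coordinates the Hopf differential takes the form $\phi\,dz^2$ with $\phi = g_M(f_z,f_z)$, and a short computation (rewriting $\partial_x,\partial_y$ in terms of $\partial_z,\partial_{\bar z}$ and using torsion-freeness) identifies the tension field of $f$ with $(4/\lambda^2)\nabla_{\partial_{\bar z}} f_z$. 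Harmonicity is therefore equivalent to $\nabla_{\partial_{\bar z}} f_z = 0$.

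For the forward direction, compatibility of $\nabla$ with $g_M$ immediately gives the key identity
\[
\partial_{\bar z}\phi \;=\; 2\,g_M\!\bigl(\nabla_{\partial_{\bar z}} f_z,\,f_z\bigr),
\]
so $f$ harmonic forces $\partial_{\bar z}\phi = 0$. Together with the coordinate-invariance of the construction (a conformal change of coordinate multiplies $\phi$ by the square of the holomorphic Jacobian), this upgrades $\Phi_f$ to a globally defined holomorphic quadratic differential on $S$.

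For the converse under the extra hypothesis $\dim M = 2$, the same identity gives $g_M(\nabla_{\partial_{\bar z}} f_z, f_z) = 0$. The crucial new ingredient is that $\nabla_{\partial_{\bar z}} f_z$ is actually \emph{real}, i.e.\ lies in $T_{f(p)}M \subset T_{f(p)}M\otimes\C$: complex conjugation in the complexified bundle commutes with $\nabla$ and exchanges $\partial_z\leftrightarrow\partial_{\bar z}$ and $f_z\leftrightarrow f_{\bar z}$, so torsion-freeness together with $[\partial_z,\partial_{\bar z}]=0$ yields $\overline{\nabla_{\partial_{\bar z}} f_z} = \nabla_{\partial_z} f_{\bar z} = \nabla_{\partial_{\bar z}} f_z$. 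Splitting the single complex equation $g_M(\nabla_{\partial_{\bar z}} f_z, f_z)=0$ into its real and imaginary parts then says $\nabla_{\partial_{\bar z}} f_z$ is $g_M$-orthogonal to both $df(\partial_x)$ and $df(\partial_y)$; when $\dim M = 2$ and $f$ is immersive at $p$, the $g_M$-orthogonal complement of $df(T_pS)$ in $T_{f(p)}M$ is trivial, forcing $\nabla_{\partial_{\bar z}} f_z = 0$ at $p$.

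The main obstacle is closing the argument on the closed locus where $df$ has rank $<2$, on which the equations above leave $\nabla_{\partial_{\bar z}} f_z$ one real degree of freedom. I would handle this by a dichotomy: either the immersive set is open and dense, in which case $\nabla_{\partial_{\bar z}} f_z\equiv 0$ by continuity, or $f$ has rank $\le 1$ on some nonempty open set $U$, in which case $f|_U$ factors locally through a $1$-dimensional submanifold of $M$ and harmonicity reduces to the one-dimensional target case, which can be checked directly from holomorphicity of $\phi = g_M(f_z,f_z)$.
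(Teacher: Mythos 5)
The paper gives no proof of this proposition --- it is quoted as ``classical'' --- so your attempt can only be judged on its own terms. The forward direction is correct and is the standard argument: the identification $\tau(f)=\frac{4}{\lambda^2}\nabla_{\partial_{\bar z}}f_z$, the reality of $\nabla_{\partial_{\bar z}}f_z$ via torsion-freeness and $[\partial_z,\partial_{\bar z}]=0$, and the identity $\partial_{\bar z}\phi=2\,g_M(\nabla_{\partial_{\bar z}}f_z,f_z)$ are all right, as is the observation that when $\dim M=2$ and $df$ has rank $2$ at a point, splitting that identity into real and imaginary parts forces $\nabla_{\partial_{\bar z}}f_z=0$ there.

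The obstacle you flagged yourself is, however, fatal to your proposed repair --- and in fact to the converse in the literal generality stated (``$f$ a smooth map''). Your reduction of the rank-$\le 1$ case to a one-dimensional target discards the \emph{normal} component of the tension field, which records the geodesic curvature of the image curve and is invisible to the Hopf differential. Concretely, take $M=\R^2$ flat and $f(x+iy)=(\cos x,\sin x)$: then $f^*g_M=\d x^2$, so $\Phi_f=\frac14\,\d z^2$ is holomorphic, yet $\tau(f)=\Delta f=-f\neq 0$, so $f$ is not harmonic. More generally, $f=\iota\circ h$ with $\iota$ an arclength-parametrized curve and $h$ a harmonic function has $\phi=h_z^2$ holomorphic, while $\tau(f)$ contains the normal term $\iota''(h)\,|h_z|^2$ (up to a conformal factor), which is nonzero wherever $\iota$ has nonzero geodesic curvature. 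So no argument can close the degenerate case as stated: the converse needs a nondegeneracy hypothesis (e.g.\ $df$ of rank $2$ on a dense subset), after which your continuity argument does finish the proof. This is harmless for the paper, where the relevant maps are harmonic diffeomorphisms between hyperbolic surfaces (Schoen--Yau), but your write-up should either add the hypothesis or restrict the claim. A secondary structural point: even granting your case (b), you only treat the open set $U$ where the rank degenerates, not the rest of $S$ nor the interface between the immersive and degenerate regions.
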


\subsubsection{The Teichm\"uller space} \label{ss:Teichmuller}
Let $S$ be a closed oriented surface of negative Euler characteristic. Recall that the Teichm\"uller space of $S$, denoted $\Teich(S)$, is the space of complex structures on $S$ compatible with the orientation, where two complex structures are identified if there is a biholomorphism between them isotopic to the identity.

By Poincar\'e--Koebe's uniformization theorem, any complex structure on $S$ admits a unique conformal Riemannian metric of constant curvature $-1$. Therefore, $\Teich(S)$ can also be seen as the space of hyperbolic metrics on $S$ up to isotopy. Lastly, any hyperbolic metric $g$ on $S$ induces a Fuchsian representation $j: \pi_1(S) \to \PSL(2,\R)$, such that $(S,g)$ identifies isometrically with $j(\pi_1(S)) \backslash \H^2$ ($j$ is called a \emph{holonomy representation} of the metric $g$). The representation $j$ is unique up to conjugation, has Euler class $-\chi(S)$, and $\Teich(S)$ thus identifies canonically with the space of  representations from $\pi_1(S)$ to $\PSL(2,\R)$ of Euler class $-\chi(S)$ modulo conjugation. Throughout the paper, a point in $\Teich(S)$ is denoted alternately by the letter $X$ when we think of it as the surface $S$ equipped with a complex structure, or by the letter $j$ when we think of it as a Fuchsian representation.

It is well known that the Teichm\"uller space is a manifold diffeomorphic $\R^{-3 \chi(S)}$ and that it carries a complex structure. Consider two points $X_1$ and $X_2$ in $\Teich(S)$ corresponding to two hyperbolic metrics $g_1$ and $g_2$ on $S$. Then, by Eells--Sampson's theorem, there is a unique harmonic map {$f_{g_1, g_2}: (S,g_1) \to (S,g_2)$} homotopic to the identity map. Schoen--Yau's theorem \cite{SchoenYau} (also proved by Sampson \cite{Sampson78} in that specific case) states that this map is a diffeomorphism. 

Of course, the map $f_{g_1,g_2}$ depends on the choice of $g_1$ and $g_2$ up to isotopy. Actually, fixing $g_1$ or $g_2$, one can choose the other metric so that the identity map itself is harmonic (by replacing $g_2$ by $f_{g_1,g_2}^*g_2$ or $g_1$ by $f_{g_1,g_2}^*g_1$). On the other hand, the total energy of $f_{g_1,g_2}$ is invariant under changing one of the metrics by isotopy, and thus gives a well defined functional
$$\E: \Teich(S) \times \Teich(S) \to \R_+~.$$

Besides, the Hopf differential of $f_{g_1,g_2}$ is invariant under isotopic changes of $g_2$, and if $h$ is a diffeomorphism of $S$, one has
$$\Phi_{f_{h^*g_1, g_2}} = h^* \Phi_{f_{g_1, g_2}}~.$$
The Hopf differential thus induces a well defined map 
$$\PPhi: \Teich(S) \times \Teich(S) \to \QD\Teich(S)~,$$
where $\QD\Teich(S)$ denotes the complex bundle of holomorphic quadratic differentials on $\Teich(S)$, that is, $\QD_X \Teich(S)$ is the space of holomorphic quadratic differentials on $X$. 

It is known since the work of Teichm\"uller that this bundle identifies with the tangent bundle to $\Teich(S)$. Sampson \cite{Sampson78} proved that the map $\PPhi$ is an injective immersion. In particular, if one fixes a point $X_0 \in \Teich(S)$, the derivative at $X_0$ of the map $\PPhi(X_0, \, \cdot \,)$ provides a new identification of the tangent space $\T_{X_0}\Teich(S)$ with $\QD_{X_0}\Teich(S)$.

Finally, Wolf \cite{Wolf89} proved that the map $\PPhi$ is a global homeomorphism. This was obtained independently by Hitchin \cite{Hitchin87}, as the first construction of a section to the \emph{Hitchin fibration}.

\begin{CiteThm}[Sampson, Hitchin, Wolf]
The map $\PPhi: \Teich(S) \times \Teich(S) \to \QD \Teich(S)$ is a homeomorphism.
\end{CiteThm}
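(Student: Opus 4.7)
I would fix a basepoint $X_0 \in \Teich(S)$ and first prove that the slice map
\[ \Phi_{X_0} : X \longmapsto \PPhi(X_0,X) \]
is a homeomorphism from $\Teich(S)$ onto the complex vector space $\QD_{X_0}\Teich(S)$. Since the harmonic map $f_{g_0,g}$ depends continuously on both metrics (by uniqueness in Eells--Sampson and standard elliptic regularity), the assignment $(X_0,X) \mapsto \PPhi(X_0,X)$ is continuous as a section, so piecing together fiberwise homeomorphisms will give the global statement for the map to the bundle $\QD\Teich(S)$.

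The injectivity and immersion properties of $\Phi_{X_0}$ are already attributed in the excerpt to Sampson: two distinct points of $\Teich(S)$ give rise to non-homotopic harmonic diffeomorphisms with distinct Hopf differentials, and the derivative at any point is injective. Combined with the fact that $\Teich(S)$ and $\QD_{X_0}\Teich(S)$ are smooth manifolds of the same real dimension $-3\chi(S)$, this promotes $\Phi_{X_0}$ to a local diffeomorphism onto an open subset of the vector space.

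The core analytic step is properness: if $X_n$ leaves every compact subset of $\Teich(S)$, then $\Phi_{X_0}(X_n)$ must leave every compact subset of $\QD_{X_0}\Teich(S)$. The natural approach is to prove a two-sided comparison between the energy $E_{g_0}(f_{g_0,g_n})$ of the harmonic diffeomorphism and the $L^1$-norm of its Hopf differential with respect to the hyperbolic area form on $X_0$. The bound $\|\PPhi(X_0,X_n)\|_{L^1} \leq E_{g_0}(f_{g_0,g_n})$ is immediate from the pointwise identity $|\Phi_f|^2 = HL \leq \tfrac{1}{4}(H+L)^2$, where $H$ and $L$ denote the holomorphic and antiholomorphic energy densities. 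For the reverse inequality, one uses the Bochner formula for harmonic maps into a surface of curvature $-1$, namely $\Delta_{g_0}\log H = 2(H-L-1)$ wherever $H>0$, combined with a maximum-principle argument to control $H$ pointwise by $|\Phi|$. This yields an estimate $E_{g_0}(f_{g_0,g_n}) \leq C(1 + \|\PPhi(X_0,X_n)\|_{L^1})$. On the other hand, comparing hyperbolic lengths of simple closed curves shows that any sequence $X_n$ going to infinity in $\Teich(S)$ forces $E_{g_0}(f_{g_0,g_n}) \to \infty$, so the Hopf differentials must blow up as well.

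Combining properness with the local diffeomorphism property, $\Phi_{X_0}$ is a proper local homeomorphism onto $\QD_{X_0}\Teich(S)$, hence a covering map; but the target is a simply connected vector space, so $\Phi_{X_0}$ is a homeomorphism. Continuity in $X_0$ of the inverse follows from continuity of $\PPhi$ together with properness in parameter families. The main obstacle I expect is the analytic input: establishing the Bochner-based energy bound in a form robust enough to cover the zero locus of $H$ (where the logarithm is singular) is the step that requires real care, while the topological conclusion is then standard.
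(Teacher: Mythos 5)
This statement is quoted in the paper as a citation (to Sampson, Hitchin \cite{Hitchin87} and Wolf \cite{Wolf89}); the paper itself gives no proof, so there is nothing internal to compare against. Your outline is, in substance, Wolf's original argument: fix the domain $X_0$, show the slice map into $\QD_{X_0}\Teich(S)$ is an injective immersion (Sampson), upgrade to a local diffeomorphism by equality of dimensions, prove properness via an energy--Hopf-differential comparison plus Tromba-type properness of the energy, and conclude by the proper-local-homeomorphism-onto-a-simply-connected-space argument. The one place you take a harder road than necessary is the reverse energy estimate. You do not need the Bochner formula or a maximum principle there (and hence need not worry about the zero locus of $H$): writing $e = H + L$, $\abs{\Phi}_g = \sqrt{HL}$ and $J = H - L$, the Schoen--Yau theorem (quoted in the paper) guarantees $f_{g_0,g}$ is an orientation-preserving diffeomorphism, so $J > 0$ pointwise, and the elementary inequality $\left(\sqrt{H}-\sqrt{L}\right)^2 \leq H - L$ together with the topological identity $\int_S J \, \Vol_{g_0} = 2\pi\abs{\chi(S)}$ gives
\[
2\norm{\PPhi(X_0,X)}_{L^1} \;\leq\; E_{g_0}(f_{g_0,g}) \;\leq\; 2\norm{\PPhi(X_0,X)}_{L^1} + 2\pi\abs{\chi(S)}~,
\]
which is the two-sided comparison you want with no analytic delicacy. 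With that substitution your sketch is a faithful reconstruction of the cited proof; the remaining steps (length--energy comparison forcing $E \to \infty$ along divergent sequences, covering-map conclusion, and continuity in $X_0$ for the bundle statement) are all standard and correctly placed.
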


\subsubsection{The Weil--Petersson metric on $\Teich(S)$}

Consider $X\in \Teich(S)$, and choose $g$ a hyperbolic metric representing $X$. Then $g$ naturally induces a Hermitian metric on the line bundle $K_X^2$. In a local conformal coordinate $z$, write
$$g = \alpha\, \d z \d \bar{z}$$
for some positive function $\alpha$. Then a section $\Phi$ of $K_X^2$ is locally of the form $\phi\, \d z^2$, and the Hermitian metric on $K_X^2$ induced by $g$ is given by
$$\norm{\Phi}_g^2 = \frac{|\phi|^2}{\alpha^2}~.$$
(One can check that this function is independent of the choice of a local conformal coordinate.)

This Hermitian metric allows to define a Hermitian norm on the complex vector space $\QD_X \Teich(S)$ of holomorphic quadratic differentials on $X$, given by
$$\norm{\Phi}_{WP}^2 = \int_S \norm{\Phi}_g^2 \Vol_g~.$$
Since $\QD_X \Teich(S)$ naturally identifies with $\T_X \Teich(S)$, this defines a Riemannian metric on $\Teich(S)$, called the \emph{Weil--Petersson metric}. Many interesting properties of the Weil-Petersson metric have been proved by Ahlfors \cite{Ahlfors61} and Wolpert \cite{Wolpert87}, among others. Here we will be satisfied with this simple definition.

\subsection{Equivariant harmonic maps and functionals on $\Teich(S)$}  \label{ss:EnergyFunctional}

The theorem of Corlette and Labourie allows to extend the maps $\E$ and $\PPhi$ to $\Teich(S) \times \Rep(S, \Isom(M))$. Given a point in $X_0 \in \Teich(S)$, represented by a hyperbolic metric $g_0$, and a point $\rho$ in $\Rep(\pi_1(S), \Isom(M))$, one can consider $f_{g_0,\rho}$ the unique $\rho$-equivariant harmonic map from $(\tilde{S}, \tilde{g}_0)$ to $(M,g_M)$. (If $\rho$ is parabolic, $f_{g_0,\rho}$ will denote instead a $m_\rho$-equivariant harmonic function, see section \ref{ss:ParabolicRep}.) The symmetric $2$-form $f_{g_0,\rho}^*g_M$ (or $f_{g_0,\rho}^*\d x^2$ when $\rho$ is parabolic) on $\tilde{S}$ is $\pi_1(S)$-invariant, hence the energy density and the Hopf differential of $f_{g_0,\rho}$ are also $\pi_1(S)$-invariant. They thus descend on $S$. We denote them $e_{g_0}(f_{g_0,\rho})$ and $\Phi_{f_{g_0,\rho}}$.

The energy density and the Hopf differential of $f_{g_0,\rho}$ only depend on the conjugacy class of the representation $\rho$. We thus obtain two well-defined maps
$$\function{\E}{\Teich(S) \times \Rep(S, \Isom(M))}{\R_+}{(X_0,\rho)}{\int_S e_{g_0}(f_{g_0,\rho}) \Vol_0}$$
and
$$\function{\PPhi}{\Teich(S) \times \Rep(S, \Isom(M))}{\QD \Teich(S)}{(X_0,\rho)}{\Phi_{f_{g_0,\rho}}~.}$$

\begin{rmk}
If $M = \H^2$, we have $\Isom^+(M) \simeq \PSL(2,\R)$. Consider two points $X, X_1 \in \Teich(S)$, and let $j_1$ be a holonomy representation of a  hyperbolic metric $g_1$ corresponding to $X_1$. Then a harmonic map from $X$ to $X_1$ isotopic to the identity lifts to a $j_1$-equivariant harmonic map from $\tilde{X}$ to $\H^2$.
We thus have
$$\E(X, j_1) = \E(X,X_1)$$
and
$$\PPhi(X, j_1) = \PPhi(X, X_1)~.$$
In other words, the new definition of $\E$ and $\PPhi$ extends the one given in the previous paragraph, via the identification of $\Teich(S)$ with the component of maximal Euler class in $\Rep(S, \PSL(2,\R))$.
\end{rmk}

\begin{prop} \label{p:ContinuityEPhi}
The function $\E(X,\rho)$ and the map $\PPhi(X,\rho)$ are continuous with respect to $X$ and $\rho$.
\end{prop}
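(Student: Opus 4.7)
The plan is to prove continuity of both $\E$ and $\PPhi$ at a fixed point $(X_0,\rho_0)$ by showing that the $\rho_0$-equivariant harmonic map $f_0 = f_{g_0,\rho_0}$ depends continuously on the data in a topology strong enough to commute with the formation of the energy density and the Hopf differential. Fix a sequence $(X_n,\rho_n)\to(X_0,\rho_0)$, choose hyperbolic representatives $g_n\to g_0$ smoothly, and let $f_n = f_{g_n,\rho_n}$. The first step is an upper energy bound: starting from $f_0$, construct a $\rho_n$-equivariant comparison map $\tilde f_n$ by patching translates of $f_0$ via a partition of unity on a fundamental domain, using that the isometries $\rho_n(\gamma)\rho_0(\gamma)^{-1}$ tend to the identity uniformly on compact subsets of $M$. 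Then $E_{g_n}(\tilde f_n)\to E_{g_0}(f_0)$ and the energy-minimizing property of harmonic maps yields $\E(X_n,\rho_n) = E_{g_n}(f_n) \le E_{g_n}(\tilde f_n)$, which is uniformly bounded.

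With this energy bound in hand, interior $\epsilon$-regularity for harmonic maps into a non-positively curved target (Eells--Sampson, Schoen--Yau) provides uniform local $C^{\infty}$ estimates on $f_n$, provided the image of one fundamental domain is confined to a fixed compact subset of $M$. Assuming for the moment that $\rho_0$ is non-parabolic, this confinement can be arranged by continuously normalizing $f_n$ at a basepoint. An Arzel\`a--Ascoli argument then extracts a subsequential smooth limit $f_\infty$, which is automatically $\rho_0$-equivariant and harmonic; uniqueness in the Corlette--Labourie theorem forces $f_\infty=f_0$, so the full sequence converges locally smoothly. Local $C^{\infty}$ convergence makes the pulled-back tensors $f_n^*g_M$ converge uniformly on a fundamental domain, hence the energy densities and the Hopf differentials converge on $S$. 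Integration yields $\E(X_n,\rho_n)\to\E(X_0,\rho_0)$, and passing to the limit in the bundle $\QD\Teich(S)$ gives $\PPhi(X_n,\rho_n)\to\PPhi(X_0,\rho_0)$.

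The main obstacle is the parabolic case, where $\rho_0$ fixes a point of $\partial_\infty M$: Corlette--Labourie does not apply as stated, uniform confinement of the image of $f_n$ is unavailable, and minimizing sequences may drift to infinity along a direction prescribed by the fixed point. Here I would use the extension described in section~\ref{ss:ParabolicRep}: the limiting object is the $m_{\rho_0}$-equivariant harmonic function $f_0\colon\tilde S\to\R$, and the tensors $f_n^*g_M$ are expected to converge to $f_0^*\d x^2$. Continuity at parabolic parameters thus reduces to the continuous dependence of the harmonic $1$-form on $S$ with prescribed periods on the pair $(g_n,m_{\rho_n})$, which is standard Hodge theory, together with the continuity of $\rho\mapsto m_\rho$ coming from $|m_\rho(\gamma)|=l(\rho(\gamma))$. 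This parabolic analysis is essentially the one needed in the appendix for the continuity of $\Lip$, and I would expect to reuse those technical estimates here.
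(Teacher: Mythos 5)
Your proposal is correct and follows essentially the same route as the paper: the paper's (very brief) proof likewise invokes elliptic regularity to control the derivatives of the equivariant harmonic maps $f_n$ by their total energy, extracts a $C^1$-convergent limit identified with $f_{g_0,\rho_0}$ when $\rho_0$ is non-parabolic, and defers the parabolic case to the same ideas used in the appendix for the continuity of $\Lip$. Your write-up merely fills in the details (uniform energy bound via comparison maps, Arzel\`a--Ascoli plus uniqueness, reduction of the parabolic case to Hodge theory) that the paper leaves implicit.
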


\begin{proof}
This is a classical consequence of the ellipticity of the equation defining harmonic maps. If $(j_n, \rho_n)$ converges to $(j,\rho)$, the derivatives of a $(j_n,\rho_n)$-equivariant harmonic map $f_n$ can be uniformly controlled by its total energy. If $\rho$ is not parabolic, one can deduce that the sequence $f_n$ converges in $C^1$ topology to a $(j,\rho)$-equivariant harmonic map $f$ (see section \ref{ass:LowerContinuity} in the appendix). The proposition easily follows.

The case where $\rho$ is parabolic can also be solved using the same ideas as those in the proof of the continuity of the minimal Lipschitz constant.
\end{proof}

We will use the following important results.

\begin{prop}[see \cite{Wentworth07}] \label{t:GradEnergy}
Let $(M,g_M)$ be a Riemannian $\CAT(-1)$ space and $\rho$ a representation of $\pi_1(S)$ into $\Isom(M)$. Then the functional
$$\function{\E(\, \cdot \, , \rho)}{\Teich(S)}{\R_+}{X}{\E(X,\rho)}$$
is $C^1$ and its differential at a point $X_0 \in \Teich(S)$ is given by
$$\Grad_{WP} \E(\, \cdot \, , \rho)(X_0) = - \PPhi(X_0, \rho)$$
(where $\Grad_{WP}$ denotes the gradient with respect to the Weil--Petersson metric).
\end{prop}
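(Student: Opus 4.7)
The plan is to reduce the statement to a standard first-variation computation for the energy functional, via an envelope-type argument that exploits the harmonicity of $f_{g_0,\rho}$.

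First, I would fix $X_0 \in \Teich(S)$, choose a hyperbolic metric $g_0$ representing $X_0$, and consider a smooth path $(g_t)$ of Riemannian metrics on $S$ whose conformal classes $X_t$ form a smooth path in $\Teich(S)$ with tangent vector at $t=0$ represented by a Beltrami differential $\mu \in \Omega^{0,1}(X_0, T^{1,0}X_0)$. Concretely, in isothermal coordinates $g_0 = e^{2\sigma}|dz|^2$ for $g_0$, one can take $g_t = e^{2\sigma}|dz + t\mu\, d\bar z|^2$, so that $\dot g := \partial_t g_t|_{t=0} = e^{2\sigma}\bigl(\mu\, (d\bar z)^2 + \bar\mu\, (dz)^2\bigr)$ is a \emph{trace-free} symmetric $2$-tensor with respect to $g_0$. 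Let $f_t$ denote the $\rho$-equivariant harmonic map for $g_t$ (or the $m_\rho$-equivariant harmonic function if $\rho$ is parabolic).

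Second, I would apply the envelope trick. Writing $\E(X_t,\rho) = E_{g_t}(f_t)$ and differentiating,
\[
\frac{d}{dt}\bigg|_{t=0}\E(X_t,\rho) \;=\; \frac{\partial}{\partial t}\bigg|_{t=0} E_{g_t}(f_0) \;+\; \frac{d}{ds}\bigg|_{s=0} E_{g_0}(f_s),
\]
and the second term vanishes because $f_0$ is a critical point of $E_{g_0}$ among $\rho$-equivariant maps (this is the harmonicity condition). Hence only the explicit metric dependence survives.

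Third, a direct computation from $E_g(f) = \tfrac{1}{2}\int_S \tr_g(f^*g_M)\, \Vol_g$, together with trace-freeness of $\dot g$, yields
\[
\frac{\partial}{\partial t}\bigg|_{t=0} E_{g_t}(f_0) \;=\; -\tfrac{1}{2}\int_S \langle \dot g,\, f_0^*g_M\rangle_{g_0}\,\Vol_{g_0}.
\]
Decomposing $f_0^*g_M = e_{g_0}(f_0)\,g_0 + \Phi + \bar\Phi$ with $\Phi = \PPhi(X_0,\rho)$, the trace-free piece of $\dot g$ pairs to zero against the $g_0$-term, and a short computation in the isothermal coordinates above identifies the remaining pairing with $-\Re\int_{X_0}\mu\,\Phi$. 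This is by definition the Weil--Petersson pairing of the tangent vector $\mu$ with the cotangent vector $-\Phi(X_0,\rho)$, giving the claimed formula for the gradient. The $C^1$ property then follows from the continuity of $\PPhi(\cdot,\rho)$ established in Proposition \ref{p:ContinuityEPhi}.

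The main obstacle is the justification of the envelope step: one needs enough regularity in $t$ of the harmonic maps $f_t$ to legitimately interchange $\partial_t$ and the energy integral and to invoke the vanishing of the variation in $f$. For non-parabolic $\rho$ this is standard elliptic regularity, combined with Proposition \ref{p:ContinuityEPhi}; for parabolic $\rho$ one works instead with the $m_\rho$-equivariant harmonic function from Section \ref{ss:ParabolicRep}, where the analogous first-variation computation carries through verbatim once one notes that the relevant Hopf differential is still $\PPhi(X_0,\rho)$.
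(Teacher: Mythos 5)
The paper gives no proof of this proposition, deferring entirely to the cited reference \cite{Wentworth07}; your argument is the standard first-variation/envelope computation underlying that reference, and it is correct, including the identification of the trace-free pairing with $-\Re\int\mu\Phi$ and the reduction of the parabolic case to the $m_\rho$-equivariant harmonic function. The only delicate point is the one you flag yourself — justifying that the variation in the map direction vanishes — and note that you can avoid differentiating $f_t$ in $t$ altogether by the two-sided minimizer comparison $E_{g_t}(f_t)\le E_{g_t}(f_0)$ and $E_{g_0}(f_0)\le E_{g_0}(f_t)$, which yields the envelope formula using only the $C^1$-convergence $f_t\to f_0$ already provided by Proposition \ref{p:ContinuityEPhi}.
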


\begin{thm}[Tromba, \cite{Tromba92}] \label{t:PropernessEnergy}
Consider a point $j$ in $\Teich(S)$. Then the function
$$\function{\E(\, \cdot \, , j)}{\Teich(S)}{\R_+}{X}{\E(X,j)}$$
is proper.
\end{thm}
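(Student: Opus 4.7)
The plan is to prove the contrapositive: if $(X_n)_{n\in\N}$ leaves every compact subset of $\Teich(S)$, then $\E(X_n, j) \to +\infty$. By Mumford's compactness criterion, this hypothesis yields essential simple closed curves $\gamma_n \subset S$ with hyperbolic length $\ell_n := \ell_{X_n}(\gamma_n) \to 0$. Let $Y = j(\pi_1(S)) \backslash \H^2$ be the fixed target hyperbolic surface. A key observation, which I would emphasize first, is the uniform lower bound $\ell_j(\gamma_n) \geq \sigma > 0$, where $\sigma = \sigma(j)$ is the systole of $Y$; indeed, each $\gamma_n$ is essential in $S$, so $j(\gamma_n)$ is a non-trivial hyperbolic isometry with translation length at least $\sigma$. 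This uniform bound is exactly what makes the fixed-target direction proper, even though the short curves $\gamma_n$ may vary.

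Next I would invoke the collar lemma: in the hyperbolic representative of $X_n$, the geodesic $\gamma_n$ admits an embedded collar $C_n$ of width $w_n \to +\infty$ with Fermi coordinates $(s,t) \in (\R/\ell_n\Z) \times [-w_n, w_n]$ in which the metric reads $\cosh^2(t)\, ds^2 + dt^2$. For each fixed $t$, the level curve $\gamma_{n,t} := \{t = \mathrm{const}\}$ has length $L_t = \ell_n \cosh(t)$ and is freely homotopic to $\gamma_n$ in $S$, so its image under any $j$-equivariant map $f: \tilde S \to \H^2$ (descending to $\bar f : S \to Y$) is a loop of length at least $\ell_j(\gamma_n) \geq \sigma$. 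Cauchy--Schwarz along the arclength-parametrized $\gamma_{n,t}$ then gives
\[
\sigma^2 \;\leq\; L_t \int_{\gamma_{n,t}} |d\bar f|^2\, ds_t \;=\; \ell_n \cosh(t) \int_{\gamma_{n,t}} |d\bar f|^2\, ds_t,
\]
where $s_t$ is arclength on $\gamma_{n,t}$.

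Integrating over $t \in [-w_n, w_n]$ and noting that $ds_t\, dt = \cosh(t)\, ds\, dt$ is exactly the area element on $C_n$, I obtain
\[
2\, E(\bar f) \;\geq\; \int_{C_n} |d\bar f|^2\, d\mathrm{vol}_{X_n} \;=\; \int_{-w_n}^{w_n} \!\int_{\gamma_{n,t}} |d\bar f|^2\, ds_t\, dt \;\geq\; \frac{\sigma^2}{\ell_n} \int_{-w_n}^{w_n} \frac{dt}{\cosh t}.
\]
Since $w_n \to +\infty$, the last integral tends to $\pi$ and is in particular bounded below by a positive constant for $n$ large. As this inequality holds for \emph{every} $j$-equivariant map, taking $\bar f$ to be the harmonic map (so $E(\bar f) = \E(X_n, j)$) yields $\E(X_n, j) \geq c\, \sigma^2/\ell_n \to +\infty$, establishing properness. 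The argument is quite elementary once assembled; the one step I would treat carefully is the change-of-variables identity $ds_t\, dt = d\mathrm{vol}_{C_n}$ in Fermi coordinates, which makes the collar estimate so clean. No harder obstacle arises, since the only global input beyond the collar lemma is the discreteness of $j$'s length spectrum, which guarantees the uniform lower bound on $\ell_j(\gamma_n)$.
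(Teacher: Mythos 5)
The paper offers no proof of this statement (it is quoted from Tromba), so the only question is whether your argument stands on its own; it does not, because of a genuine gap at the very first step. Mumford's compactness criterion characterizes compact subsets of the \emph{moduli} space $\Teich(S)/\mathrm{MCG}(S)$, not of $\Teich(S)$ itself: a sequence $(X_n)$ can leave every compact subset of $\Teich(S)$ while its systole stays bounded away from $0$. For instance take $X_n=\phi^n\cdot X_0$ for a non-trivial mapping class $\phi$ (say a Dehn twist): all the $X_n$ project to the \emph{same} point of moduli space, so no essential curve becomes short on $X_n$, yet $(X_n)$ diverges in $\Teich(S)$. For such sequences your argument produces no curves $\gamma_n$ with $\ell_{X_n}(\gamma_n)\to 0$, hence no collars of width $w_n\to+\infty$, and the estimate gives nothing. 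The closing claim that ``no harder obstacle arises'' is therefore exactly wrong: the harder obstacle is divergence inside the thick part of $\Teich(S)$.

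Everything after that first step is correct and standard: the uniform bound $\ell_j(\gamma_n)\geq\sigma>0$ from the fixed target, the identity $\d s_t\,\d t=\cosh(t)\,\d s\,\d t=\d\mathrm{vol}$ in Fermi coordinates, and the Cauchy--Schwarz/length--area computation yielding $2\E(X_n,j)\geq(\sigma^2/\ell_n)\int_{-w_n}^{w_n}\cosh(t)^{-1}\d t$. So your proof does establish properness of $\E(\,\cdot\,,j)$ as a function on moduli space, i.e.\ blow-up of the energy when the domain degenerates. To get properness on $\Teich(S)$ you must also treat thick-part divergence, where the energy blows up not because a curve gets short on the domain but because some curve gets long on the target. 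The same length--area argument, run in an arbitrary conformal annulus $A\subset X$ with core $\gamma$ rather than in a hyperbolic collar, gives $2\E(X,j)\geq \ell_j(\gamma)^2\,\mathrm{Mod}(A)$, i.e.\ $2\E(X,j)\geq\sup_\gamma \ell_j(\gamma)^2/\mathrm{Ext}_X(\gamma)$; one must then show this supremum tends to infinity along every divergent sequence in $\Teich(S)$ (e.g.\ via Kerckhoff's formula for the Teichm\"uller distance and a comparison of extremal and hyperbolic lengths on the fixed surface $Y$). That missing half is where the substance of Tromba's theorem lies.
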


\section{The homeomorphism from $\Teich(S)$ to $D(\rho)$} \label{s:proof}

We are now in possession of all the tools required to define the map $\PPsi_\rho$ introduced in \cite{DeroinTholozan} and to prove that it is a homeomorphism from $\Teich(S)$ to $\Dom(\rho)$.

\subsection{Construction of the map $\PPsi_\rho$} \label{ss:TheMapPsi}

Let $S$ be a closed oriented surface of negative Euler characteristic and $\rho$ a representation of $\pi_1(S)$ into the isometry group of a Riemannian $\CAT(-1)$ space $(M,g_M)$. Let $X_1$ be a point in $\Teich(S)$. Then $\PPhi(X_1,\rho)$ is a holomorphic quadratic differential on $X_1$, and the theorem of Sampson--Hitchin--Wolf asserts that there is a unique point $j_2$ in $\Teich(S)$ such that $\PPhi(X_1, j_2) = \PPhi(X_1, \rho)$. Setting
$$\PPsi_\rho(X_1) = j_2~,$$
we obtain a well-defined map
$$\PPsi_\rho: \Teich(S)\to \Teich(S)~.$$
This map only depends on the class of $\rho$ under conjugation by $\Isom(M)$. We can thus define a map
$$\function{\PPsi}{\Teich(S) \times \Rep(S,\Isom(M))}{\Teich(S) \times \Rep(S, \Isom(M))}{(X,\rho)}{( \PPsi_\rho(X), \rho)~.}$$

In \cite{DeroinTholozan}, the following is proved:

\begin{CiteThm}[Deroin--Tholozan]
Either $\rho$ preserves a totally geodesic $2$-plane of curvature $-1$ in restriction to which it is Fuchsian, or the image of the map $\PPsi_\rho$ lies in $\Dom(\rho)$. In particular, $\Dom(\rho)$ is non empty.
\end{CiteThm}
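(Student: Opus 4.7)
Fix $X_1 \in \Teich(S)$, represented by a hyperbolic metric $g_1$, and set $j_2 := \PPsi_\rho(X_1)$. The Corlette--Labourie theorem furnishes a unique $\rho$-equivariant harmonic map $f : \tilde{X}_1 \to M$, and likewise a unique $j_2$-equivariant harmonic map $u : \tilde{X}_1 \to \H^2$. By construction of $\PPsi_\rho$, both share the same Hopf differential $\Phi := \Phi_f = \Phi_u$. The Sampson--Schoen--Yau theorem asserts that $u$ descends to a harmonic diffeomorphism between the two hyperbolic surfaces, so the $(j_2,\rho)$-equivariant map $h := f \circ u^{-1} : \H^2 \to M$ is well defined, and the statement to prove reduces to $\Lip(h) < 1$.

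Writing $f^* g_M = e_{g_1}(f)\,g_1 + 2\Re\Phi$ and $u^* g_P = e_{g_1}(u)\,g_1 + 2\Re\Phi$, the two pullback bilinear forms differ only by their trace part. In an isothermal frame diagonalising $\Re\Phi$ they become simultaneously diagonal with eigenvalues $e_{g_1}(f)\pm 2|\Phi|_{g_1}$ and $e_{g_1}(u)\pm 2|\Phi|_{g_1}$ respectively, so the pointwise Lipschitz ratio of $h$ equals
\[
\max_{\pm}\sqrt{\frac{e_{g_1}(f)\pm 2|\Phi|_{g_1}}{e_{g_1}(u)\pm 2|\Phi|_{g_1}}}.
\]
By compactness of $S$, strict domination $\Lip(h)<1$ is thus equivalent to the pointwise strict inequality $e_{g_1}(f)<e_{g_1}(u)$. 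Using the algebraic identity $e^{2} = \mathcal{J}^{2} + 4|\Phi|_{g_1}^{2}$, where $\mathcal{J}(\cdot):=\sqrt{\det_{g_1}(\cdot^{\ast}g)}$ is the area-Jacobian, this reduces further to the pointwise inequality $\mathcal{J}(f)<\mathcal{J}(u)$.

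The heart of the proof is a Bochner-type comparison. For $u$ into the hyperbolic plane one has the identity $\Delta_{g_1}\log\mathcal{J}(u)=2\,\mathcal{J}(u)-2$, whereas for $f$ into a $\CAT(-1)$ space the generalised Eells--Sampson formula will give the inequality $\Delta_{g_1}\log\mathcal{J}(f)\ge 2\,\mathcal{J}(f)-2$, with equality at a point iff $df$ takes values in a totally geodesic $2$-plane of $M$ of sectional curvature exactly $-1$. Setting $\psi := \log\bigl(\mathcal{J}(f)/\mathcal{J}(u)\bigr)$ and subtracting yields the semilinear inequality $\Delta_{g_1}\psi \ge 2\,\mathcal{J}(u)\bigl(e^{\psi}-1\bigr)$ on the compact surface $S$. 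The strong maximum principle then delivers the desired dichotomy: either $\psi<0$ everywhere, giving $\Lip(h)<1$ and hence $\PPsi_\rho(X_1)\in\Dom(\rho)$ for every $X_1$; or $\psi\equiv 0$, in which case equality in the Bochner inequality holds everywhere, and an equivariance/monodromy argument assembles the pointwise totally geodesic planes into a single $\rho$-invariant totally geodesic plane $P\subset M$ of curvature $-1$ on which $\rho$ acts as a Fuchsian representation, which is precisely the exceptional case of the theorem. Since this alternative depends only on $\rho$ and not on $X_1$, the dichotomy holds uniformly over $\Teich(S)$.

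The main obstacle is establishing the Bochner inequality together with the sharp form of its equality case for an arbitrary higher-dimensional $\CAT(-1)$ target. The work consists in isolating, inside the second-variation computation underlying Eells--Sampson, the curvature contribution $-2\,K_M(df(T\tilde X_1))\cdot\mathcal{J}(f)$ and the non-negative second-fundamental-form term of the image, and checking that the simultaneous vanishing of both forces the image to lie locally inside a totally geodesic $2$-plane of curvature $-1$. Additional care is needed to handle the (isolated) zeros of $\Phi$ and the critical points of $f$, where $\log\mathcal{J}$ is undefined: one removes them by standard removable-singularity estimates, exploiting the holomorphicity of $\Phi$ and the smoothness of the harmonic maps involved.
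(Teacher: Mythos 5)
This statement is not proved in the paper under review: it is quoted verbatim from \cite{DeroinTholozan}, so the only meaningful comparison is with the argument of that reference. Your architecture does reproduce it: take the $\rho$-equivariant harmonic map $f$ and the Fuchsian harmonic map $u$ sharing the Hopf differential $\Phi$, note that $f^*g_M - u^*g_P = \left(e_{g_1}(f)-e_{g_1}(u)\right)g_1$, so that strict domination reduces to the pointwise inequality $e_{g_1}(f) < e_{g_1}(u)$, and establish the latter by a Bochner--maximum-principle comparison whose equality case forces the image of $f$ into a $\rho$-invariant totally geodesic plane of curvature $-1$. The reduction via simultaneous diagonalisation, and the equivalence $e_{g_1}(f)<e_{g_1}(u) \Leftrightarrow \mathcal{J}(f)<\mathcal{J}(u)$, are correct.

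The genuine gap is the central analytic identity. For a harmonic map $u$ between hyperbolic surfaces the Bochner identity reads $\Delta_{g_1}\log\|\partial u\|^2 = 2\left(\|\partial u\|^2-\|\bar\partial u\|^2\right)-2$: it governs the logarithm of the \emph{holomorphic} energy density, not of the Jacobian $\mathcal{J}(u)=\|\partial u\|^2-\|\bar\partial u\|^2$. A direct computation (using $\Delta_{g_1}\log\|\bar\partial u\|^2=-2\mathcal{J}(u)-2$ away from the zeros of $\Phi$) shows that $\Delta_{g_1}\log\mathcal{J}(u)$ equals $2e_{g_1}(u)-2$ plus nontrivial gradient terms, so the identity $\Delta_{g_1}\log\mathcal{J}(u)=2\mathcal{J}(u)-2$ from which you start is false, and the differential inequality $\Delta_{g_1}\psi\geq 2\mathcal{J}(u)\left(e^\psi-1\right)$ does not follow as written. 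The correct route (the one of Deroin--Tholozan) compares holomorphic energies: the maximum principle yields $\|\partial f\|^2\leq\|\partial u\|^2$, and one converts this into $e_{g_1}(f)\leq e_{g_1}(u)$ using the relation $\|\partial\|^2\,\|\bar\partial\|^2=\|\Phi\|_{g_1}^2$ shared by both maps together with the monotonicity of $x\mapsto x+\|\Phi\|_{g_1}^2/x$ on $[\|\Phi\|_{g_1},\infty)$; this is precisely where the sign of the Jacobian of $f$ and the degeneracy locus (zeros of $\Phi$ and of $df$) must be controlled, which your sketch defers to ``removable-singularity estimates'' but which is the delicate part of the proof. Two smaller points: the strong maximum principle gives your dichotomy only after one first shows $\psi\leq 0$ by evaluating the inequality at an interior maximum; and the claim that the alternative is independent of $X_1$ is true but needs the (easy) observation that the equality case produces a totally geodesic plane and a Fuchsian restriction of $\rho$ depending only on $\rho$.
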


\subsection{Surjectivity of the map $\PPsi_\rho$}

We first prove the following 
\begin{prop}
The map $\PPsi_\rho: \Teich(S) \to \Dom(\rho)$ is surjective.
\end{prop}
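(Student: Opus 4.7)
The plan is to realise each $j \in \Dom(\rho)$ as the image $\PPsi_\rho(X_0)$ of a critical point of the functional
\[
\F_{j,\rho} \colon X \mapsto \E(X,j) - \E(X,\rho)
\]
on $\Teich(S)$. By Proposition \ref{p:ContinuityEPhi} this functional is continuous, and by Proposition \ref{t:GradEnergy} it is of class $C^1$ with Weil--Petersson gradient
\[
\Grad_{WP} \F_{j,\rho}(X) \;=\; -\PPhi(X,j) + \PPhi(X,\rho).
\]
Consequently, $X$ is a critical point of $\F_{j,\rho}$ if and only if $\PPhi(X,j) = \PPhi(X,\rho)$, which by the Sampson--Hitchin--Wolf parametrisation is equivalent to $\PPsi_\rho(X) = j$. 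So it suffices to exhibit a single critical point, which I intend to do by showing that $\F_{j,\rho}$ attains its infimum.

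The key estimate is a pointwise energy comparison that exploits strict domination. Fix $\lambda \in (\Lip(j,\rho), 1)$ and a $(j,\rho)$-equivariant $\lambda$-Lipschitz map $f_2 \colon (\H^2, g_P) \to (M, g_M)$, and let $f_1 \colon (\tilde X, \tilde g_X) \to (\H^2, g_P)$ be the $j$-equivariant harmonic map whose energy is $\E(X,j)$. The composition $f_2 \circ f_1$ is a $\rho$-equivariant map from $\tilde X$ to $M$, and the inequality $f_2^* g_M \leq \lambda^2 g_P$ pulls back to $e_{g_X}(f_2 \circ f_1) \leq \lambda^2 \, e_{g_X}(f_1)$. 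Integrating, and then using the energy-minimising property of the $\rho$-equivariant harmonic map, yields
\[
\E(X,\rho) \;\leq\; \lambda^2 \, \E(X, j),
\]
so that
\[
\F_{j,\rho}(X) \;\geq\; (1 - \lambda^2)\, \E(X, j).
\]

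Tromba's theorem (Theorem \ref{t:PropernessEnergy}) asserts that $\E(\cdot, j)$ is proper on $\Teich(S)$. Since $1 - \lambda^2 > 0$, the inequality above forces $\F_{j,\rho}$ to be coercive: it tends to $+\infty$ as $X$ leaves every compact subset of $\Teich(S)$. A continuous coercive function on $\Teich(S)$ attains its infimum at some $X_0$, and since $\F_{j,\rho}$ is $C^1$ its differential vanishes there, giving $\PPhi(X_0, j) = \PPhi(X_0, \rho)$, i.e.\ $\PPsi_\rho(X_0) = j$, which proves surjectivity.

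The main obstacle is precisely the energy inequality with $\lambda^2 < 1$: it is \emph{strict} domination, rather than mere domination, that keeps $1-\lambda^2$ bounded away from zero and turns Tromba's properness of $\E(\cdot, j)$ into coercivity of $\F_{j,\rho}$. A minor technical point is the case of parabolic $\rho$, where $\E(X, \rho)$ is defined via the $m_\rho$-equivariant harmonic function from Section \ref{ss:ParabolicRep}; applying the same comparison to the limiting symmetric $2$-form should give the analogous bound, so no essential new difficulty arises.
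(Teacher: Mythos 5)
Your proposal is correct and follows essentially the same route as the paper: the same functional $\F_{j,\rho}$, the same identification of its critical points with preimages of $j$ via Proposition \ref{t:GradEnergy} and Sampson--Hitchin--Wolf, the same composition-with-a-Lipschitz-map estimate yielding coercivity, and the same appeal to Tromba's properness theorem. (Your constant $1-\lambda^2$ is the sharper form of the paper's $1-\Lip(j,\rho)-\epsilon$; the difference is immaterial.)
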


\begin{proof}
Fix $j_0\in \Teich(S)$ and $\rho \in \Rep(S, \Isom(M))$. Let us introduce the functional $$\function{\F_{j_0,\rho}}{ \Teich(S)}{\R}{X}{\E(X,j_0) - \E(X,\rho)~.}$$
By proposition \ref{t:GradEnergy}, the map $\F_{j_0,\rho}$ is $C^1$ and its gradient with respect to the Weil--Petersson metric is given by
$$\Grad_X(\F_{j_0,\rho}) = - \PPhi(X, j_0) + \PPhi(X,\rho)~.$$
Hence $X_1$ is a critical point of $\F_{j_0,\rho}$ if and only if $\PPhi(X_1, j_0) = \PPhi(X_1,\rho)$, which means precisely that 
$$\PPsi_\rho(X_1) = j_0~.$$
Proving that $j_0$ is in the image of $\PPsi_\rho$ is thus equivalent to proving that the map $\F_{j_0,\rho}$ admits a critical point. This will be a consequence of the following lemma:

\begin{lem}\label{l:properness}
For $j_0 \in \Teich(S)$ and $\rho \in \Rep(S, \Isom(M))$, we have the following inequality: 
$$\F_{j_0,\rho} \geq \left(1-\Lip(j_0,\rho)\right) \E(\, \cdot \,, j_0)~.$$
\end{lem}

\begin{proof}
Let $f$ be a $(j_0, \rho)$-equivariant Lipschitz map from $\H^2$ to $M$ with Lipschitz constant $\Lip(j_0,\rho) + \epsilon$.
Let $X$ be a point in $\Teich(S)$ represented by some hyperbolic metric $g$, and let $h$ be the unique $j_0$-equivariant map from $\tilde{X}$ to $\H^2$. We thus have $E_g(h) = \E(X,j_0)$. Besides, the map $f\circ h$ is a $\rho$-equivariant map from $\tilde{X}$ to $M$, and we thus have
$$\E(X,\rho) \leq E_g(f \circ h)~.$$
Since $f$ is $(\Lip(j_0,\rho) + \epsilon)$-Lipschitz, we have
$$\E(X,\rho) \leq E_g(f \circ h) \leq (\Lip(j_0,\rho) + \epsilon) E_g(h) = (\Lip(j_0,\rho) + \epsilon) \E(X,j_0)~,$$
from which we get
$$\F_{j_0,\rho}(X) = \E(X,j_0) - \E(X,\rho) \geq (1-\Lip(j_0,\rho) - \epsilon) \E(X,j_0).$$
This being true for any $\epsilon > 0$, we obtain the required inequality.
\end{proof}

Now, by Theorem \ref{t:PropernessEnergy}, the map $X\to \E(X,j_0)$ is proper. Therefore, if $j_0$ is in $\Dom(\rho)$, we have $(1- \Lip(j_0,\rho)) > 0$ and the function $\F_{j_0,\rho}$ is also proper. Hence $\F_{j_0,\rho}$ admits a minimum. This minimum is a critical point, and thus a pre-image of $j_0$ by the map $\PPsi_\rho$. We obtain that $\PPsi_\rho: \Teich(S) \to \Dom(\rho)$ is surjective.
\end{proof}

We proved that if $j_0$ is in $\Dom(\rho)$, the functional $\F_{j_0,\rho}$ is proper. Note that, conversely, if $\F_{j_0,\rho}$ is proper, then it admits a critical point. Hence $j_0$ is in the image of $\PPsi_\rho$, which implies that $j_0$ lies in $\Dom(\rho)$ by \cite{DeroinTholozan}. We thus obtain the following corollary that might be interesting in its own way:

\begin{coro}
The following are equivalent:
\begin{itemize}
\item[$(i)$] the map $\F_{j_0, \rho}$ is proper,
\item[$(ii)$] the map $\F_{j_0,\rho}$ admits a critical point,
\item[$(iii)$] the representation $j_0$ strictly dominates $\rho$.
\end{itemize}
\end{coro}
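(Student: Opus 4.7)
The plan is to establish the cycle $(iii) \Rightarrow (i) \Rightarrow (ii) \Rightarrow (iii)$. Two of the three implications are essentially already packaged inside the surjectivity proof just completed, so the only genuinely new ingredient is the passage from properness of $\F_{j_0,\rho}$ to existence of a critical point.

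For $(iii) \Rightarrow (i)$, I would reuse Lemma \ref{l:properness} together with Tromba's Theorem \ref{t:PropernessEnergy}: if $\Lip(j_0,\rho)<1$, then $\F_{j_0,\rho}$ is bounded below by the positive multiple $(1-\Lip(j_0,\rho))\,\E(\cdot,j_0)$ of a proper function. Thus any sublevel set $\{\F_{j_0,\rho}\le b\}$ sits inside $\{\E(\cdot,j_0)\le b/(1-\Lip(j_0,\rho))\}$, which is compact; combined with continuity of $\F_{j_0,\rho}$, this yields properness. For $(ii) \Rightarrow (iii)$, I would unwind Proposition \ref{t:GradEnergy}: a critical point $X_0$ of $\F_{j_0,\rho}$ satisfies $\PPhi(X_0,j_0)=\PPhi(X_0,\rho)$, which by definition of $\PPsi_\rho$ means $\PPsi_\rho(X_0)=j_0$. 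The Deroin--Tholozan theorem then puts $j_0\in \Dom(\rho)$.

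The hard part is $(i)\Rightarrow(ii)$. A proper continuous real-valued function need not admit a critical point on an arbitrary manifold (the second projection on the cylinder $S^1\times\R$ is a counterexample), so one must use a topological feature specific to $\Teich(S)$. The key input I would rely on is that $\Teich(S)$ is diffeomorphic to $\R^{6g-6}$ with $6g-6\ge 6$, and therefore has exactly one end. For any proper continuous function $f$ on a connected, locally compact space with a single end, one has either $f\to+\infty$ or $f\to-\infty$ at infinity: otherwise any level set $f^{-1}(c)$ would meet every neighbourhood of infinity, via a path connecting a sequence with $f\to+\infty$ to one with $f\to-\infty$ taken in the complement of a large compact set, contradicting compactness of $f^{-1}(c)$.

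Applying this observation to $\F_{j_0,\rho}$ on $\Teich(S)$, the function attains a global extremum, and Proposition \ref{t:GradEnergy} (which makes $\F_{j_0,\rho}$ of class $C^1$) ensures this extremum is a critical point. This closes the cycle. The only delicate point, as noted above, is really the topological fact about one-ended spaces; once that is in hand, both the already-computed inequality of Lemma \ref{l:properness} and the Deroin--Tholozan theorem do the rest of the work.
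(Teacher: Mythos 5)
Your proof is correct and follows essentially the same cycle $(iii)\Rightarrow(i)\Rightarrow(ii)\Rightarrow(iii)$ as the paper, using Lemma \ref{l:properness} together with Tromba's Theorem \ref{t:PropernessEnergy} for $(iii)\Rightarrow(i)$, and the gradient formula of Proposition \ref{t:GradEnergy} plus the Deroin--Tholozan theorem for $(ii)\Rightarrow(iii)$. The only difference is that you spell out the justification of $(i)\Rightarrow(ii)$ --- a proper $C^1$ function on the one-ended manifold $\Teich(S)$ must tend to $+\infty$ or to $-\infty$ at infinity and hence attains a global extremum, which is a critical point --- a step the paper asserts without comment.
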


\subsection{Injectivity of the map $\PPsi_\rho$}

To prove injectivity, we need to prove that when $j_0$ is in $\Dom(\rho)$, the critical point of $\F_{j_0,\rho}$ is unique. To do so, we prove that any critical point of $\F_{j_0,\rho}$ is a strict minimum of $\F_{j_0,\rho}$.

Let $X_1$ be a critical point of $\F_{j_0,\rho}$, and $X_2$ another point in $\Teich(S)$. Choose a hyperbolic metric $g_1$ on $S$ representing $X_1$. Let $g_0$ be the hyperbolic metric of holonomy $j_0$ such that $\Id: (S,g_1) \to (S,g_0)$ is harmonic, and $g_2$ the hyperbolic metric representing $X_2$ such that $\Id: (S,g_2)\to (S,g_0)$ is harmonic. Let $f: (\tilde{S}, \tilde{g}_1)\to (M,g_M)$ be a $\rho$-equivariant harmonic map. We have the following decompositions:
$$g_0 = e_{g_1}(g_0) g_1 + \Phi + \bar{\Phi},$$
$$f^*g_M = e_{g_1}(f) g_1 + \Phi + \bar{\Phi},$$
$$g_2 = e_{g_1}(g_2) g_1 + \Psi + \bar{\Psi},$$
where $\Phi$ and $\Psi$ are quadratic differentials on $S$, with $\Phi$ holomorphic with respect to the complex structure induced by $g_1$. Note that the same $\Phi$ appears in the decomposition of $g_0$ and $f^*g_M$ because $X_1$ is a critical point of $\F_{j_0,\rho}$, and thus $\PPhi(X_1,j_0) = \PPhi(X_1,\rho)$.

\begin{lem}\label{l:compare-energie}
We have the following identity:
\[ E_{g_2}(g_0) - E_{g_2}(f^*g_M) = \int_S \frac{1}{\sqrt{1- \frac{4\norm{\Psi}_{g_1}^2}{e_{g_1}(g_2)^2}}} \left( e_{g_1}(g_0) - e_{g_1}(f^*g_M)\right) \Vol_{g_1}~. \]
\end{lem}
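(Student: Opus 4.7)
The key observation is that the critical point condition $\PPhi(X_1,j_0)=\PPhi(X_1,\rho)$ forces the $(2,0)$ parts of $g_0$ and $f^*g_M$ (with respect to the complex structure of $g_1$) to coincide. Consequently, subtracting the two decompositions listed before the lemma gives the conformal relation
\[ g_0 - f^*g_M \;=\; \bigl(e_{g_1}(g_0) - e_{g_1}(f^*g_M)\bigr)\, g_1 \,, \]
so the difference is a \emph{scalar multiple of $g_1$}. This is what makes the computation elementary.

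The map $h \mapsto e_{g_2}(h) = \tfrac{1}{2}\Tr(g_2^{-1}h)$ is linear in $h$, even when $h$ is not positive. Applying it to the previous identity gives
\[ e_{g_2}(g_0) - e_{g_2}(f^*g_M) \;=\; \bigl(e_{g_1}(g_0) - e_{g_1}(f^*g_M)\bigr)\, e_{g_2}(g_1)\,. \]
Integrating against $\Vol_{g_2}$ reduces the lemma to the pointwise identity
\[ e_{g_2}(g_1)\,\Vol_{g_2} \;=\; \frac{1}{\sqrt{1-\tfrac{4\norm{\Psi}_{g_1}^{\,2}}{e_{g_1}(g_2)^{2}}}}\, \Vol_{g_1}\,. \]

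To check this, I would work in a local conformal coordinate $z$ for $g_1$, writing $g_1 = \lambda^2 |dz|^2$ and $\Psi = \psi\, dz^2$, so that $\norm{\Psi}_{g_1}^{\,2} = |\psi|^2/\lambda^4$. Setting $e := e_{g_1}(g_2)\lambda^2$, the decomposition $g_2 = e_{g_1}(g_2)g_1 + \Psi + \bar\Psi$ becomes, in the basis $(\partial_x,\partial_y)$,
\[ g_2 \;=\; \begin{pmatrix} e+2\Re\psi & 2\Im\psi \\ 2\Im\psi & e-2\Re\psi \end{pmatrix}, \]
whose determinant is $e^2 - 4|\psi|^2 = \lambda^4\bigl(e_{g_1}(g_2)^2 - 4\norm{\Psi}_{g_1}^{\,2}\bigr)$. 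Therefore $\Vol_{g_2}/\Vol_{g_1} = \sqrt{e_{g_1}(g_2)^2 - 4\norm{\Psi}_{g_1}^{\,2}}$. A direct computation of $\Tr(g_2^{-1}g_1)$ (using $g_1 = \lambda^2 I$) gives $2e\lambda^2/\det g_2$, hence
\[ e_{g_2}(g_1) \;=\; \frac{e_{g_1}(g_2)}{e_{g_1}(g_2)^2 - 4\norm{\Psi}_{g_1}^{\,2}}\,. \]
Multiplying by $\Vol_{g_2}/\Vol_{g_1}$ yields the desired pointwise identity.

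\textbf{Main obstacle.} There is no real obstacle once one notices that the critical point condition reduces the problem to a scalar conformal change on the right of the difference. The only nontrivial point is the pointwise computation of $e_{g_2}(g_1)\,\Vol_{g_2}$ in terms of $g_1$-data, and this is handled by the conformal-coordinate computation above; one must simply be careful that the determinant of $g_2$ (the square of its volume form relative to $g_1$) and its inverse trace combine so that the factor $e_{g_1}(g_2)^2 - 4\norm{\Psi}_{g_1}^{\,2}$ appears with the correct (half-)power.
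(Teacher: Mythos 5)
Your proof is correct and follows essentially the same route as the paper's: both exploit the fact that the critical-point condition makes $g_0$ and $f^*g_M$ share their $(2,0)$-part, and both reduce to a $2\times 2$ matrix computation in a conformal coordinate for $g_1$. Your use of the linearity of $h\mapsto e_{g_2}(h)$ to subtract first and compute only $e_{g_2}(g_1)\,\Vol_{g_2}$ is a mild streamlining of the paper's argument, which computes $e_{g_2}(g_0)$ and $e_{g_2}(f^*g_M)$ separately and observes the cancellation of the cross terms $\phi\bar\psi+\bar\phi\psi$ only at the end.
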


\begin{proof}[Proof of lemma \ref{l:compare-energie}]
This is a rather basic computation that we will carry out in local coordinates. Let $z = x+ i y$ be a local complex coordinate with respect to which $g_1$ is conformal. We denote by $\scal{\, \cdot \,}{\, \cdot \,}$ the standard scalar product on $\C$

Any symmetric $2$-form on $\C$ can be written under the form $\scal{\, \cdot \,}{G \, \cdot \,}$, where $G$ is a field of symmetric endomorphisms of $\R^2$ depending on the coordinates $(x,y)$. We will represent such an endomorphism by its matrix in the canonical frame $(\frac{\partial}{\partial x}, \frac{\partial}{\partial y})$.
In local coordinates, we can thus write
$$g_0 =\scal{\, \cdot \,}{G_0 \, \cdot \,}~,$$
$$g_1= \scal{\, \cdot \,}{G_1 \, \cdot \,}~,$$
$$g_2= \scal{\, \cdot \,}{G_2 \, \cdot \,}~,$$
$$f^*g_M= \scal{\, \cdot \,}{G_f \, \cdot \,}~.$$
Now, since $g_1$ is conformal with respect to the coordinate $z$, we have $g_1 = \alpha \scal{\, \cdot \,}{\, \cdot \,}$ for some positive function $\alpha$, and we can write
$$\Phi = \phi \,\d z^2~,$$
$$\Psi = \psi \,\d z^2~,$$
for some complex valued functions $\phi$ and $\psi$. (Since $\Phi$ is holomorphic, $\phi$ must be holomorphic, but we won't need it for our computation.)

We can now express $G_0$, $G_1$, $G_2$, $G_f$, $\Vol_{g_1}$ and $\Vol_{g_2}$ in terms of $\alpha$, $e_{g_1}(g_0)$, $e_{g_1}(g_2)$, $e_{g_1}(f)$, $\phi$ and $\psi$. We easily check that
$$G_1 = \left(\begin{matrix}
\alpha & 0 \\
0 & \alpha 
\end{matrix}\right)~,$$

$$G_0 = \left(\begin{matrix}
\alpha e_{g_1}(g_0) + 2\Re(\phi) & -2 \Im(\phi) \\
-2\Im(\phi) & \alpha  e_{g_1}(g_0) - 2 \Re(\phi)
\end{matrix}\right)~,$$

$$G_f = \left(\begin{matrix}
\alpha e_{g_1}(g_f) + 2\Re(\phi) & -2 \Im(\phi) \\
-2\Im(\phi) & \alpha  e_{g_1}(g_f) - 2 \Re(\phi)
\end{matrix}\right)~,$$

$$G_2 = \left(\begin{matrix}
\alpha e_{g_1}(g_2) + 2\Re(\psi) & -2 \Im(\psi) \\
-2\Im(\psi) & \alpha  e_{g_1}(g_2) - 2 \Re(\psi)
\end{matrix}\right)~,$$

$$\Vol_{g_1} = \alpha \d z \d \bar{z}~,$$

$$\Vol_{g_2} = \sqrt{\det G_2} \d z \d \bar{z} = \sqrt{\alpha^2 e_{g_1}(g_2)^2 - 4 |\psi|^2} \d z \d \bar{z}~.$$
We now want to express $e_{g_2}(g_0)$ and $e_{g_2}(f^*g_M)$. To do so, note that we can write
\begin{eqnarray*}
g_0(\, \cdot \,, \, \cdot \,) &= & \scal{\, \cdot \,}{ G_0 \, \cdot \,}\\
\ & = & \scal{\, \cdot \,}{ G_2(G_2^{-1}G_0) \, \cdot \,}\\
\ & = & g_2(\, \cdot \,, G_2^{-1} G_0 \, \cdot \,)~.
\end{eqnarray*}
By definition of the energy density, we thus obtain that
\begin{eqnarray*}
e_{g_2}(g_0) &=& \frac{1}{2}\Tr(G_2^{-1}G_0)\\
\ & = & \frac{1}{2} \Tr \left[
                     \frac{1}{\det G_2} \left( \begin{matrix}
                                                   											\alpha e_{g_1}(g_2) - 2\Re(\psi) & 2 \Im(\psi) \\
																								2\im(\psi) & \alpha  e_{g_1}(g_2) + 2 \Re(\psi)
																								\end{matrix}\right)	\right.\\
																								\ & \ & \times	\left.						
																								\left(\begin{matrix}
																								\alpha e_{g_1}(g_0) + 2\Re(\phi) & -2 \Im(\phi) \\
																								-2\im(\phi) & \alpha  e_{g_1}(g_0) - 2 \Re(\phi)
																								\end{matrix}\right) \right] \\
\ & = & \frac{1}{\alpha^2 e_{g_1}(g_2)^2 - 4 |\psi|^2} \left( \alpha^2 e_{g_1}(g_2) e_{g_1}(g_0) - ( \phi \bar{\psi} + \bar{\phi} \psi) \right) .
\end{eqnarray*}
Similarly, we get that 
$$e_{g_2}(f^*g_M) = \frac{1}{\alpha^2 e_{g_1}(g_2)^2 - 4 |\psi|^2} \left( \alpha^2 e_{g_1}(g_2) e_{g_1}(f) - ( \phi \bar{\psi} + \bar{\phi} \psi) \right)~.$$
When computing the difference, the terms $\phi \bar{\psi} + \bar{\phi} \psi$ simplify. (Here we use the fact that $f^*g_M$ and $g_0$ have the same $(2,0)$-part.) We eventually obtain

\begin{eqnarray*}
\left( e_{g_2}(g_0) - e_{g_2}(f^*g_M) \right) \Vol_{g_2} & = & \frac{\alpha^2 e_{g_1}(g_2) (e_{g_2}(g_0) - e_{g_2}(f^*g_M))}{\sqrt{\alpha^2 e_{g_1}(g_2)^2 - 4 |\psi|^2}}\, \d z \d \bar{z} \\
\ & = & \frac{e_{g_2}(g_0) - e_{g_2}(f^*g_M)}{\sqrt{1 - 4 \frac{\norm{\Psi}_{g_1}^2}{e_{g_1}(g_2)^2}}} \Vol_{g_1}~.
\end{eqnarray*}
Now, the parameters of the last expression are well defined functions on $S$ and the identity is true in any local chart. It is thus true everywhere on $S$ and, when integrating, we obtain lemma~\ref{l:compare-energie}.
\end{proof}

From lemma \ref{l:compare-energie}, we obtain that
$$E_{g_2}(g_0) - E_{g_2}(f^*g_M) \geq \int_S (e_{g_1}(g_0) - e_{g_1}(f^*g_M) ) \Vol_{g_1} = E_{g_1}(g_0) - E_{g_1}(f^*g_M) = \F_{j_0, \rho} (X_1)~,$$
with equality if and only if $\norm{\Psi}_{g_1} \equiv 0$, that is, if $g_1$ is conformal to $g_2$.

On the other side, we have
$E_{g_2}(g_0) = \E(X_2, j_0)$ (since, by hypothesis, the identity map from $(S,g_2)$ to $(S,g_0)$ is harmonic) and $E_{g_2}(f^*g_M) \geq \E(X_2, \rho)$, from which we deduce that 
$$E_{g_2}(g_0) - E_{g_2}(f^*g_M) \leq \F_{j_0, \rho}(X_2)~.$$
Combining the two inequalities, we obtain that
$$\F_{j_0, \rho}(X_2) \geq \F_{j_0, \rho}(X_1)~,$$
with equality if and only if $X_1 = X_2$.

Now, if $X_1$ and $X_2$ are two critical points of $\F_{j_0, \rho}$, then by symmetry we must have $\F_{j_0, \rho}(X_2) = \F_{j_0, \rho}(X_1),$ and therefore $X_2 = X_1$. The functional $\F_{j_0, \rho}$ admits a unique critical point, and $j_0$ admits a unique pre-image by $\PPsi_\rho$. Thus $\PPsi_\rho$ is injective.

\newpage

\subsection{bi-continuity of $\PPsi$} \label{ss:InverseContinuous}

Recall that one has, by definition, 
$$(X, \PPsi_\rho(X)) = \PPhi^{-1}( X, \PPhi(X,\rho))~,$$
where $\PPhi^{-1}$ denotes the inverse of the map $\PPhi : \Teich(S) \times \Teich(S) \to \QD \Teich(S)$, which is a homeomorphism by Sampson--Hitchin--Wolf's theorem. Therefore, by proposition \ref{p:ContinuityEPhi}, the maps $\PPsi_\rho$ and $\PPsi$ are continuous.

Let us now prove that $\PPsi^{-1}$ is continuous. We saw that $\PPsi^{-1}(j,\rho)$ is the unique critical point of a proper function $\F_{j,\rho}$ on $\Teich(S)$ which depends continuously on $j$ and $\rho$. The continuity of $\PPsi^{-1}$ will follow from the fact that the functions $\F_{j,\rho}$ are in some sense \emph{locally uniformly proper}.

\begin{definition}
Let $X$ and $Y$ be two metric spaces, and $\left( F_y\right)_{y\in Y}$ a family of continuous functions from $X$ to $\R$ depending continuously on $y$ for the compact-open topology. We say that the family $\left(F_y \right)_{y\in Y}$ is \emph{uniformly proper} if for any $C\in\R$, there exists a compact subset $K$ of $X$ such that for all $y\in Y$ and all $x \in X \backslash K$, $F_y(x) > C$.

We say that the family $\left(F_y \right)_{y\in Y}$ is \emph{locally uniformly proper} if for any $y_0 \in Y$, there is a neighbourhood $U$ of $y_0$ such that the sub-family $\left(F_y \right)_{y\in U}$ is uniformly proper.
\end{definition}

\begin{prop} \label{p:UniformProp}
Let $X$ and $Y$ be two metric spaces and $\left( F_y\right)_{y\in Y}$ a locally uniformly proper family of continuous functions from $X$ to $\R$ depending continuously on $y$ (for the compact-open topology). Assume that each $F_y$ achieves its minimum at a unique point $x_m(y) \in X$. Then the function 
$$y \mapsto x_m(y)$$ is continuous.
\end{prop}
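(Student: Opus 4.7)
The plan is to fix $y_0 \in Y$ and show that for any sequence $y_n \to y_0$, we have $x_m(y_n) \to x_m(y_0)$. The first step is to use local uniform properness to confine the minimizers to a common compact set near $y_0$. Namely, pick a neighbourhood $U$ of $y_0$ on which $(F_y)_{y\in U}$ is uniformly proper. Since the evaluation $y\mapsto F_y(x_m(y_0))$ is continuous, it is bounded by some constant $C$ on a smaller neighbourhood $U' \subset U$. By minimality,
\[ F_y(x_m(y)) \leq F_y(x_m(y_0)) \leq C \quad \text{for all } y \in U'~. \]
Applying uniform properness to the level $C+1$ yields a compact set $K \subset X$ such that $F_y > C+1$ outside $K$ for all $y \in U$. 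In particular $x_m(y) \in K$ for every $y \in U'$.

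The second step is a standard subsequence argument. Take any sequence $y_n \to y_0$ in $U'$. Since the $x_m(y_n)$ lie in the compact set $K$, any subsequence has a further subsequence $x_m(y_{n_k})$ converging to some $x^* \in K$. I claim that $x^* = x_m(y_0)$. The compact-open topology on real-valued continuous functions (on a metric space) amounts to uniform convergence on compact subsets, so applied to the compact set $\{x^*\} \cup \{x_m(y_{n_k})\}_k$ it gives
\[ F_{y_{n_k}}(x_m(y_{n_k})) \;\xrightarrow[k\to\infty]{}\; F_{y_0}(x^*)~, \]
using also continuity of $F_{y_0}$. Similarly $F_{y_{n_k}}(x_m(y_0)) \to F_{y_0}(x_m(y_0))$. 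Passing to the limit in the minimality inequality
\[ F_{y_{n_k}}(x_m(y_{n_k})) \leq F_{y_{n_k}}(x_m(y_0)) \]
yields $F_{y_0}(x^*) \leq F_{y_0}(x_m(y_0))$, and the uniqueness of the minimizer of $F_{y_0}$ forces $x^* = x_m(y_0)$.

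Since every subsequence of $(x_m(y_n))$ admits a further subsequence converging to the same limit $x_m(y_0)$, the full sequence converges to $x_m(y_0)$. This proves the continuity of $y \mapsto x_m(y)$. No real obstacle arises here: once local uniform properness has been unpacked into the trapping of minimizers inside a compact set, the argument is the familiar ``relatively compact plus unique accumulation point'' scheme, and the only point requiring a bit of care is to invoke the compact-open continuity of $y \mapsto F_y$ on a compact set that contains both the candidate limit $x^*$ and the tail of the sequence $x_m(y_{n_k})$.
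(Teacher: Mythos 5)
Your proof is correct. It shares with the paper's argument the key first step: using local uniform properness, together with local boundedness of $y\mapsto F_y(x_m(y_0))$, to trap all nearby minimizers $x_m(y)$ inside a single compact set $K$. Where the two part ways is in how they conclude. The paper argues directly: it observes that, because $F_{y_0}$ is proper and has a unique minimizer, the sublevel sets $V_\epsilon=\{F_{y_0}<m(y_0)+\epsilon\}$ form a neighbourhood basis of $x_m(y_0)$, and then shows by a chain of $\epsilon/2$ estimates (uniform on $K$) that $x_m(y)\in V_\epsilon$ for $y$ close to $y_0$; no sequential compactness of $K$ is ever invoked. You instead run the standard ``relatively compact plus unique accumulation point'' scheme: extract a convergent subsequence $x_m(y_{n_k})\to x^*$ in $K$, pass to the limit in the minimality inequality using uniform convergence on the compact set $\{x^*\}\cup\{x_m(y_{n_k})\}_k$, and identify $x^*=x_m(y_0)$ by uniqueness of the minimizer. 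Your route spares you the small lemma that the sublevel sets of a proper function with unique minimum form a neighbourhood basis (which the paper uses implicitly), at the cost of relying on sequential compactness and on $Y$ being metrizable so that sequential continuity suffices -- both of which are available here, since $X$ and $Y$ are assumed to be metric spaces. Both arguments are complete; the paper's is marginally more quantitative and sequence-free, yours is marginally more self-contained.
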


\begin{proof}
Let us denote by $m(y) = F_{y}( x_m(y))$ the minimum value of $F_y$. Fix $y_0 \in Y$. Let $U$ be a neighbourhood of $y_0$ and $K$ a compact subset of $X$ such that for all $y\in U$ and all $x\in X \backslash K$, we have $$F_y(x) > m(y_0) + 1~.$$ For $\epsilon >0$, define 
$$V_\epsilon = \{x \in X \mid F_{y_0}(x) < m(y_0) + \epsilon \}~.$$ Since $F_{y_0}$ is proper and achieves its minimum at a single point $x_m(y_0)$, the family $\left(V_\epsilon \right)_{\epsilon > 0}$ forms a basis of neighbourhoods of $x_m(y_0)$. Let $U_\epsilon$ be a neighbourhood of $y_0$ included in $U$ such that for all $y\in U_\epsilon$ and all $x \in K$,
$$|F_y(x) - F_{y_0}(x)| < \frac{\epsilon}{2}~.$$
($U_\epsilon$ exists because the map $y \mapsto F_y$ is continuous for the compact-open topology.)
Since $x_m(y_0)$ is obviously in $K$, we have for all $y\in U_\epsilon$,
$$F_y(x_m(y_0)) < m(y_0) + \frac{\epsilon}{2}~,$$
hence the minimum value $m(y)$ of $F_y$ is smaller than $m(y_0) + \frac{\epsilon}{2}$. In particular, for $\epsilon < 2$, this minimum is achieved in $K$ (since outside $K$, we have $F_y \geq m(y_0) +1$). We thus have $x_m(y) \in K$, from which we deduce
\begin{displaymath}
F_{y_0}(x_m(y)) < F_y(x_m(y)) + \frac{\epsilon}{2} =  m(y) + \frac{\epsilon}{2} < m(y_0) + \epsilon~.
\end{displaymath}
We have thus proved that $x_m(y) \in V_\epsilon$ for all $y\in U_\epsilon$. Since $\left(V_\epsilon \right)_{\epsilon > 0}$ is a basis of neighbourhoods of $x_m(y_0)$, this proves that $y \mapsto x_m(y)$ is continuous at $y_0$.
\end{proof}

To prove the continuity of $\PPsi^{-1}$, we can apply proposition \ref{p:UniformProp} to the family $\F_{j,\rho}$ of functions on $\Teich(S)$ depending on the parameter $(j,\rho) \in \Dom(S,\Isom(M))$. The continuity of $(j,\rho) \mapsto \F_{j,\rho}$ comes from proposition \ref{p:ContinuityEPhi}. The only thing we need to check is thus that the family $$\left( \F_{j,\rho} \right)_{(j,\rho) \in \Dom(S, \Isom(M))}$$ is locally uniformly proper. But this follows easily from the continuity of the minimal Lipschitz constant. 
Indeed, let $(j_0,\rho_0)$ be a point in $\Dom(S,\Isom(M))$. We thus have $\Lip(j_0,\rho_0) < 1$. By continuity of the function $\Lip$, there exists a neighbourhood $U$ of $(j_0,\rho_0)$ and a $\lambda < 1$ such that for all $(j,\rho) \in U$, we have $\Lip(j,\rho) \leq \lambda$. By lemma \ref{l:properness}, we thus have
$$\F_{j,\rho}(Y) \geq \left( \frac{1- \lambda}{2}\right) \E(Y,j)$$
for all $(j,\rho) \in U$ and all $Y\in \Teich(S)$.
Since the function $Y \mapsto \E(Y,j)$ is proper by theorem~\ref{t:PropernessEnergy}, we obtain that the family $\left( \F_{j,\rho} \right)$ is uniformly proper on $U$. Hence it is locally uniformly proper.

By proposition~\ref{p:UniformProp}, we deduce that the unique minimum of $\F_{j,\rho}$ varies continuously with $(j,\rho)$. Since this minimum is precisely $\PPsi^{-1}(j,\rho)$, we proved that $\PPsi^{-1}$ is continuous. This concludes the proof of theorem \ref{t:MonThm}.

\appendix
\newpage

\appendixpage

\addappheadtotoc

This appendix is devoted to the proof of the following proposition (cf. proposition \ref{p:ContinuityLip}):
\begin{propstar}
Let $S$ be a closed oriented surface of negative Euler characteristic and $M$ a Riemannian $\CAT(-1)$ space. Then the function
\[ \function{\Lip}{\Teich(S)\times \Rep(S,\Isom(M))}{\R_+}{(j,\rho)}{\inf \left\{ \lambda \mid \exists f: \H^2 \to M\textrm{ $(j,\rho)$-equivariant and $\lambda$-Lipschitz} \right\} }\]
is continuous.
\end{propstar}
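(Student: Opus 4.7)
The function $\Lip$ is continuous if and only if it is both upper and lower semi-continuous, and I would treat these two properties separately.

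\textbf{Upper semi-continuity.} Fix $(j_0,\rho_0)$ and $\varepsilon > 0$, and pick a $(j_0,\rho_0)$-equivariant map $f_0 : \H^2 \to M$ with Lipschitz constant at most $\Lip(j_0,\rho_0) + \varepsilon$. For $j$ close to $j_0$ in $\Teich(S)$, there exists a $(j,j_0)$-equivariant bi-Lipschitz diffeomorphism $\varphi_j : \H^2 \to \H^2$ whose bi-Lipschitz constant tends to $1$ as $j \to j_0$ (built by lifting any smooth diffeomorphism of $S$ carrying one hyperbolic structure to the other). The map $f_0 \circ \varphi_j$ is then $(j,\rho_0)$-equivariant. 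To promote $(j,\rho_0)$-equivariance to $(j,\rho)$-equivariance for $\rho$ close to $\rho_0$, I would fix a compact fundamental domain $D$ for the $j$-action, define $f$ on $D$ by $f_0 \circ \varphi_j$, and extend equivariantly via $\rho$; a Lipschitz partition of unity on a collar neighbourhood of $\partial D$ interpolates between $\rho$- and $\rho_0$-extensions, at an additional Lipschitz cost controlled by $\max_{\gamma \in F} d_M\!\bigl(\rho(\gamma)\cdot y,\rho_0(\gamma)\cdot y\bigr)$ for $y$ in a bounded region of $M$ and $\gamma$ ranging over a finite generating set $F$. This goes to $0$ as $(j,\rho) \to (j_0,\rho_0)$, giving $\Lip(j,\rho) \leq \Lip(j_0,\rho_0) + 2\varepsilon$ in a neighbourhood.

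\textbf{Lower semi-continuity, non-parabolic case.} Suppose $(j_n,\rho_n) \to (j_0,\rho_0)$ and set $\mu = \liminf_n \Lip(j_n,\rho_n)$. Pick $(j_n,\rho_n)$-equivariant maps $f_n$ with Lipschitz constants at most $\Lip(j_n,\rho_n) + 1/n$, and compose on the source with $(j_0,j_n)$-equivariant bi-Lipschitz maps of constants approaching $1$ to obtain $(j_0,\rho_n)$-equivariant maps $\tilde f_n$ of Lipschitz constants converging to $\mu$. Assuming first that $\rho_0$ is not parabolic, the action of $\rho_0(\pi_1(S))$ on $M$ has no global fixed point in $\overline{M} = M \cup \partial_\infty M$; using standard facts about $\CAT(-1)$ isometries, one can pick conjugates in each $\Isom(M)$-orbit so that $\tilde f_n(x_0)$ lies in a fixed compact subset of $M$ for some base point $x_0 \in \H^2$. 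An equi-Lipschitz family that is bounded at a single point is equicontinuous and pointwise bounded, so Arzelà–Ascoli yields a subsequence converging locally uniformly to a $\mu$-Lipschitz map $f_\infty : \H^2 \to M$, which is $(j_0,\rho_0)$-equivariant by passing to the limit in $f_n(\gamma \cdot x) = \rho_n(\gamma)\cdot f_n(x)$. Hence $\Lip(j_0,\rho_0) \leq \mu$.

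\textbf{Lower semi-continuity, parabolic case, and the main obstacle.} The bulk of the work lies when $\rho_0$ fixes a point $\xi \in \partial_\infty M$, because then no $\Isom(M)$-conjugation keeps the images $\tilde f_n(x_0)$ in a compact set: minimizers can drift to $\xi$. To handle this, I would exploit the structure noted in Section \ref{ss:ParabolicRep}, namely that for a parabolic $\rho_0$ the translation-length function is the absolute value of a homomorphism $m_{\rho_0} : \pi_1(S) \to \R$. The strategy is to post-compose $\tilde f_n$ by a sequence of parabolic isometries fixing $\xi$ (which asymptotically centralize $\rho_n$ restricted to its parabolic part) in order to re-center the maps; this re-centering costs nothing in the Lipschitz constant. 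Two subcases then arise: either one can bring the $\tilde f_n$ back into a compact region (reducing to Arzelà–Ascoli as above), or they escape to the fixed horosphere direction, in which case their rescaled vertical components converge to an $m_{\rho_0}$-equivariant harmonic-type function $\tilde f_\infty : \H^2 \to \R$ of Lipschitz constant at most $\mu$. In either case one produces a $(j_0,\rho_0)$-equivariant candidate witnessing $\Lip(j_0,\rho_0) \leq \mu$. The main obstacle is precisely this loss of compactness in the parabolic case: one needs a careful normalization argument, together with the fact that post-composition with isometries preserves both equivariance (up to conjugation) and Lipschitz constants, to extract a meaningful limit from a potentially degenerating sequence.
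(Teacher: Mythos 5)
Your overall architecture (continuity in $j$ via nearly-isometric equivariant reparametrizations of $\H^2$, upper semi-continuity in $\rho$ by deforming a given equivariant map, lower semi-continuity via Arzel\`a--Ascoli with the parabolic case singled out as the obstruction) coincides with the paper's, and the first two steps plus the non-parabolic Ascoli argument are essentially correct as written. (One small remark: in the non-parabolic case no conjugation is needed at all --- if $\tilde f_n(x_0)$ accumulated at some $p\in\partial_\infty M$, the uniform bound on $d(\tilde f_n(x_0),\rho_n(\gamma)\cdot \tilde f_n(x_0))$ would force $\rho_0(\gamma)\cdot p=p$ for every $\gamma$, contradicting non-parabolicity; so boundedness is automatic.)

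The parabolic case, however, contains two genuine gaps, and they are precisely where the paper has to work. First, your re-centering uses ``a sequence of parabolic isometries fixing $\xi$''; in a general Riemannian $\CAT(-1)$ space such isometries need not exist ($\Isom(M)$ can be very small), and moreover the approximating representations $\rho_n$ do not fix $\xi$, so it is unclear what ``asymptotically centralize the parabolic part'' means. The paper instead re-centers using elements of the group itself: when the homomorphism $m_{\rho_0}$ is nonzero it picks $\gamma$ with $\rho_0(\gamma)$ hyperbolic and translates back by powers $\rho_n(\gamma)^{k_n}$ along the converging axes; the case $m_{\rho_0}\equiv 0$ is disposed of separately since then $\Lip(j_0,\rho_0)=0$. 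After this conjugation the limit representation $\rho'$ is generally \emph{not} $\rho_0$, only a representation with the same length spectrum, and one must still argue $\Lip(j_0,\rho')=\Lip(j_0,\rho_0)$; this goes through the length-spectrum characterization of parabolicity (lemma \ref{l:CaracterisationsRepresentationsReductibles}). Second, and more seriously, your final step ``one produces a $(j_0,\rho_0)$-equivariant candidate'' from an $m_{\rho_0}$-equivariant real-valued function of Lipschitz constant $\leq\mu$ is exactly the nontrivial inequality $\Lip(j_0,\rho_0)\leq\Lip(j_0,m_{\rho_0})$ of lemma \ref{l:ConstanteLipschitzRepReductible}: a lift through the horosphere fibration has an a priori much larger Lipschitz constant $C'$, and one must push it toward $\xi$ by the Busemann flow $F_{\xi,t}$, whose contraction estimate $d(F_{\xi,t}(x),F_{\xi,t}(y))\leq |B_{\xi,x}(y)|+e^{-t}d(x,y)$ (proposition \ref{p:ContractionBuseman}) is what brings the constant down to $C+C'e^{-t}$. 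Without this argument the reduction to the real-valued problem proves nothing about $\Lip(j_0,\rho_0)$, so as it stands the parabolic case is not closed.
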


The proof will proceed in three steps. First, we will use the continuity of Thurston's asymmetric distance on $\Teich(S)$ to show that $\Lip(j,\rho)$ is continuous in $j$. Then we will prove upper semi-continuity in $\rho$ with a general argument that allows to deform a $(j,\rho)$-equivariant map into a $(j,\rho')$-equivariant map for $\rho'$ close to $\rho$, without increasing too much the Lipschitz constant. Finally, we will prove lower semi-continuity by showing that a sequence of equi-Lipschitz and $(j,\rho_n)$-equivariant maps converges to a $(j,\rho)$-equivariant map when $\rho_n$ converges to $\rho$ as soon as $\rho$ is not parabolic. The tricky part of the proof will be to deal with lower semi-continuity at parabolic representations. This will require some preliminary work on isometry groups of $\CAT(-1)$-spaces.

The first section of this appendix will therefore be a miscellaneous recall of classical facts about $\CAT(-1)$ spaces and their isometries. In the second section, we will characterize parabolic representation by the additivity of their length spectrum. Finally, we will prove proposition \ref{p:ContinuityLip} in the third section.

\section{$\CAT(-1)$-spaces and their isometries}

Recall that a \emph{length space} is a metric space in which the distance between two points is defined as the infimum of the lengths of paths joining these two points. If $x_1$, $x_2$ and $x_3$ are three points of a length space, a triangle with vertices $x_1$, $x_2$ and $x_3$ (denoted $ \tau(x_1,x_2,x_3)$) is the union of three geodesic segments joining $x_1$ with $x_2$, $x_2$ with $x_3$ and $x_3$ with $x_1$.

Let $(M,g_M)$ be a complete simply connected Riemannian manifold of sectional curvature bounded above by $-1$. Then $M$ is in particular a $\CAT(-1)$ space, in the following sense:

\begin{definition} \label{d:CAT(-1)}
A length space $(M,d)$ is $\CAT(-1)$ if for any triangle $\tau(x_1,x_2,x_3)$ in $M$, if $\tau(\bar{x}_1,\bar{x}_2,\bar{x}_3)$ is a triangle in $\H^2$ whose edges have the same length as those of $\tau(x_1,x_2,x_3)$, then the map from $\tau(\bar{x}_1,\bar{x}_2,\bar{x}_3)$ to $M$ that sends $\bar{x}_i$ to $x_i$ and that is geodesic on each edge is $1$-Lipschitz.
\end{definition}

Lots of properties of the hyperbolic plane can be extended to any $\CAT(-1)$ space by simple triangle comparison. We did not find a reference specific to $\CAT(-1)$ geometry, but the facts we state here are usually particular cases of more general properties of Gromov-hyperbolic spaces or $\CAT(0)$ spaces. One can consult \cite{GhysDeLaHarpe} or \cite{BridsonHaefliger} for more information.

\subsection*{Hyperbolic angles.}
Let $M$ be a $\CAT(-1)$ space and $x$, $y$, $z$ three points in $M$. We will denote by
\[ \anglehyp{y,x,z} \]
the angle at the vertex $\bar{x}$ of a hyperbolic triangle $\tau( \bar{x}, \bar{y}, \bar{z} )$ whose edges have the same length as those of $\tau(x,y,z)$.

\begin{prop} \label{p:InegaliteTriangulaireAngles}
Hyperbolic angles satisfy a triangular inequality: if $x$, $y$, $z$ and $t$ are four points in $M$, then
\[ \anglehyp{y,x,t} \leq \anglehyp{y,x,z} + \anglehyp{z,x,t} ~ .\]
\end{prop}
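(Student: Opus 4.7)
The plan is to reduce the triangular inequality to a planar comparison in $\H^2$ via the $\CAT(-1)$ inequality applied to the two subtriangles sharing the edge $[x,z]$. Write $\alpha = \anglehyp{y,x,z}$, $\beta = \anglehyp{z,x,t}$, and $\gamma = \anglehyp{y,x,t}$. If $\alpha + \beta \geq \pi$ the inequality is immediate, since every comparison angle lies in $[0,\pi]$, so I may assume $\alpha+\beta<\pi$. Inside $\H^2$, I build the two comparison triangles $\tau(\bar x,\bar y,\bar z)$ for $\tau(x,y,z)$ and $\tau(\bar x,\bar z,\bar t)$ for $\tau(x,z,t)$, and glue them isometrically along $[\bar x,\bar z]$ with $\bar y$ and $\bar t$ on opposite sides of this segment. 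By construction the angle at $\bar x$ between the rays $\bar x\bar y$ and $\bar x\bar t$ in this planar figure equals $\alpha+\beta$, while the hyperbolic distances $d(\bar x,\bar y)$ and $d(\bar x,\bar t)$ equal $d(x,y)$ and $d(x,t)$ respectively.

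The key reduction is the inequality $d(y,t) \leq d(\bar y,\bar t)$ in this planar figure. Granting it, consider the hyperbolic comparison triangle for $\tau(x,y,t)$: its angle at the vertex corresponding to $x$ is exactly $\gamma$, and the side opposite this vertex has length $d(y,t)$. By the hyperbolic law of cosines, once the two sides adjacent to a vertex are fixed, the length of the opposite side is a strictly increasing function of the angle at that vertex. Hence $d(y,t) \leq d(\bar y,\bar t)$ forces $\gamma \leq \alpha+\beta$, which is the desired conclusion.

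To produce the inequality $d(y,t)\leq d(\bar y, \bar t)$, the natural device is to pick a point $\bar w$ on $[\bar x,\bar z]$ and let $w$ be the corresponding point on $[x,z] \subset M$ at distance $d(\bar x,\bar w)$ from $x$; the $\CAT(-1)$ condition applied to each of the subtriangles $\tau(x,y,z)$ and $\tau(x,z,t)$ yields $d(y,w)\leq d(\bar y,\bar w)$ and $d(w,t)\leq d(\bar w,\bar t)$, so the triangle inequality in $M$ gives $d(y,t) \leq d(\bar y,\bar w) + d(\bar w,\bar t)$. Choosing $\bar w$ to be the intersection of the planar geodesic $[\bar y,\bar t]$ with $[\bar x,\bar z]$ collapses the right-hand side to $d(\bar y,\bar t)$ and finishes the argument. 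The main obstacle is the degenerate configuration in which $[\bar y,\bar t]$ fails to meet $[\bar x,\bar z]$ in its interior, which occurs precisely when the comparison angles at $\bar z$ in the two subtriangles sum to more than $\pi$ (so that the planar quadrilateral $\bar x\bar y\bar z\bar t$ is reflex at $\bar z$); I would handle this either by subdividing one of the subtriangles along a chord issued from $\bar z$ to restore a convex gluing configuration to which the argument above applies, or by invoking Alexandrov's patchwork lemma for comparison quadrilaterals in $\H^2$.
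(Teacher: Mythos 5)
The paper does not actually prove this proposition: it is listed among standard comparison facts with a pointer to \cite{GhysDeLaHarpe} and \cite{BridsonHaefliger}, so there is no argument of the author's to compare yours against. Your main case is the standard gluing argument and is correct as far as it goes: when $[\bar y,\bar t]$ meets $[\bar x,\bar z]$ at $\bar w$, the two $\CAT(-1)$ comparisons give $d(y,t)\le d(\bar y,\bar w)+d(\bar w,\bar t)=d(\bar y,\bar t)$, and monotonicity of the hyperbolic law of cosines then yields $\anglehyp{y,x,t}\le\alpha+\beta$.

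The deferred degenerate case, however, is a genuine gap, and neither of your proposed fixes can close it. Alexandrov's lemma points the wrong way there: when the comparison angles at $\bar z$ sum to more than $\pi$, it produces a straightened triangle whose third side is $d(y,z)+d(z,t)\ge d(y,t)$ and whose angle at $\bar x$ is \emph{at least} $\alpha+\beta$, which gives no upper bound on $\anglehyp{y,x,t}$; and subdividing along a chord from $\bar z$ cannot help, because the three comparison angles are functions of the six pairwise distances alone. Worse, the inequality actually fails in exactly this configuration. In a simplicial tree (a $\CAT(-1)$ length space in the sense of Definition \ref{d:CAT(-1)}), take a vertex $z$ of degree three, a point $x$ at distance $R$ from $z$ along one edge, and points $y$, $t$ at distance $1$ from $z$ along the two other edges: then $d(x,y)=d(x,t)=R+1=d(x,z)+d(z,y)$, so the comparison triangles for $(x,y,z)$ and $(x,z,t)$ are degenerate and $\anglehyp{y,x,z}=\anglehyp{z,x,t}=0$, while $d(y,t)=2$ is strictly between $|d(x,y)-d(x,t)|=0$ and $d(x,y)+d(x,t)$, so $\anglehyp{y,x,t}>0$. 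A smooth version lives in the hyperbolic plane of constant curvature $-K$ with $K$ large (a Riemannian $\CAT(-1)$ space): put $z$ on the geodesic segment $[x,y]$ and $t$ at the same distance from $x$ as $y$ but at a very small genuine angle; as $K\to\infty$ the comparison angles converge to the Euclidean comparison angles of the fixed distance data, which one computes to violate the inequality by a bounded factor. So no completion of your Case~2 exists for the statement as written. What is true in general is the triangle inequality for the Alexandrov (upper) angle, obtained by letting the three outer points tend to $x$ along the geodesics; a correct treatment has to either prove that weaker statement or identify additional hypotheses under which the fixed-scale comparison angles genuinely satisfy the inequality.
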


\begin{prop} \label{p:AnglesGrandTriangle}
Let $\tau_n$ be a sequence of triangles in $M$ such that the lengths of the edges of $\tau_n$ go to $+\infty$ and such that the hyperbolic angles at the vertices converge. Then at least two of these angles go to $0$.
\end{prop}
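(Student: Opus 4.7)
Since hyperbolic angles of a triangle in $M$ are by definition the vertex angles of a comparison triangle in $\H^2$ with the same side lengths, the statement reduces to a purely two-dimensional assertion: for a sequence of hyperbolic triangles in $\H^2$ whose three side lengths all tend to $+\infty$, at least two of the vertex angles tend to $0$. The plan is to argue this by contradiction, using only the hyperbolic law of cosines.

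Label the sides $a_n, b_n, c_n$ and the opposite angles $A_n, B_n, C_n$ (all going to $+\infty$ by hypothesis), and suppose for contradiction that, after extracting a subsequence, $A_n$ and $B_n$ both stay bounded away from $0$, say $A_n \geq \alpha > 0$ and $B_n \geq \beta > 0$. Write the hyperbolic law of cosines in the form
\[
\cosh a_n \;=\; \cosh b_n \cosh c_n\bigl(1 - \tanh b_n \tanh c_n \cos A_n\bigr).
\]
Since $b_n, c_n \to +\infty$, one has $\tanh b_n \tanh c_n \to 1$, so the parenthesised factor is eventually bounded below by $\frac{1}{2}(1-\cos\alpha) > 0$. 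Using $\cosh t \asymp \tfrac12 e^t$ for large $t$, this yields a constant $C_\alpha$ (depending only on $\alpha$) such that
\[
a_n \;\geq\; b_n + c_n - C_\alpha
\]
for all sufficiently large $n$. Applying the same reasoning with the roles of $A$ and $B$ interchanged gives a constant $C_\beta$ with $b_n \geq a_n + c_n - C_\beta$.

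Adding the two inequalities produces $0 \geq 2c_n - C_\alpha - C_\beta$, i.e.\ $c_n$ stays bounded, contradicting $c_n \to +\infty$. Hence at most one of $A_n, B_n, C_n$ can fail to converge to $0$, which is exactly the claim. The main (indeed only) point to be careful about is the uniformity of the asymptotic $\cosh b_n \cosh c_n \sim \sinh b_n \sinh c_n$ when combined with the lower bound on $1-\cos A_n$; everything else is bookkeeping. Note that no curvature-comparison argument is needed here, since the statement has already been reduced via the comparison triangles to a question in $\H^2$ alone.
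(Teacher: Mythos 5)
The paper does not actually prove Proposition \ref{p:AnglesGrandTriangle}: it is listed among the ``classical facts'' about $\CAT(-1)$ spaces that are quoted with a pointer to general references (Ghys--de la Harpe, Bridson--Haefliger), so there is no in-text argument to compare against. Your proof is correct and complete. The key observation --- that the hyperbolic angles are \emph{by definition} the angles of a comparison triangle in $\H^2$ with the same side lengths, so the whole statement is a fact about hyperbolic triangles --- is exactly right, and it makes the proposition independent of any curvature comparison in $M$ itself. The law-of-cosines computation then works as you say: writing $\cosh a_n = \cosh b_n \cosh c_n\,(1-\tanh b_n\tanh c_n\cos A_n)$ and noting that the last factor is bounded below by $\min(1,\,1-\cos\alpha)\geq \tfrac12(1-\cos\alpha)$ whenever $A_n\geq\alpha$ (for \emph{all} $n$, in fact --- you do not even need ``eventually''), the bounds $\tfrac12 e^t\leq\cosh t\leq e^t$ give $a_n\geq b_n+c_n-C_\alpha$; doing the same at a second vertex and adding the two inequalities forces the third side to stay bounded, a contradiction. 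Since the angles are assumed to converge, ``bounded away from $0$ along a subsequence'' is the same as ``limit nonzero'', so the contrapositive you prove is precisely the claim. Two cosmetic points: the parenthetical ``(all going to $+\infty$ by hypothesis)'' is attached to the angles $A_n,B_n,C_n$ when it should refer to the sides $a_n,b_n,c_n$; and the worry you flag at the end about the uniformity of $\cosh b_n\cosh c_n\sim\sinh b_n\sinh c_n$ is not actually needed once you use the $\tanh$ form of the identity, since $0<\tanh b_n\tanh c_n<1$ already yields the uniform lower bound on the parenthesised factor.
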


\subsection*{Boundary at infinity} \label{as:CAT(-1)}
A $\CAT(-1)$ space $M$ is in particular \emph{hyperbolic} in the sense of Gromov. One can thus define the \emph{boundary at infinity} of $M$ as the quotient of the space of geodesic rays going at speed $1$ by the equivalence relation that identifies two rays $\gamma_1$ and $\gamma_2$ if and only if the distance between $\gamma_1(t)$ and $\gamma_2(t)$ is bounded when $t$ goes to $+\infty$. This boundary at infinity, denoted $\partial_\infty M$, gives a compactification of $M$. When $M$ is a Riemannian $\CAT(-1)$ space, $M \cup \partial_\infty M$ is homeomorphic to a closed ball. Every geodesic $\gamma: \R \to M$ converges to a point $\gamma_+$ in $\partial_\infty M$ when $t$ goes to $+\infty$ and a point $\gamma_-$ when $t$ goes to $-\infty$. Conversely, any two points in $\partial_\infty M$ are the extremities of a unique geodesic in $M$.

The hyperbolic angles extend to the boundary, meaning that if $x$ is a point in $M$ and $(y_n)$ and $(z_n)$ two sequences in $M$ converging respectively to $p$ and $q$ in $\partial_\infty M$, then $\anglehyp{y_n, x, z_n}$ converges to a value that only depends on $x$, $p$ and $q$. We will denote it $\angle_\infty (p,x,q)$. The function
\[\function{d_x}{\partial_\infty M \times \partial_\infty M}{\R_+}{(p,q)}{\angle_\infty(p,x,q)}\]
is a distance on $\partial_\infty M$.

\subsection*{Busemann functions.}
Fix a point $p$ in $\partial_\infty M$ and let $x_0$ and $x$ be two points in $M$. Consider the geodesic ray $\gamma: \R_+ \to M$ starting at $x$ and pointing toward $p$. Then 
\[d(x_0, \gamma(t)) - t\] converges to a number denoted $B_{p,x_0}(x)$. By triangular inequality, we clearly have
\begin{equation} \label{eq:BusemanVSDistance}
|B_{p,x_0}(x) | \leq d(x_0,x) ~ .
\end{equation}
The function $B_{p,x_0}$ is called a \emph{Busemann function}. This function only depends on $x_0$ up to a translation. Indeed, given $x_0$, $x_1$ and $x$ in $M$, we have
\begin{equation} \label{eq:IdentiteBuseman}
B_{p,x_0}(x) = B_{p,x_0}(x_1) + B_{p,x_1}(x) ~ .
\end{equation}

The level sets of the Busemann function $B_{p,x_0}$ are called \emph{horospheres} tangent to $p$. When $M$ is a Riemannian  $\CAT(-1)$ space of dimension $n$, horospheres are smooth hypersurfaces diffeomorphic to $\R^{n-1}$. They form a foliation orthogonal to the $1$-dimensional foliation consisting of geodesics pointing toward $p$.

For every $t \in \R_+$, we will denote by $F_{p,t}$ the map sending a point $x$ in $M$ to the point $\gamma_{x,p}(t)$, where $\gamma_{x,p}$ is the geodesic ray starting at $x$ and pointing toward $p$. The map $F_{p,t}$ preserves the foliation of $M$ by horospheres tangent to $p$ and induces a surjection from $B_{p,x_0}^{-1}(a)$ to $B_{p,x_0}^{-1}(a+t)$ for any $a\in \R$ (figure \ref{fig:ContractionBusemann}). When $M$ is Riemannian, this surjection is a diffeomorphism.

\begin{figure} 
\begin{center}
\includegraphics[width=12cm]{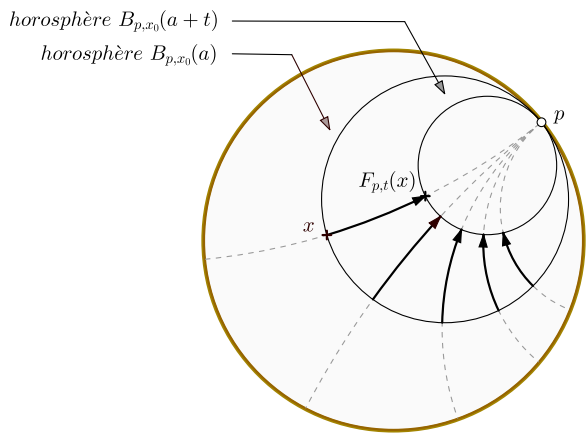}
\caption{The map $F_{p,t}$ sending the horosphere $B_{p,x_0}^{-1}(a)$ to the horosphere $B_{p,x_0}^{-1}(a+t)$.}
\label{fig:ContractionBusemann}
\end{center}
\end{figure}

\begin{prop} \label{p:ContractionBuseman}
The function $F_{p,t}$ is $1$-Lipschitz and satisfies the following inequality:
\[ |B_{p,x}(y)| \leq d\left( F_{p,t}(x), F_{p,t}(y)\right) \leq | B_{p,x}(y) | + e^{-t}  d(x,y) ~ .\]
In particular, the distance between $F_{p,t}(x)$ and $F_{p,t}(y)$ converges to $|B_{p,x}(y)|$ when $t$ goes to $+\infty$.
\end{prop}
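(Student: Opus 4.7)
The plan is to prove the three assertions (the $1$-Lipschitz property, the lower bound, and the upper bound) separately, each exploiting a different comparison property of $\CAT(-1)$ spaces.

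For the $1$-Lipschitz property of $F_{p,t}$, I would argue from $\CAT(0)$-convexity. Let $\gamma_x, \gamma_y \colon [0, \infty) \to M$ be the unit-speed geodesic rays from $x$ and $y$ toward $p$. By the $\CAT(0)$ property, the function $s \mapsto d(\gamma_x(s), \gamma_y(s))$ is convex on $[0, \infty)$. It is also bounded, since $\gamma_x$ and $\gamma_y$ are asymptotic rays (having the same point at infinity is equivalent to bounded distance). A convex function on $[0, \infty)$ which is bounded above must be non-increasing, so $d(F_{p, t}(x), F_{p, t}(y)) \leq d(F_{p, 0}(x), F_{p, 0}(y)) = d(x, y)$.

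The lower bound $|B_{p, x}(y)| \leq d(F_{p, t}(x), F_{p, t}(y))$ follows from a direct use of the Busemann--Lipschitz estimate (\ref{eq:BusemanVSDistance}) applied with basepoint $F_{p, t}(x)$. Indeed, the cocycle identity (\ref{eq:IdentiteBuseman}), combined with the elementary flow identity $B_{p, x_0}(F_{p, s}(z)) = B_{p, x_0}(z) + s$ (immediate from the definition upon reparametrizing the ray), yields $B_{p, F_{p, t}(x)}(F_{p, t}(y)) = B_{p, x}(y)$, and (\ref{eq:BusemanVSDistance}) then gives $|B_{p, x}(y)| \leq d(F_{p, t}(x), F_{p, t}(y))$.

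The upper bound is the heart of the proposition and requires $\CAT(-1)$-comparison with a triangle in $\H^2$ that degenerates to an ideal one. Fix $T \gg t$ and set $q_T := F_{p, T}(x)$. Consider the triangle with vertices $x, y, q_T$ in $M$, whose side lengths are $d(x, y)$, $T$, and $d(y, q_T)$, and let $\bar x, \bar y, \bar q_T$ be a comparison triangle in $\H^2$. Let $\bar x_t \in [\bar x, \bar q_T]$ be the point at distance $t$ from $\bar x$, and let $\bar y_t^T \in [\bar y, \bar q_T]$ be the point at distance $t$ from $\bar y$. By $\CAT(-1)$, $d(F_{p, t}(x), y_t^T) \leq d_{\H^2}(\bar x_t, \bar y_t^T)$, where $y_t^T$ is the point at distance $t$ from $y$ on $[y, q_T]$. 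As $T \to \infty$, the segment $[y, q_T]$ converges to the geodesic ray from $y$ to $p$, so $y_t^T \to F_{p, t}(y)$. On the $\H^2$ side, the classical identity $d(y, q_T) - T \to B_{p, x}(y)$ (for sequences tending to a point at infinity) implies that the comparison triangles converge, after passing to a subsequence, to an ideal triangle with vertex $\bar p = \lim \bar q_T \in \partial_\infty \H^2$ satisfying $B_{\bar p, \bar x}(\bar y) = B_{p, x}(y)$ and $d_{\H^2}(\bar x, \bar y) = d(x, y)$.

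The proof is then reduced to an explicit estimate in $\H^2$. In the upper half-plane model with $\bar p = \infty$, $\bar x = (0, 1)$, and $\bar y = (a, e^B)$ (where $B = B_{p, x}(y)$ and $|a|$ is determined by matching $d_{\H^2}(\bar x, \bar y)$ to $d(x, y)$), the images under $F_{\bar p, t}$ are $(0, e^t)$ and $(a, e^{B+t})$, and the formula $\sinh(d_{\H^2}(z, w)/2) = |z - w|/(2 \sqrt{\Im z \, \Im w})$ allows one to compute $d_{\H^2}(F_{\bar p, t}(\bar x), F_{\bar p, t}(\bar y))$ in closed form and extract the required upper bound via the elementary inequality $\sqrt{u^2 + v^2} \leq |u| + |v|$ and concavity of $\sinh^{-1}$. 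The main obstacle is to carry out this explicit estimate cleanly while controlling the error in the $T \to \infty$ limit of the comparison argument. The asymptotic statement $d(F_{p, t}(x), F_{p, t}(y)) \to |B_{p, x}(y)|$ as $t \to \infty$ then follows by sandwiching between the upper and lower bounds.
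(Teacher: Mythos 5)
The paper gives no proof of this proposition: it is listed among the classical comparison facts about $\CAT(-1)$ spaces, with a pointer to Ghys--de la Harpe and Bridson--Haefliger, so there is nothing to compare your argument against step by step. Your first two parts are correct and well organized: convexity of $s\mapsto d(\gamma_x(s),\gamma_y(s))$ in a $\CAT(0)$ space, together with boundedness for asymptotic rays, does give that $F_{p,t}$ is $1$-Lipschitz; and the identity $B_{p,F_{p,t}(x)}(F_{p,t}(y))=B_{p,x}(y)$, obtained from the cocycle relation (\ref{eq:IdentiteBuseman}) and the flow identity, combined with (\ref{eq:BusemanVSDistance}), does give the lower bound. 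The reduction of the upper bound to an explicit computation in $\H^2$ via comparison triangles degenerating to an ideal configuration is also a sound scheme.

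The gap is in the very last step, and it is not a technicality you can patch: the stated upper bound is false, and your own computation reveals it. With $\bar p=\infty$, $\bar x=(0,1)$, $\bar y=(a,e^B)$, one finds
\[\sinh\left(\tfrac{1}{2}\,d\left(F_{\bar p,t}(\bar x),F_{\bar p,t}(\bar y)\right)\right)=\frac{\sqrt{a^2e^{-2t}+(e^B-1)^2}}{2e^{B/2}}\leq \sinh\left(\tfrac{|B|}{2}\right)+e^{-t}\sinh\left(\tfrac{d(\bar x,\bar y)}{2}\right),\]
and subadditivity of $\sinh^{-1}$ (concave, vanishing at $0$) then yields $d(F_{\bar p,t}(\bar x),F_{\bar p,t}(\bar y))\leq |B|+2e^{-t}\sinh(d(\bar x,\bar y)/2)$. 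But you cannot upgrade $2e^{-t}\sinh(d/2)$ to $e^{-t}d$: concavity of $\sinh^{-1}$ gives $\sinh^{-1}(e^{-t}z)\geq e^{-t}\sinh^{-1}(z)$, i.e.\ it works \emph{against} you at this point. Indeed the inequality of the proposition already fails in $\H^2$ for two points on a common horosphere, where $B=0$ and $d(F_{p,t}(x),F_{p,t}(y))=2\sinh^{-1}(e^{-t}\sinh(d(x,y)/2))$: for $d(x,y)=10$ and $t=1$ this is about $8$, while $e^{-t}d(x,y)\approx 3.7$ (for widely separated points the contraction is roughly $d(x,y)-2t$, linear rather than exponential). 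What your method actually proves is the bound $|B_{p,x}(y)|+2e^{-t}\sinh(d(x,y)/2)$; this still gives the convergence statement ``in particular'' and is all the paper ever uses --- in the proof of lemma \ref{l:ConstanteLipschitzRepReductible} only the $t\to\infty$ limit and a Lipschitz estimate on nearby points are needed --- so you should state and prove that corrected version instead of chasing the one printed here.
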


\subsection*{Isometries.}
Isometries of $M$ extend to homeomorphisms of $M \cup \partial_\infty M$. They can have three different types of dynamics, depending on their translation length.

Let $g$ be an isometry of $M$. If $l(g)= 0$, then $g$ fixes either a point in $M$ --  in that case $g$ is called \emph{elliptic} -- or a  single point in $\partial_\infty M$ --  it is then called \emph{parabolic}. An elliptic isometry preseves the distance to a fixed point and has bounded orbits, while a parabolic isometry preserves the horospheres tangent to its fixed point.

When $l(g) > 0$, the isometry $g$ is called \emph{hyperbolic}. In that case, the set of points $x$ in $M$ for which $d(x,g \cdot x) = l(g)$ forms a geodesic in $M$ called the axis of $g$. This axis is preserved by $g$, and $g$ acts along its axis as a translation of length $l(g)$. The extremities of the axis in $\partial_\infty M$ are fixed by $g$.

\begin{prop} \label{p:IsometrieBuseman}
Let $g$ be an isometry of $M$ fixing a point $p$ in $\partial_\infty M$. Then the number
\[ B_{p,x}(g \cdot x) \]
does not depend on $x$ and verifies
\[ \left| B_{p,x}(g\cdot x) \right| = l(g) ~ .\]
\end{prop}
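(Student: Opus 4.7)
The plan is to first show that the quantity $c := B_{p,x}(g \cdot x)$ does not depend on $x$, and then establish the two inequalities $|c| \leq l(g)$ and $|c| \geq l(g)$ separately, using the cocycle relation \eqref{eq:IdentiteBuseman}, the bound \eqref{eq:BusemanVSDistance}, and the asymptotic behavior of the contraction $F_{p,t}$ provided by Proposition \ref{p:ContractionBuseman}.

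For independence of $x$, I would first observe that since $g$ is an isometry fixing $p$ at infinity and geodesic rays toward $p$ are unique, $g$ sends the ray from $y$ to $p$ to the ray from $g\cdot y$ to $p$. From the definition of a Busemann function as a renormalized limit of distances, this yields the equivariance $B_{p,g\cdot x_0}(g\cdot y) = B_{p,x_0}(y)$ for all $x_0,y$. Applying \eqref{eq:IdentiteBuseman} to rewrite $B_{p,y}(g\cdot y)$ as $B_{p,y}(g\cdot x) + B_{p,g\cdot x}(g\cdot y)$, then using equivariance on the second term and \eqref{eq:IdentiteBuseman} again on the first term to express $B_{p,y}(g\cdot x)$ as $B_{p,x}(g\cdot x) - B_{p,x}(y)$, yields $B_{p,y}(g\cdot y) = B_{p,x}(g\cdot x)$.

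For the upper bound $|c| \leq l(g)$, the inequality \eqref{eq:BusemanVSDistance} gives $|c| = |B_{p,x}(g\cdot x)| \leq d(x,g\cdot x)$. Since $c$ is independent of $x$, taking the infimum over all $x\in M$ immediately yields $|c| \leq l(g)$. For the lower bound $l(g) \leq |c|$, the key observation is that since $g$ fixes $p$ and permutes the geodesic rays toward $p$ among themselves, $g$ commutes with the flow $F_{p,t}$. Hence
\[
d\bigl(F_{p,t}(x),\, g\cdot F_{p,t}(x)\bigr) \;=\; d\bigl(F_{p,t}(x),\, F_{p,t}(g\cdot x)\bigr),
\]
which by Proposition \ref{p:ContractionBuseman} converges to $|B_{p,x}(g\cdot x)| = |c|$ as $t\to +\infty$. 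Since $l(g) \leq d(y,g\cdot y)$ for every $y \in M$, applying this with $y = F_{p,t}(x)$ and letting $t \to \infty$ gives $l(g) \leq |c|$.

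The only subtle points are the equivariance identity for Busemann functions and the commutation $g\circ F_{p,t} = F_{p,t} \circ g$; both rest on the same elementary fact that an isometry of $M$ fixing $p \in \partial_\infty M$ sends the unique geodesic ray from $y$ to $p$ onto the unique geodesic ray from $g\cdot y$ to $p$. Once this is in place, the proof is a short computation combining the cocycle identity, the inequality \eqref{eq:BusemanVSDistance}, and the asymptotic contraction property of Proposition \ref{p:ContractionBuseman}.
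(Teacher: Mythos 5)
Your proof is correct. Note that the paper itself gives no proof of this proposition: it is one of the background facts about $\CAT(-1)$ spaces stated without argument in the appendix (the author remarks that these are special cases of standard results on Gromov-hyperbolic or $\CAT(0)$ spaces). Your derivation is a clean, self-contained justification using only the tools the paper does record: the equivariance $B_{p,g\cdot x_0}(g\cdot y)=B_{p,x_0}(y)$ and the commutation $g\circ F_{p,t}=F_{p,t}\circ g$ both reduce, as you say, to the fact that an isometry fixing $p$ maps the unique ray from $y$ to $p$ onto the unique ray from $g\cdot y$ to $p$; the independence of $x$ then follows from the cocycle identity \eqref{eq:IdentiteBuseman} together with the antisymmetry $B_{p,y}(x)=-B_{p,x}(y)$ (itself a consequence of the cocycle identity and $B_{p,x}(x)=0$); the upper bound $|c|\leq l(g)$ follows from \eqref{eq:BusemanVSDistance} by taking the infimum over $x$, which is legitimate precisely because $c$ is $x$-independent; and the lower bound follows from applying $l(g)\leq d\bigl(F_{p,t}(x),g\cdot F_{p,t}(x)\bigr)$ and letting $t\to+\infty$ via Proposition \ref{p:ContractionBuseman}. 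All steps check out, and the argument correctly avoids assuming that the infimum defining $l(g)$ is attained.
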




\subsection*{Properties of hyperbolic isometries.}
Let us finally state a few properties of hyperbolic isometries that will be useful in the next section.
The first one asserts that elements of $M$ that ``do not move too much'' under the action of a hyperbolic isometry $g$ must be close to the axis of $g$.
\begin{prop} \label{p:DeplacementIsomHyperbolique}
For any $\epsilon>0$ and any constant $k$, there exists a constant $C(\epsilon,k)$ such that if $g$ is an isometry of $M$ such that $l(g) \geq \epsilon$ and $x$ a point in $M$ such that 
\[d(x,g\cdot x) \leq k ~ ,\]
then
\[d(x, \axe(g))\leq C(\epsilon,k) ~ .\]
\end{prop}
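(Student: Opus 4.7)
The plan is to reduce the statement to a quantitative $\CAT(-1)$ analogue of the hyperbolic Saccheri formula, which captures the exponential growth of $d(x, g\cdot x)$ as $x$ moves away from $\ell := \axe(g)$. Let $y$ denote the closest-point projection of $x$ onto $\ell$ (unique by $\CAT(0)$ convexity), and set $r = d(x,y) = d(x,\ell)$, $l = l(g)$. A first easy step is to bound $l \leq k$: applying Proposition \ref{p:IsometrieBuseman} to a fixed point $p$ of $g$ on $\partial_\infty M$ gives $l = |B_{p,x}(g\cdot x)| \leq d(x, g\cdot x) \leq k$, so $l \in [\epsilon, k]$.

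The key estimate I would establish is the Saccheri-type lower bound
\[
d(x, g\cdot x) \;\geq\; 2\,\mathrm{argsinh}\!\bigl(\cosh(r)\,\sinh(l/2)\bigr),
\]
which is the summit length of a hyperbolic Saccheri quadrilateral with legs $r$ and base $l$. Given this, the conclusion is immediate: from $d(x, g\cdot x) \leq k$ and $l \geq \epsilon$ one obtains $\sinh(k/2) \geq \cosh(r)\sinh(\epsilon/2)$, hence $r \leq \argcosh\!\bigl(\sinh(k/2)/\sinh(\epsilon/2)\bigr) =: C(\epsilon, k)$. To prove the Saccheri-type estimate, the key input is that $y$ and $gy$ are the orthogonal feet of $x$ and $g\cdot x$ on $\ell$, so the Alexandrov angles there are $\geq \pi/2$; by the $\CAT(-1)$ hypothesis these transfer to the hyperbolic comparison angles $\anglehyp{x, y, gy} \geq \pi/2$ and $\anglehyp{g\cdot x, gy, y} \geq \pi/2$. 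One then glues the comparison hyperbolic triangles of $\tau(x, y, gy)$ and $\tau(g\cdot x, gy, y)$ along their common side of length $l$ (on opposite sides in $\H^2$), and a monotonicity argument in $\H^2$ shows that the resulting diagonal, which bounds $d(x, g\cdot x)$ from below by a Reshetnyak-type gluing, is at least the Saccheri summit $2\,\mathrm{argsinh}(\cosh(r)\sinh(l/2))$.

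The main obstacle is this gluing-and-monotonicity step: the argument must be phrased purely metrically, since in dimension $\geq 3$ the axis $\ell$ does not separate $M$ and ``opposite sides'' has no intrinsic meaning. If the Reshetnyak gluing proves delicate, an elementary workaround is to use Proposition \ref{p:InegaliteTriangulaireAngles} directly: bound
\[
\anglehyp{x, y, g\cdot x} \;\geq\; \anglehyp{x, y, gy} - \anglehyp{gy, y, g\cdot x} \;\geq\; \tfrac{\pi}{2} - \delta_0,
\]
where $\delta_0 = \delta_0(\epsilon, k) < \pi/2$ is computed from the hyperbolic law of cosines in the comparison triangle of $\tau(y, gy, g\cdot x)$, whose side lengths are $l \in [\epsilon, k]$, $r$, and $d(y, g\cdot x) \in [r-k, r+k]$. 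Plugging this lower bound into the hyperbolic law of cosines for the comparison triangle of $\tau(x, y, g\cdot x)$ forces $\cosh(d(x, g\cdot x))$ to grow at least like $(1-\cos\delta_0)\sinh^2(r)$ up to bounded error, which against $d(x, g\cdot x) \leq k$ yields the required bound $r \leq C(\epsilon, k)$.
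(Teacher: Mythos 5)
The paper never actually proves this proposition: it is listed among the standard comparison facts about $\CAT(-1)$ spaces (with a pointer to the general $\CAT(0)$/Gromov-hyperbolic literature), and the only hint given is the remark that an explicit $C(\epsilon,k)$ could be obtained ``by comparing with the case of $\H^2$''. Your argument is precisely that comparison, so it supplies what the paper omits, and the skeleton is sound: project $x$ to $y$ on the axis, observe $l(g)\in[\epsilon,k]$, use the projection property to get comparison angles $\geq\pi/2$ at $y$ and at $g\cdot y$, and force $d(x,g\cdot x)$ to grow with $r=d(x,\axe(g))$. Note that the exact Saccheri constant is not needed for the statement, so the delicate gluing-and-monotonicity step can be dispensed with entirely in favour of your elementary fallback.

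That fallback, however, has one step that fails as written. You claim $\delta_0=\anglehyp{g y,y,g\cdot x}<\pi/2$ can be extracted from the law of cosines knowing only that the side lengths of $\tau(y,gy,g\cdot x)$ lie in $[\epsilon,k]$, $\{r\}$ and $[r-k,r+k]$. This is false: if $d(y,g\cdot x)$ were near the bottom of that range (say $\approx r-l$), the law of cosines gives $\cosh l\cosh(r-l)-\cosh r<0$ for large $r$, i.e.\ the comparison angle at $y$ is \emph{obtuse}, and your lower bound $\anglehyp{x,y,g\cdot x}\geq\pi/2-\delta_0$ becomes vacuous. The missing input is the right comparison angle at $g y$ (which you do establish but do not invoke at this point): it forces $\cosh d(y,g\cdot x)\geq\cosh l\cosh r$, whence
\[
\cos\anglehyp{gy,y,g\cdot x}\;=\;\frac{\cosh l\,\cosh d(y,g\cdot x)-\cosh r}{\sinh l\,\sinh d(y,g\cdot x)}\;\geq\;\frac{\sinh^2 l\,\cosh r}{2\sinh l\,\cosh l\,\cosh r}\;=\;\frac{\tanh l}{2}\;\geq\;\frac{\tanh\epsilon}{2}~,
\]
using $\sinh d(y,g\cdot x)\leq\sinh(l+r)\leq 2\cosh l\cosh r$. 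So $\delta_0=\arccos\bigl(\tanh(\epsilon)/2\bigr)<\pi/2$ works uniformly in $r$. With this correction the argument closes: $\cosh d(x,g\cdot x)\geq(1-\sin\delta_0)\cosh r\,\cosh d(y,g\cdot x)\geq(1-\sin\delta_0)\cosh^2 r$, so $d(x,g\cdot x)\leq k$ yields $r\leq\argcosh\sqrt{\cosh k/(1-\sin\delta_0)}=:C(\epsilon,k)$.
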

\begin{rmk}
Though we don't need it, we could give an explicit value of $C(\epsilon,k)$ by comparing with the case of $\H^2$.
\end{rmk}

The next proposition states that hyperbolic isometries have ``north-south'' dynamics in the following sense:
\begin{prop} \label{p:HyperboliqueDynamiqueNS}
Let $g$ be a hyperbolic isometry of $M$ and $g_-$, $g_+$ the extremities of its axis (oriented in the direction in which $g$ translates). Then there exist sequences of compact subsets $(U_n^-)$ and $(U_n^+)$ in $M \cup \partial_\infty M$ such that
\[\bigcap_{n \in \N} U_n^- = g_- ~ , \]
\[\bigcap_{n\in \N} U_n^+ = g_+ ~ , \]
and such that $g^n$ sends the complement of $U_n^-$ into $U_n^+$.
\end{prop}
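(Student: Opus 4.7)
The proof I would write is a standard ``north--south dynamics'' argument adapted to the CAT$(-1)$ setting. Write $\overline{M}:=M\cup\partial_\infty M$, fix a unit-speed parametrization $\gamma:\R\to M$ of the axis of $g$ with $\gamma(\pm\infty)=g_\pm$, and put $x_0=\gamma(0)$, so that $g$ acts on the axis by translation, $g\cdot\gamma(t)=\gamma(t+l(g))$. Since $M$ is a Riemannian CAT$(-1)$ space, $\overline{M}$ is homeomorphic to a closed ball, hence compact and metrizable, and admits a decreasing basis of compact neighborhoods $W_n^\pm\searrow\{g_\pm\}$. It therefore suffices to establish the following \textbf{uniform statement}: for every open neighborhoods $V_\pm\ni g_\pm$ in $\overline{M}$ there exists $N$ such that $g^n(\overline{M}\setminus V_-)\subset V_+$ for all $n\ge N$. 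Choosing $U_n^\pm:=W_{k(n)}^\pm$ with $k(n)$ increasing fast enough that the conclusion of the uniform statement holds for $V_\pm=W_{k(n)}^\pm$ and all $m\ge n$ yields the desired sequences.

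To prove the uniform statement, I would argue by contradiction. Otherwise, by compactness of $\overline{M}\setminus V_\pm$ one extracts indices $n_k\to\infty$ and points $y_k\to y^*\neq g_-$ with $z_k:=g^{n_k}y_k\to z^*\neq g_+$, and the task is to force $z^*=g_+$. When $y^*\in M$, this is direct via the nearest-point projection $\pi_\gamma:M\to\gamma$: this map is $1$-Lipschitz (projections onto complete convex subsets in any CAT$(0)$ space are non-expanding), commutes with $g$, and preserves the transverse distance $d(\cdot,\gamma)$. Since the projection parameter $t(y_k)$ and $d(y_k,\gamma)$ remain bounded, $z_k$ stays in a tube of bounded radius around $\gamma(t(y_k)+n_k\,l(g))$, which tends to $g_+$, so $z_k\to g_+$.

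When $y^*\in\partial_\infty M\setminus\{g_-\}$, the plan is to reduce to the previous case by approximation. Pick points $p_k$ on the geodesic from $x_0$ to $y_k$ at a large fixed distance $R$ from $x_0$; they converge to a point $p^*\in M$ lying on the ray $[x_0,y^*)$, and the $M$-case gives $g^{n_k}p_k\to g_+$. Applying the isometry $g^{-n_k}$, the hyperbolic angle at $x_0$ between $z_k$ and $g^{n_k}p_k$ equals the hyperbolic angle at $x_{-n_k}:=\gamma(-n_k\,l(g))$ between $y_k$ and $p_k$. Since $x_{-n_k}\to g_-$ while $p_k$ and $y_k$ stay bounded away from $g_-$, all three sides of the triangle $(x_{-n_k},p_k,y_k)$ tend to $+\infty$; passing to a subsequence so that angles converge, the angle at $p_k$ tends to the positive hyperbolic angle $\angle_\infty(g_-,p^*,y^*)$ between the rays from $p^*$ to $g_-$ and to $y^*$. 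Proposition~\ref{p:AnglesGrandTriangle} then forces the two other angles to vanish, in particular the angle at $x_{-n_k}$. Hence $z_k$ and $g^{n_k}p_k$ are seen from $x_0$ in asymptotically the same direction, and since $d(x_0,z_k)=d(x_{-n_k},y_k)\to\infty$, this forces $z_k\to g_+$, the desired contradiction.

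The main obstacle is precisely this last angular argument at infinity, where one transfers convergence from the bounded approximants $p_k$ to possibly ideal points $y_k$. This is the step that uses the CAT$(-1)$ hypothesis essentially (rather than mere Gromov hyperbolicity), through the triangle inequality for hyperbolic angles (Proposition~\ref{p:InegaliteTriangulaireAngles}) and the thinning property for long triangles (Proposition~\ref{p:AnglesGrandTriangle}); both are sharp consequences of the strict upper curvature bound by $-1$.
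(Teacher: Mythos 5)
Your argument is essentially correct, but there is no proof in the paper to compare it with: Proposition \ref{p:HyperboliqueDynamiqueNS} is one of the facts the appendix states without proof, with the remark that they are standard consequences of Gromov-hyperbolicity or of comparison geometry, and a pointer to Ghys--de la Harpe and Bridson--Haefliger. So where the author gives a citation, you give an actual argument, and it is the right one: the reduction to the uniform north--south statement, the construction of the $U_n^\pm$ from a compact neighbourhood basis (note only that $k(n)$ must grow \emph{slowly} enough that $n\geq N(k(n))$, not ``fast enough''), the interior case via the $1$-Lipschitz, $g$-equivariant nearest-point projection onto the axis, and the boundary case via comparison angles are all sound. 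The one point to tighten is your use of $\anglehyp{\cdot,\cdot,\cdot}$ and of Propositions \ref{p:InegaliteTriangulaireAngles} and \ref{p:AnglesGrandTriangle} for the triangle $\tau(x_{-n_k},p_k,y_k)$ when $y_k$ lies in $\partial_\infty M$: as stated, those propositions concern triangles with vertices in $M$, and extending the hyperbolic angle to a moving vertex and to ideal endpoints, while routine, is not free. The cheapest repair is to reduce to interior points at the outset: for each fixed $k$, the map $g^{n_k}$ is a homeomorphism of the compact metrizable space $M\cup\partial_\infty M$ and $M$ is dense, so an ideal $y_k$ can be replaced by $y_k'\in M$ with $y_k'\to y^*$ and $g^{n_k}y_k'\to z^*$; after this every triangle in your argument has vertices in $M$ and the quoted propositions apply verbatim. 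With that adjustment, your final step (vanishing comparison angle at $x_0$ together with $d(x_0,z_k)\to\infty$ forces $z_k\to g_+$) is the standard identification of the cone topology with the topology of $M\cup\partial_\infty M$, which is legitimate since $M$ is $\CAT(-1)$.
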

What's more, this type of dynamics caracterizes hyperbolic isometries:
\begin{prop} \label{p:CaracterisationDynamiqueNS}
Let $g_n$ be a sequence of isometries in $M$. Assume that there exists two distinct points $p_-$ and $p_+$ in $\partial_\infty M$ and two sequences of compact subsets $(U_n^-)$ and $(U_n^+)$ in $M \cup \partial_\infty M$ such that
\[\bigcap_{n \in \N} U_n^- = p_- ~ , \]
\[\bigcap_{n\in \N} U_n^+ = p_+ ~ , \]
and such that $g_n$ sends the complement of $U_n^-$ into $U_n^+$. Then the translation length of $g_n$ goes to $+\infty$ when $n$ goes to $+\infty$. In particular, $g_n$ is hyperbolic for $n$ big enough.
\end{prop}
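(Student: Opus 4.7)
The plan is a proof by contradiction. Suppose, along a subsequence still indexed by $n$, that $l(g_n) \leq L$ for some constant $L$. First I would collect three easy consequences of the hypothesis. Taking disjoint open neighborhoods of $p_\pm$ in $M\cup\partial_\infty M$, the compactness of the $U_n^\pm$ and the assumption on their intersections force $U_n^+ \cap U_n^- = \emptyset$ for $n$ large. Applying $g_n^{-1}$ to the inclusion gives the symmetric statement $g_n^{-1}(M \cup \partial_\infty M \setminus U_n^+) \subseteq U_n^-$. Finally, for any fixed basepoint $x_0 \in M$, we have $x_0 \notin U_n^\pm$ for $n$ large, so $g_n x_0 \in U_n^+$ and $g_n^{-1} x_0 \in U_n^-$, whence $g_n^{\pm 1} x_0 \to p_{\pm}$ in $M\cup\partial_\infty M$ and in particular $d(x_0, g_n x_0), d(x_0, g_n^{-1} x_0) \to +\infty$.

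Since $l(g_n) \leq L$ is the infimum of displacements, pick $y_n$ with $d(y_n, g_n y_n) \leq L + 1$, and by compactness of $M \cup \partial_\infty M$ extract a subsequence with $y_n \to y$. If $y \in M$, then $d(y, g_n y) \leq L + 2$ for $n$ large, whereas $y \notin U_n^-$ forces $g_n y \in U_n^+$, hence $g_n y \to p_+$, so $d(y, g_n y)\to\infty$, a contradiction. If $y \in \partial_\infty M \setminus\{p_-\}$, then for $n$ large $y_n \notin U_n^-$, so $g_n y_n \in U_n^+$ and $g_n y_n \to p_+$; but a CAT$(-1)$ triangle comparison (the same computation as in the next paragraph) forces any sequence at bounded distance from $y_n$ to converge to the same boundary point as $y_n$, which yields $y = p_+$. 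A symmetric argument, using the dynamics of $g_n^{-1}$, excludes $y = p_-$. So we are reduced to the case $y = p_+$.

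In this remaining case, consider the triangle $(x_0, y_n, g_n^{-1} x_0)$ with side lengths $a_n = d(x_0, y_n)$, $b_n = d(x_0, g_n^{-1} x_0)$, and $c_n = d(y_n, g_n^{-1} x_0)$. Since $y_n \to p_+$ and $g_n^{-1} x_0 \to p_-$, both $a_n, b_n \to \infty$, and by definition of $\angle_\infty$ the hyperbolic angle at $x_0$ converges to $\angle_\infty(p_+, x_0, p_-)$, which is strictly positive because $p_+ \neq p_-$. Writing the hyperbolic law of cosines in the $\H^2$-comparison triangle and extracting the dominant exponential term yields
$$c_n \geq a_n + b_n - C$$
for some constant $C$. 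On the other hand, the triangle inequality combined with the isometry property of $g_n^{-1}$ gives
$$c_n \leq d(y_n, g_n^{-1} y_n) + d(g_n^{-1} y_n, g_n^{-1} x_0) = d(g_n y_n, y_n) + d(y_n, x_0) \leq (L+1) + a_n.$$
Combining these two estimates forces $b_n \leq L + 1 + C$, contradicting $b_n \to \infty$.

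The main obstacle is precisely the case $y = p_+$: one cannot rule out $y_n \in U_n^+$, so the dynamical hypothesis gives no direct information at $y_n$. The key point is that the bounded displacement of $y_n$ nevertheless clashes, via the exponential Gromov-product estimate for CAT$(-1)$ triangles, with the fact that $g_n^{-1} x_0$ escapes to the \emph{opposite} boundary point $p_-$. This single triangle-comparison computation is really the only genuine technical input of the proof; the same estimate underlies the claim used in the middle case.
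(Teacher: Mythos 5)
The paper does not actually prove Proposition \ref{p:CaracterisationDynamiqueNS}: it is listed among the classical comparison facts about $\CAT(-1)$ spaces at the start of the appendix, with a pointer to \cite{GhysDeLaHarpe} and \cite{BridsonHaefliger}, so your argument can only be judged on its own merits. Its core is sound and is the standard one: assume $l(g_n)\leq L$ along a subsequence, pick almost-minimally displaced points $y_n$, let them converge in the compact space $M\cup\partial_\infty M$, rule out an interior limit by the north--south dynamics, and derive the contradiction by playing the $\CAT(-1)$ law-of-cosines estimate $d(y_n,g_n^{-1}x_0)\geq d(x_0,y_n)+d(x_0,g_n^{-1}x_0)-C$ against the triangle inequality $d(y_n,g_n^{-1}x_0)\leq (L+1)+d(x_0,y_n)$. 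The individual estimates all check out, including the positivity of $\angle_\infty(p_+,x_0,p_-)$, which the paper guarantees since $\angle_\infty(\cdot,x_0,\cdot)$ is a distance on $\partial_\infty M$.

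The one step that does not work as written is the sentence ``A symmetric argument, using the dynamics of $g_n^{-1}$, excludes $y=p_-$.'' That symmetric argument says: if $y\neq p_+$ then $y_n\notin U_n^+$ eventually, so $g_n^{-1}y_n\in U_n^-\to p_-$, and since $g_n^{-1}y_n$ stays at bounded distance from $y_n$ one gets $y=p_-$ --- which is perfectly consistent with $y=p_-$, not a contradiction. Combined with the direct argument it only yields $y\in\{p_-,p_+\}$. The repair is immediate and uses nothing new: if $y=p_-$, run your final paragraph on the triangle $(x_0,y_n,g_nx_0)$ instead, since $g_nx_0\to p_+\neq p_-=y$ and $d(y_n,g_nx_0)\leq d(y_n,g_ny_n)+d(g_ny_n,g_nx_0)\leq (L+1)+d(x_0,y_n)$ holds just the same. (In fact the whole reduction to $y=p_+$ is unnecessary: once $y\in\partial_\infty M$, at least one of $y\neq p_-$, $y\neq p_+$ holds, and the corresponding triangle gives the contradiction.) One last small point: you tacitly use that the $U_n^{\pm}$ shrink to $p_\pm$ (to get $U_n^+\cap U_n^-=\emptyset$ and $x_0\notin U_n^{\pm}$ for large $n$); the bare hypothesis $\bigcap_n U_n^{\pm}=\{p_\pm\}$ gives this only for nested families, which is clearly what the paper intends given how these sets arise in Proposition \ref{p:HyperboliqueDynamiqueNS}.
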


Let us finish with a proposition that characterizes hyperbolic isometries by looking only at three points in an orbit.
\begin{prop} \label{p:CaracterisationIsomHyperboliqueAngles}
For any $\epsilon > 0$, there is a constant $C$ such that for any $g$ in $\Isom(M)$, if there exists a point $x$ such that
\[d(x,g\cdot x) \geq C \]
and 
\[\anglehyp{g^{-1} \cdot x, x, g \cdot x} \geq \epsilon ~ ,\]
then $g$ is hyperbolic.
\end{prop}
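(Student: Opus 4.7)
The plan is to prove the contrapositive: assuming $g$ is not hyperbolic, so that $g$ is either elliptic or parabolic, I would show that $\anglehyp{g^{-1}x, x, g\cdot x}$ must be small whenever $d := d(x, g\cdot x)$ is large enough. The underlying geometric picture is that when $g$ is elliptic, fixing some $y \in M$, or parabolic, fixing some $p \in \partial_\infty M$, the orbit of $x$ under $\langle g \rangle$ lies on a common sphere about $y$ or on a common horosphere tangent at $p$, and from a far-away point $x$ the geodesic rays toward the two neighbours $g^{\pm 1} x$ nearly coincide with the ray toward $y$ (resp.\ $p$), forcing the angle between them to be small. Quantitatively, the argument proceeds by majorizing $\anglehyp{g^{-1}x, x, g\cdot x}$ by twice the angle at $x$ toward the fixed locus, using the triangular inequality of Proposition~\ref{p:InegaliteTriangulaireAngles}.

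In the elliptic case, set $r = d(x, y)$; the triangle inequality gives $r \geq d/2$, and by $g$-equivariance $d(g^{\pm 1} x, y) = r$. The $\H^2$-comparison triangle of $(y, x, g\cdot x)$ has sides $r, r, d$, and the hyperbolic base angle at $x$, computed by the hyperbolic law of cosines, is $\gamma$ with
\[ \cos\gamma \;=\; \coth(r)\tanh(d/2) \;\geq\; \tanh(d/2). \]
Since $(g^{-1}x, x, y)$ and $(y, x, g\cdot x)$ are isometric under $g$, both carry hyperbolic angle $\gamma$ at $x$, so Proposition~\ref{p:InegaliteTriangulaireAngles} gives $\anglehyp{g^{-1}x, x, g\cdot x} \leq 2\gamma \leq 2\arccos(\tanh(d/2))$, which tends to $0$ as $d \to \infty$.

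The parabolic case runs along the same lines with $y$ replaced by the fixed boundary point $p$. By Proposition~\ref{p:IsometrieBuseman}, $B_{p,x}(g\cdot x) = \pm\, l(g) = 0$, so $g\cdot x$ lies on the horosphere through $x$ tangent at $p$. Parametrize the geodesic ray from $x$ toward $p$ as $(y_t)_{t \geq 0}$; identity~\eqref{eq:IdentiteBuseman} then yields $d(y_t, g\cdot x) - t \to B_{p, g\cdot x}(x) = -B_{p,x}(g\cdot x) = 0$. Substituting the side lengths $t,\; d,\; t + o(1)$ into the hyperbolic law of cosines for the comparison triangle of $(x, y_t, g\cdot x)$ and passing to the limit gives $\anglehyp{y_t, x, g\cdot x} \to \arccos(\tanh(d/2))$, which by construction equals the extended angle $\angle_\infty(p, x, g\cdot x)$. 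The same formula holds with $g^{-1}$ in place of $g$, and passing to $t \to \infty$ in Proposition~\ref{p:InegaliteTriangulaireAngles} applied with $y_t$ in the intermediate slot yields again $\anglehyp{g^{-1}x, x, g\cdot x} \leq 2\arccos(\tanh(d/2))$.

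Choosing $C(\epsilon)$ large enough to make $2\arccos(\tanh(C/2)) < \epsilon$ concludes the proof, since in both cases the displacement $d$ being $\geq C(\epsilon)$ forces the angle below $\epsilon$. The main technical obstacle lies in the parabolic case: the comparison argument must be carried out with a direction to an ideal boundary point, and this requires the Busemann-function bookkeeping above to pin down the asymptotic behaviour of $d(y_t, g\cdot x) - t$ — it is precisely $l(g) = 0$ that makes this Busemann increment vanish and gives the clean formula $\angle_\infty(p, x, g\cdot x) = \arccos(\tanh(d/2))$.
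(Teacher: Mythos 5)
Your argument is correct, and it implements precisely the strategy the paper has in mind: Proposition~\ref{p:CaracterisationIsomHyperboliqueAngles} is stated there without proof, as one of several classical comparison facts, with only the remark that ``we could specify the constant $C$ by comparing with the case of $\H^2$'' --- which is exactly what you do, splitting the comparison angle at $x$ through the fixed point (respectively the fixed ideal point) via Proposition~\ref{p:InegaliteTriangulaireAngles} and bounding each half by $\arccos(\tanh(d/2))$ using the hyperbolic law of cosines. The Busemann bookkeeping in the parabolic case ($B_{p,g\cdot x}(x)=-B_{p,x}(g\cdot x)=0$ by Proposition~\ref{p:IsometrieBuseman} and identity~\eqref{eq:IdentiteBuseman}) is the right way to control the third side length of the comparison triangle; the only cosmetic quibble is that the paper defines $\angle_\infty$ only for two boundary points, but you compute the relevant limit of $\anglehyp{y_t,x,g\cdot x}$ directly, so nothing is missing.
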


Again, we could specify the constant $C$ by comparing with the case of $\H^2$.

\section{Parabolic representations}

Recall that a representation $\rho: \pi_1(S) \to \Isom(M)$ is called \emph{parabolic} if it fixes a point in $\partial_\infty M$. The purpose of this section is to prove two lemmas that will allow us to show lower semi-continuity of $\Lip(j, \, \cdot \,)$ at a parabolic representation in the next section. The first lemma characterises parabolic representations by the additivity of their length spectrum, while the second one relates the minimal Lipschitz constant to this length spectrum.

\begin{lem} \label{l:CaracterisationsRepresentationsReductibles}
Let $\Gamma$ be a subgroup of $\Isom(M)$. The following are equivalent:
\begin{itemize}
\item[(i)] $\Gamma$ fixes a point in $M \cup \partial_\infty M$,
\item[(ii)] there exists a morphism $m$ from $\Gamma$ to $\R$ such that for all $g\in \Gamma$,
\[l(g) = |m(g)| ~ . \]
\end{itemize}
\end{lem}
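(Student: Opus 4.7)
The forward implication $(i) \Rightarrow (ii)$ splits into two cases. If $\Gamma$ fixes a point $x_0 \in M$, then every element of $\Gamma$ is elliptic, so $l \equiv 0$ and $m \equiv 0$ works. If $\Gamma$ fixes $p \in \partial_\infty M$, I define $m(g) := B_{p,x}(g \cdot x)$; Proposition \ref{p:IsometrieBuseman} guarantees that this is independent of $x$ and satisfies $|m(g)| = l(g)$. To check that $m$ is a morphism, I combine the cocycle identity \eqref{eq:IdentiteBuseman} with the fact that $B_{p, g \cdot y}(g \cdot z) = B_{p, y}(z)$ for any isometry $g$ fixing $p$ (an immediate consequence of the definition of the Busemann function):
\[ m(gh) = B_{p,x}(g \cdot x) + B_{p, g \cdot x}(g \cdot h \cdot x) = m(g) + B_{p,x}(h \cdot x) = m(g) + m(h). \]

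For the converse $(ii) \Rightarrow (i)$, I distinguish two subcases. If $m \equiv 0$, then $l(g) = 0$ for every $g \in \Gamma$, so $\Gamma$ contains no hyperbolic element. I would invoke the standard dichotomy in $\CAT(-1)$ geometry: a subgroup of $\Isom(M)$ whose elements are all elliptic or parabolic must fix some point of $M \cup \partial_\infty M$. This is a classical consequence of the trichotomy for isometries together with the fact that two elliptic or parabolic elements with ``sufficiently different'' fixed-point configurations always produce a hyperbolic product, contradicting $l \equiv 0$.

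When $m \not\equiv 0$, I pick $g_0 \in \Gamma$ with $m(g_0) > 0$ (replacing $g_0$ by its inverse if necessary), and let $p^+ \in \partial_\infty M$ be its attracting fixed point. I claim every $h \in \Gamma$ fixes $p^+$, which provides the desired common fixed point. Indeed, for any such $h$, the morphism property yields
\[ l\bigl(g_0 \cdot (h g_0 h^{-1})\bigr) = 2\, m(g_0) = l(g_0) + l(h g_0 h^{-1}). \]
The key input is a standard rigidity fact: for two hyperbolic isometries $g_1, g_2$ of a $\CAT(-1)$ space, the equality $l(g_1 g_2) = l(g_1) + l(g_2) > 0$ forces the axes of $g_1$ and $g_2$ to share an attracting endpoint at infinity. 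In $\H^2$ this follows from explicit trigonometric formulas (ultraparallel axes give strict inequality $>$, transversally crossing axes give the reverse strict inequality $<$, and only the asymptotic/coincident case with matching attracting direction achieves equality). Applied to $g_0$ and $h g_0 h^{-1}$, the common attracting endpoint must simultaneously be $p^+$ and $h \cdot p^+$, so $h \cdot p^+ = p^+$.

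The main obstacle in making this plan rigorous is proving the $\CAT(-1)$ rigidity statement in a general Riemannian setting rather than just $\H^2$. This requires triangle comparison using the angle and Busemann tools recorded in Propositions \ref{p:InegaliteTriangulaireAngles}--\ref{p:CaracterisationIsomHyperboliqueAngles}; intuitively, the ``equality-in-subadditivity'' forces a sequence of nearly-degenerate triangles in $M$ pointing toward a common direction at infinity, which by Proposition \ref{p:AnglesGrandTriangle} can only converge to a shared boundary point. The $m \equiv 0$ case similarly rests on a standard but nontrivial ``no-hyperbolic-implies-fixed-point'' result that must be quoted from the $\CAT(-1)$ literature.
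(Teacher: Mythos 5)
Your forward implication coincides with the paper's: define $m(g)=B_{p,x_0}(g\cdot x_0)$ and use Proposition \ref{p:IsometrieBuseman} together with the cocycle identity for Busemann functions; that part is fine. The converse is where all the work lies, and your proposal leaves both of its hard steps unproven. For $m\not\equiv 0$ you reduce everything to the rigidity statement ``$l(g_1g_2)=l(g_1)+l(g_2)>0$ for two hyperbolic isometries forces a common attracting endpoint'', which you justify only by trigonometric formulas in $\H^2$ and explicitly flag as ``the main obstacle'' in a general Riemannian $\CAT(-1)$ space. That is precisely the content that needs a proof, and the paper avoids it altogether: it argues by contradiction that if two hyperbolic elements of $\Gamma$ had axes with four distinct endpoints, then after passing to powers the pair has Schottky-type dynamics, so by Propositions \ref{p:HyperboliqueDynamiqueNS} and \ref{p:CaracterisationDynamiqueNS} the translation lengths of $g^nh^{\pm k}$ tend to $+\infty$, while the morphism property gives $l(g^nh^k)=|n\,m(g)+k\,m(h)|$, which can be kept bounded by choosing $k$ suitably for each $n$. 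This ping-pong argument is more elementary and genuinely different from your equality-case analysis. Note also that your rigidity statement is too strong even in $\H^2$: equality can hold when the two axes share a \emph{repelling} endpoint (both isometries translating away from it), so for each $h$ you only obtain ``$h$ fixes $p^+$ or $h$ fixes $p^-$'', and you would still need the (easy) observation that a group cannot be the union of two proper subgroups before concluding that a single point is fixed by all of $\Gamma$.

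The case $m\equiv 0$ is likewise only quoted in your proposal (``a subgroup with no hyperbolic element fixes a point of $M\cup\partial_\infty M$''), whereas the paper proves it: either an orbit is bounded and $\Gamma$ fixes the center of the smallest ball containing it, or one takes two distinct boundary accumulation points $p\neq p'$ of an orbit and uses the hyperbolic-angle estimates (Propositions \ref{p:InegaliteTriangulaireAngles}, \ref{p:AnglesGrandTriangle} and \ref{p:CaracterisationIsomHyperboliqueAngles}) to show that $h_n^{-1}g_n$ is eventually hyperbolic, a contradiction. So while your overall architecture (the same two cases and the same candidate fixed point) is sound, the two lemmas you propose to import are exactly the substance of the paper's proof, and one of them is misstated; as written the proposal has a genuine gap.
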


\begin{lem} \label{l:ConstanteLipschitzRepReductible}
Let $j$ be a Fuchsian representation from $\pi_1(S)$ to $\Isom(\H^2)$ and $\rho: \pi_1(S) \to \Isom(M)$ a parabolic representation. Let $m:\pi_1(S) \to \R$ be a morphism such that
\[ l(\rho(\gamma)) = |m(\gamma)| \]
for all $\gamma \in \pi_1(S)$.
Denote by $\Lip(j,m)$ the minimal Lipschitz constant of a $(j,m)$-equivariant map from $\H^2$ to $\R$. Then
\[ \Lip(j,\rho) = \Lip(j,m)~ .\]
\end{lem}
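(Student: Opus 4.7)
My plan is to fix a point $p\in\partial_\infty M$ fixed by $\rho$ and transport information between maps $\H^2 \to M$ and functions $\H^2 \to \R$ via the Busemann function $B:=B_{p,x_0}$ and the Busemann flow $F_{p,t}$. By Proposition~\ref{p:IsometrieBuseman} combined with the cocycle identity (\ref{eq:IdentiteBuseman}), the map $\gamma\mapsto B(\rho(\gamma)\cdot x_0)$ is a morphism $\pi_1(S)\to\R$ of absolute value $l(\rho(\gamma))$, so it coincides with $\pm m$; since $\Lip(j,m)=\Lip(j,-m)$ (replace $h$ by $-h$), I may assume the two agree. The easy inequality $\Lip(j,m)\le\Lip(j,\rho)$ is then: for any $(j,\rho)$-equivariant $\lambda$-Lipschitz map $f:\H^2\to M$, the projection $h:=B\circ f$ is $\lambda$-Lipschitz (Busemann functions are $1$-Lipschitz by combining (\ref{eq:BusemanVSDistance}) with (\ref{eq:IdentiteBuseman})) and $(j,m)$-equivariant, the latter because $B(\rho(\gamma)\cdot z)=m(\gamma)+B(z)$ for every $z$, since an isometry fixing $p$ sends rays toward $p$ to rays toward $p$ and therefore preserves the Busemann cocycle.

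For the opposite inequality, let $h:\H^2\to\R$ be any $(j,m)$-equivariant, $L$-Lipschitz function. I will produce $(j,\rho)$-equivariant maps whose Lipschitz constants approach $L$. Fix any smooth $(j,\rho)$-equivariant Lipschitz map $\phi_0:\H^2\to M$ (such a map exists because $M$ is contractible and $S$ is compact). Its projection $\tilde\phi_0:=B\circ\phi_0$ is $(j,m)$-equivariant by the previous step, so $u:=h-\tilde\phi_0$ is $\pi_1(S)$-invariant, hence bounded and Lipschitz on $\H^2$; adding a constant to $h$ if necessary (which changes neither its Lipschitz constant nor its equivariance), I may assume $u\ge 0$. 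I then set
\[
\phi_1(x):=F_{p,u(x)}(\phi_0(x)).
\]
Because $F_{p,s}$ commutes with every isometry fixing $p$ (it moves points along geodesic rays pointing at $p$) and $u$ is $\pi_1(S)$-invariant, $\phi_1$ is $(j,\rho)$-equivariant. Using the identity $B(F_{p,s}(y))=B(y)+s$ one checks $B\circ\phi_1=h$, and a triangle inequality combining the $1$-Lipschitz property of $F_{p,u(x)}$ with the Lipschitz property of $u$ shows $\phi_1$ is Lipschitz, with some constant whose precise value I do not need to control.

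Finally, I contract $\phi_1$ by setting $\phi_t:=F_{p,t}\circ\phi_1$ for $t\ge 0$; this is still $(j,\rho)$-equivariant since $F_{p,t}$ commutes with $\rho(\pi_1(S))$. Proposition~\ref{p:ContractionBuseman} together with the cocycle identity yields
\[
d(\phi_t(x),\phi_t(y))\le |h(y)-h(x)|+e^{-t}d(\phi_1(x),\phi_1(y))\le\bigl(L+e^{-t}\Lip(\phi_1)\bigr)d_{\H^2}(x,y),
\]
so $\Lip(j,\rho)\le\Lip(\phi_t)\to L$ as $t\to\infty$, hence $\Lip(j,\rho)\le L$ for every admissible $L$, and letting $L\downarrow\Lip(j,m)$ finishes the argument. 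The main obstacle is the construction of $\phi_1$ with prescribed Busemann projection equal to $h$: this is the step that exploits the commutation of $F_{p,s}$ with the parabolic group $\rho(\pi_1(S))$, while all uncontrolled Lipschitz information about the auxiliary map $\phi_0$ is absorbed by the decaying factor $e^{-t}$ in the contraction stage and disappears in the limit.
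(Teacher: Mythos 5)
Your proof is correct, and its overall architecture coincides with the paper's: the inequality $\Lip(j,m)\le\Lip(j,\rho)$ comes from post-composing with the Busemann function $B_{p,x_0}$, and the reverse inequality comes from lifting a $(j,m)$-equivariant function to a $(j,\rho)$-equivariant map and then killing the excess Lipschitz constant by composing with $F_{p,t}$ and letting $t\to\infty$. The one place where you genuinely diverge is the lifting step. The paper obtains the lift abstractly: it forms the flat bundles $S\times_\rho M$ and $S\times_m\R$, observes that $(x,y)\mapsto(x,B_{p,x_0}(y))$ induces a fibration with contractible (horospherical) fibers, and takes a smooth section; the resulting lift is then merely asserted to be $C'$-Lipschitz for some uncontrolled $C'$. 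You instead build the lift explicitly as $\phi_1(x)=F_{p,u(x)}(\phi_0(x))$, where $\phi_0$ is any equivariant Lipschitz map and $u=h-B\circ\phi_0$ is $\pi_1(S)$-invariant (hence bounded, so it can be made nonnegative by shifting $h$). This buys you two things: the equivariance of the lift reduces to the visible commutation of $F_{p,s}$ with isometries fixing $p$, and the Lipschitz constant of $\phi_1$ is transparently bounded by $\Lip(\phi_0)+\Lip(u)$ via the triangle inequality along the rays toward $p$, so no appeal to smooth sections of a fiber bundle is needed. You also take care of a point the paper glosses over, namely that the Busemann morphism $\gamma\mapsto B_{p,x_0}(\rho(\gamma)\cdot x_0)$ might a priori be $-m$ rather than $m$; your observation that $\Lip(j,m)=\Lip(j,-m)$ disposes of this cleanly. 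Both arguments end identically with the estimate $d(\phi_t(x),\phi_t(y))\le\bigl(L+e^{-t}\Lip(\phi_1)\bigr)d(x,y)$ from Proposition \ref{p:ContractionBuseman}.
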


\begin{proof}[Proof of lemma \ref{l:CaracterisationsRepresentationsReductibles}]{\

}

$(i) \Rightarrow (ii)$.\\
If $\Gamma$ has a fixed point in $M$, then $l(g) = 0$ for any $g\in \Gamma$ and $m$ is the trivial morphism. Otherwise, let $p \in \partial_\infty M$ be a fixed point of $\Gamma$. Choose a base point $x_0$ in $M$ and set
\[m(g) = B_{p,x_0}(g \cdot x_0)\]
for all $g \in \Gamma$. According to proposition \ref{p:IsometrieBuseman}, we have
\[| m(g) | = l(g) ~ .\]

According to equation (\ref{eq:IdentiteBuseman}), for any $g$, $h$ in $\Gamma$, we have
\begin{eqnarray*}
m(gh) & = & B_{p,x_0}(gh \cdot x_0) \\
\ & = & B_{p,x_0}(h \cdot x_0) + B_{p,h \cdot x_0}(gh \cdot x_0) ~ .
\end{eqnarray*}
By proposition \ref{p:IsometrieBuseman}, we get
\[B_{p,h\cdot x_0}(gh \cdot x_0) = B_{p,x_0}(g \cdot x_0)\]
hence
\begin{eqnarray*}
m(gh) & = & B_{p,x_0}(g \cdot x_0) + B_{p,x_0}(h \cdot x_0) \\
\ & = & m(g) + m(h) ~ .
\end{eqnarray*}
The function $m$ is thus a morphism from $\Gamma$ to $\R$.\\

$(ii)\Rightarrow (i)$.\\
In order to prove the converse, we have to consider two distinct cases.\\

$\bullet$ { \it Case 1: $m \centernot{\equiv} 0. \quad$}
In that case, $\Gamma$ contains hyperbolic elements. Let us prove that all the axes of those elements have a common extremity. It is then easy to deduce that this common extremity is fixed by $\Gamma$.

Assume that there exist $g$ and $h$ two hyperbolic isometries in $\Gamma$ whose axes have all extremities disjoint. Up to replacing $g$ and $h$ by some powers, we can assume that the group generated by $g$ and $h$ has a Schottky-type dynamics (figure \ref{fig:DynamiqueSchottky}).The general idea is then that the orbit of $x$ under this group must be quasi-isometric to a free group with two generators equipped with its Cayley metric, which contradicts the fact that the translation lengths of the elements of this group are given by a morphism with values in $\R$. 

To be more precise, we can apply proposition \ref{p:HyperboliqueDynamiqueNS} to $g$ and $h$ and consider $(U_n^+)$, $(U_n^-)$, $(V_k^+)$ and $(V_k^-)$ four sequences of compact subsets in $M\cup \partial_\infty M$ such that $\bigcap U_n^+ = g^+$, $\bigcap U_n^- = g^-$, $\bigcap V_k^+ = h^+$ and $\bigcap V_k^- = h^-$, and such that $g^n$ sends the complement of $U_n^-$ into $U_n^+$, $g^{-n}$ sends the complement of $U_n^+$ into $U_n^-$, $h^k$ sends the complement of $V_k^-$ into $V_k^+$ and $h^{-k}$ sends the complement of $V_k^+$ into $V_k^-$.

\begin{figure} 
\begin{center}
\includegraphics[width=8cm]{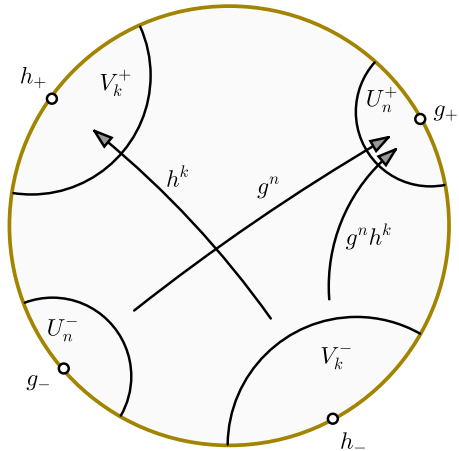}
\caption{Schottky-type dynamics.}
\label{fig:DynamiqueSchottky}
\end{center}
\end{figure}

For $n$ and $k$ large enough, the sets $U_n^+$, $U_n^-$, $V_k^+$ and $V_k^-$ are pairwise disjoint and the isometry $g^n h^k$ sends the complement of $V_k^-$ into $U_n^+$. Since $\bigcap U_n^+ = g_+$ and $\bigcap V_k^- = h_-$, we obtain from proposition \ref{p:CaracterisationDynamiqueNS} that the translation length of $g^n h^k$ goes to $+\infty$ when $n$ and $k$ go simulatenously to $+\infty$. Replacing $h$ by $h^{-1}$, one would obtain that the translation length of $g^n h^{-k}$ goes to infinity when $n$ and $k$ go to $+\infty$. 

On the other side, we have
\[l(g^n h^k) = | n \ m(g) + k\ m(h) | ~,\]
and since $m(g)$ and $m(h)$ are non-zero, one can find, for any integer $n$, an integer $k$ such that
\[l(g^n h^k) \leq |m(h)| ~. \]
This contradicts the fact that $l(g^n h^k)$ goes to infinity when $n$ and $k$ go to infinity.

This proves that the axes of any two hyperbolic elements in $\Gamma$ have a common extremity. One can easily deduce from this fact that all those axes have a common extremity which is fixed by $\Gamma$.

$\bullet$ {\it Case 2:  $m \equiv 0. \quad $} This is the very specific case where no element of $\Gamma$ is hyperbolic. Let $x$ be a point in $M$. If the orbit of $x$ under $\Gamma$ is bounded, then $\Gamma$ stabilizes this orbit and fixes the center of the smallest closed ball containing this orbit. Otherwise, let us shaw that the closure of $\Gamma \cdot x$ contains a unique point in $\partial_\infty M$. This point will therefore be fixed by $\Gamma$.  The argument that we use is a variation on a classical argument in the study of Gromov-hyperbolic spaces (see for instance \cite{GhysDeLaHarpe}).

Let us assume by contradiction that the closure of $\Gamma \cdot x$ contains two distinct points $p$ and $p'$ of $\partial_\infty M$. Let $(g_n)$ and $(h_n)$ be two sequences of elements in $\Gamma$ such that \[g_n \cdot x \tend{n\to +\infty} p\] and \[h_n \cdot x \tend{n \to + \infty} p'~.\] Since $p$ and $p'$are distinct, the angle \[\alpha_n =\anglehyp{g_n \cdot x, x,  h_n \cdot x}\] converges to $\alpha = \angle_\infty(p,x,p')$ which is non-zero.

Since $g_n$ is not hyperbolic and since $d(x, g_n \cdot x)$ goes to infinity, we obtain from proposition \ref{p:CaracterisationIsomHyperboliqueAngles} that the angle
\[\anglehyp{g_n \cdot x, x, g_n^{-1} \cdot x }\]
goes to $0$ when $n$ goes to infinity.

What's more, the lengths of the triangle $\tau(x, h_n \cdot x, g_n \cdot x)$ all go to $+\infty$ and, from proposition \ref{p:AnglesGrandTriangle}, we deduce that the angle $\anglehyp{x, h_n \cdot x, g_n \cdot x }$ goes to $0$ when $n$ goes to $+ \infty$. Applying the isometry $g_n^{-1}$, we get that the angle $\anglehyp{g_n^{-1} \cdot x, x, g_n^{-1} h_n \cdot x }$ also goes to $0$. By triangular inequality, we thus know that
\[\anglehyp{g_n^{-1} h_n \cdot x , x, g_n \cdot x} \]
goes to $0$ (the figure \ref{fig:IsometrieHyperbolique} helps visualizing those angles). Reversing the roles of $g_n$ and $h_n$, one would prove similarly that
\[\anglehyp{h_n^{-1} g_n \cdot x , x, h_n \cdot x}\]
goes to $0$. We eventually obtain that the angle
\[\anglehyp{g_n^{-1} h_n \cdot x, x , h_n^{-1} g_n \cdot x } \]
goes to $\alpha \neq 0$. Since the distance $d(x, h_n^{-1} g_n \cdot x)$ grows to infinity, we deduce from proposition \ref{p:CaracterisationIsomHyperboliqueAngles} that $h_n^{-1} g_n$ is a hyperbolic isometry for $n$ large enough, which contradicts the hypothesis.

\begin{figure} 
\begin{center}
\includegraphics[width=8cm]{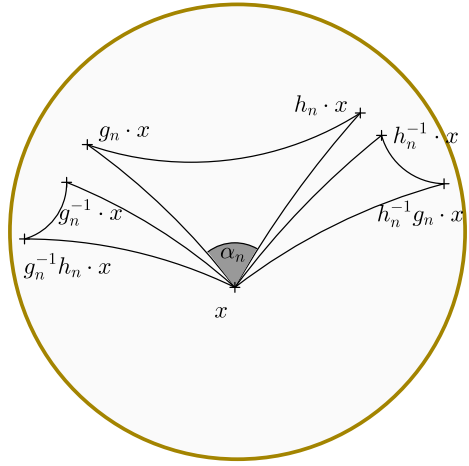}
\caption{The isometry $h_n^{-1} g_n$ is hyperbolic.}
\label{fig:IsometrieHyperbolique}
\end{center}
\end{figure}

Therefore, the closure of $\Gamma \cdot x$ contains a unique point in $\partial_\infty M$ which is fixed by $\Gamma$. This concludes the proof of lemma \ref{l:CaracterisationsRepresentationsReductibles}.
\end{proof}

\begin{proof}[Proof of lemma \ref{l:ConstanteLipschitzRepReductible}]
Let $j: \pi_1(S) \to \Isom(\H^2)$ be a Fuchsian representation and $\rho: \pi_1(S) \to \Isom(M)$ a parabolic representation. According to lemma \ref{l:CaracterisationsRepresentationsReductibles}, there exists a morphism $m: \pi_1(S) \to \R$ such that for all $g \in \pi_1(S)$, 
\[l(g) = |m(g)| ~ .\]
Let us prove that
\[\Lip(j,\rho) = \Lip(j,m) ~ .\]

Let $p \in \partial_\infty M$ be the fixed point of $\rho$. Recall that the morphism $m$ is given by 
\[m(\gamma) = B_{p,x}(\rho(\gamma) \cdot x) ~, \]
for any point $x \in M$.
Consider first a $(j,\rho)$-equivariant and $C$-Lipschitz map $f: \H^2 \to M$. Fix a base point $x_0 \in M$ and set
\[ \function{\bar{f}}{\H^2}{\R}{x}{B_{p,x_0}(f(x)) ~ .} \]
We easily check that $\bar{f}$ is $(j,m)$-equivariant. Moreover, we have, for any $x$, $y \in \H^2$,
\begin{eqnarray*}
|\bar{f}(y) - \bar{f}(x)| & = & | B_{p,x_0}(f(y)) - B_{p,x_0}(f(x)) | \\
\ & = & | B_{p,f(x)}(f(y)) | \\
\ & \leq & d(f(x), f(y)) ~ ,
\end{eqnarray*}
hence $\bar{f}$ is also $C$-Lipschitz. We obtain that
\[ \Lip(j,m) \leq \Lip(j,\rho) ~ . \\ \]

Let us now prove the converse inequality. Let $\bar{f}$ be a $(j,m)$-equivariant and $C$-Lipschitz function from $\H^2$ to $\R$.
Let $S\times_\rho M$ denote the flat $M$-bundle 
\[\H^2 \times M / (j,\rho)(\pi_1(S)) ~ , \]
and $S \times_m \R$ the flat $\R$-bundle
\[ \H^2 \times \R / (j,m)(\pi_1(S)) ~ .\]
The map from $\H^2 \times M$ to $\H^2 \times \R$ sending $(x,y)$ to $(x,B_{p,x_0}(y))$ induces a fibration
\[\pi : S\times_\rho M \to S \times_m \R ~, \]
whose fibers are horospheres tangent to $p$. Since those horospheres are contractible, $\pi$ admits a smooth section that allows to lift a $(j,m)$-equivariant map $\bar{f}: \H^2 \to \R$ to a $(j,\rho)$-equivariant map $f: \H^2 \to M$ such that
\[\bar{f}(x) = B_{p,x_0}(f(x)) \]
for all $x\in \H^2$. 

This map $f$ is $C'$-Lipschitz for some constant $C'$, but $C'$ is a priori greater than $C$. However, since $p$ is fixed by $\rho$, the map $F_{p,t}$ commutes with $\rho(\gamma)$ for all $\gamma \in \pi_1(S)$. Consequently, the map $F_{p,t} \circ f$ is another $(j,\rho)$-equivariant map. Now, by proposition \ref{p:ContractionBuseman}, for all $x$ and $y$ in $\H^2$ we have
\begin{eqnarray*}
d\left( F_{p,t}\circ f(x) , F_{p,t} \circ f(y) \right) & \leq & |B_{p,f(x)}(f(y))| + e^{-t} d(f(x), f(y)) \\
\ & \leq & |\bar{f}(y) - \bar{f}(x)| + e^{-t} C' \ d(x,y) \\
\ & \leq &  \left( C+ C'  e^{-t}\right) d(x,y) ~ .
\end{eqnarray*}
The map $F_{p,t} \circ f$ is thus $(C+ C' e^{-t} )$-Lipschitz. Since this is true for all $C > \Lip(j,m)$ and all $t\in \R_+$, we obtain \[\Lip(j,\rho) \leq \Lip(j,m)~.\]
This concludes the proof of lemma \ref{l:ConstanteLipschitzRepReductible}.
\end{proof}

\section{Continuity of the minimal Lipschitz constant}

We are now able to prove the continuity of the minimal Lipschitz constant. The arguments are not new and the proof essentially follows the one of Gu\'eritaud--Kassel \cite{GueritaudKassel} (where they assume that the target space is the hyperbolic space). As we said before, the tricky part will be to prove lower semi-continuity at a parabolic representation $\rho$.

\subsection{Continuity with respect to $j$}

Let us start by fixing a representation $\rho \in \Rep(S,\Isom(M))$. The continuity of the map $j \to \Lip(j,\rho)$ easily follows from the continuity of Thurston's asymmetric distance on $\Teich(S)$ \cite{Thurston86}. 
Let $j_0:\pi_1(S) \to \Isom(\H^2)$ be a Fuchsian representation and fix $\epsilon >0$. By continuity of Thurston asymetric distance, there exists a neighbourhood $U_\epsilon$ of $j_0$ such that for all $j \in U_\epsilon$, we have $\Lip(j_0,j) \leq 1+ \epsilon$ and $\Lip(j,j_0) \leq 1+ \epsilon$. Let $j$ be a point in $U_\epsilon$ and $h :\H^2 \to \H^2$ a $(j,j_0)$-equivariant and $(1+2\epsilon)$-Lipschitz map. Let otherwise $f : \H^2 \to M$ be a $(j_0,\rho)$-equivariant and $\left(\Lip(j_0,\rho) + \epsilon \right)$-Lipschitz map. Then the map 
$$f \circ h: \H^2 \to M$$
is $(j,\rho)$-equivariant and $(1+\epsilon)\left(\Lip(X,\rho)+ \epsilon\right)$-Lipschitz. We thus have
\[\Lip(j,\rho) \leq (1+\epsilon)\left(\Lip(j_0,\rho) + \epsilon \right)~.\]
Switching $j_0$ and $j$, we get in the same manner that
\[\Lip(j_0,\rho) \leq (1+\epsilon)\left(\Lip(j,\rho) + \epsilon \right)~.\]
We eventually obtain
\[\Lip(j_0,\rho) - \mathrm{Cste} \ \epsilon \leq \Lip(j,\rho) \leq \Lip(j_0,\rho) + \mathrm{Cste} \ \epsilon\]
for all $j\in U_\epsilon$, which proves the continuity of $j \mapsto \Lip(j,\rho)$ at $j_0$.\\

\subsection{Upper semi-continuity in $\rho$}

Since the group of isometries of a Riemannian manifold is a Lie group, the space $\Hom(S,\Isom(M))$ possesses a structure of real algebraic variety. In particular it is arcwise connected and we just have to show that if $(\rho_t)_{t\in [0,\epsilon[}$ is a continuous family of representations of $\pi_1(S)$ into $\Isom(M)$, then
\[ \limsup_{t\to 0} \Lip(j,\rho_t) \leq \Lip(j,\rho_0) ~ .\]

To this purpose, one can see $\rho_t$ as the monodromy of a flat connection $\nabla_t$ on a fixed $M$-bundle $E$ over $S\simeq j(\pi_1(S)) \backslash \H^2$, varying continuously with $t$. Fix $C > \Lip(j,\rho_0)$ and let $f_0$ be a $(j,\rho_0)$-equivariant and $C$-Lipschitz map from $\H^2$ to $M$. The map $f_0$ induces a section $s_0$ of the flat bundle $(E,\nabla_0)$, which in turn can be seen as a section $s_t$ of $(E,\nabla_t)$ associated to a $(j,\rho_t)$-equivariant map $f_t: \H^2\to M$. The continuity of $t\to \nabla_t$ and the compacity of $S$ ensure that $f_t$ is $\left( C + \alpha(t)\right)$-Lipschitz for some function $\alpha(t)$ going to $0$ when $t$ goes to $0$. We deduce that
\[\limsup_{t\to 0} \Lip(j,\rho_t) \leq C\]
For any $C> \Lip(j,\rho_0)$ and thus that
\[ \limsup_{t\to 0} \Lip(j,\rho_t) \leq \Lip(j,\rho_0) ~ .\]
This proves upper semi-continuity.

\subsection{Lower semi-continuity in $\rho$} \label{ass:LowerContinuity}

To prove lower semi-continuity, consider a sequence $(\rho_n)$ of representations into $\Isom(M)$ converging to $\rho$. Suppose that there exists a sequence $f_n$ of $(j,\rho_n)$-equivariant and $C$-Lipschitz maps. Let us prove that
\[\Lip(j,\rho) \leq C ~ .\]
Fix a base point $x$ in $\H^2$. If $f_n(x)$ does not escape to infinity in $M$, then, by Ascoli's theorem, a sub-sequence of $(f_n)$ converges uniformly on every compact set to a $C$-Lipschitz and $\rho$-equivariant map. We conclude that $\Lip(j,\rho) \leq C$. If $f_n(x)$ escapes to infinity, we can assume that $f_n(x)$ converges to a point $p$ in $\partial_\infty M$.

\begin{prop} \label{p:ConditionAscoli}
Assume $f_n(x)$ converges to $p\in \partial_\infty M$. Then $p$ is fixed by $\rho(\gamma)$ for all $\gamma \in \pi_1(S)$.
\end{prop}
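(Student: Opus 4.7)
The plan is to derive two expressions for the limit of the sequence $\rho_n(\gamma) f_n(x)$ in the compactification $M \cup \partial_\infty M$: a bounded-displacement argument will force it to converge to $p$, while continuity of the isometric action on the compactification will force it to converge to $\rho(\gamma) p$. Equating the two then gives $\rho(\gamma) p = p$.

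First I would use $(j,\rho_n)$-equivariance to write $\rho_n(\gamma) f_n(x) = f_n(j(\gamma) x)$, so the $C$-Lipschitz hypothesis yields
\[ d_M\bigl(f_n(x),\, \rho_n(\gamma) f_n(x)\bigr) \;\leq\; C \cdot d_{\H^2}\bigl(x,\, j(\gamma) x\bigr) \;=:\; R_\gamma, \]
a constant independent of $n$. Fix any base point $x_0 \in M$. Since $f_n(x) \to p \in \partial_\infty M$ we have $L_n := d(x_0, f_n(x)) \to +\infty$, and the triangle inequality gives $L'_n := d(x_0, \rho_n(\gamma) f_n(x)) \to +\infty$ as well. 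The hyperbolic law of cosines applied to the comparison triangle of $\tau(x_0, f_n(x), \rho_n(\gamma) f_n(x))$ then gives
\[ \cos \anglehyp{f_n(x), x_0, \rho_n(\gamma) f_n(x)} \;\geq\; \frac{\cosh L_n \cosh L'_n - \cosh R_\gamma}{\sinh L_n \sinh L'_n} \;\longrightarrow\; 1, \]
so that hyperbolic angle tends to $0$. In a Riemannian $\CAT(-1)$ space this forces the directions at $x_0$ of the geodesic segments $[x_0, f_n(x)]$ and $[x_0, \rho_n(\gamma) f_n(x)]$ to share the same limit, and combined with $L'_n \to +\infty$ one concludes that $\rho_n(\gamma) f_n(x) \to p$ in $M \cup \partial_\infty M$.

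Next I would invoke joint continuity of the action of $\Isom(M)$ on the visual compactification $M \cup \partial_\infty M$, a standard property of $\CAT(-1)$ spaces. One way to verify it is via the angular metric $d_y(\cdot,\cdot) = \angle_\infty(\cdot, y, \cdot)$ on $\partial_\infty M$: any isometry $g$ transforms $d_{x_0}$ into $d_{g^{-1} x_0}$, and $d_y$ depends continuously on $y \in M$, so convergence in both factors passes to the action. Since $\rho_n(\gamma) \to \rho(\gamma)$ in $\Isom(M)$ and $f_n(x) \to p$, this yields $\rho_n(\gamma) f_n(x) \to \rho(\gamma) p$ in the compactification. Comparing with the previous step gives $\rho(\gamma) p = p$, as desired.

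The main difficulty is making this joint continuity argument rigorous at boundary points. If one wishes to avoid that abstract appeal, a direct case analysis on the dynamical type of $\rho(\gamma)$ works instead: in the hyperbolic case, proposition \ref{p:DeplacementIsomHyperbolique} forces $f_n(x)$ to remain at bounded distance from the axis of $\rho_n(\gamma)$, whose endpoints converge to the two fixed points of $\rho(\gamma)$; in the parabolic case, a horoball argument using Busemann functions $B_{p', x_0}$ centered at the fixed point $p'$ of $\rho(\gamma)$ shows $f_n(x)$ is eventually trapped in horoballs around $p'$, forcing $p = p'$; finally, the elliptic case is ruled out, since the displacement of $f_n(x)$ under a fixed non-trivial elliptic isometry would grow without bound as $f_n(x)$ escapes to infinity, contradicting the uniform bound $d(f_n(x), \rho_n(\gamma) f_n(x)) \leq R_\gamma$.
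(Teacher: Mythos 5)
Your main argument is correct and is essentially the paper's own proof: the paper likewise uses $(j,\rho_n)$-equivariance to bound $d\bigl(f_n(x),\rho_n(\gamma)\cdot f_n(x)\bigr)$ by $C\,d(x,j(\gamma)\cdot x)$, deduces that $\rho_n(\gamma)\cdot f_n(x)\to p$, and concludes from the (uniform) convergence of $\rho_n(\gamma)$ to $\rho(\gamma)$ on $M\cup\partial_\infty M$ that $\rho(\gamma)\cdot p=p$; your comparison-triangle computation merely makes the middle step explicit where the paper asserts it. One caveat on your optional fallback only: the elliptic case is not ruled out by claiming the displacement must grow without bound (a rotation of $\H^3$ about a geodesic is elliptic yet has zero displacement along an axis escaping to infinity), although in that situation $p$ is still forced to be a boundary fixed point of $\rho(\gamma)$, so the conclusion survives; the main line of your proof does not rely on this.
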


\begin{proof}
Fix $\gamma \in \pi_1(S)$. Note first that the sequence
\[d(f_n(x), \rho_n(\gamma) \cdot f_n(x))\]
is bounded. Indeed,
\begin{eqnarray*}
d(f_n(x), \rho_n(\gamma) \cdot f_n(x)) & = & d( f_n(x), f_n(j(\gamma) \cdot x))\\
\ & \leq & C \, d(x,j(\gamma) \cdot x)~.
\end{eqnarray*}
Since $p \in \partial_\infty M$ and since $f_n(x)$ converges to $p$, we can conclude that $\rho_n(\gamma) \cdot f_n(x)$ also converges to $p$. But $\rho_n(\gamma)$ converges uniformly to $\rho(\gamma)$ on $M\cup \partial_\infty M$. We thus have
\[ \rho(\gamma) \cdot p = p ~ .\]
\end{proof}

We are now left with the case where $\rho$ fixes a point $p$ in $\partial_\infty M$. According to lemma \ref{l:CaracterisationsRepresentationsReductibles}, there is a morphism $m : \pi_1(S) \to \R$ such that for all $\gamma \in \pi_1(S)$, 
\[l(\rho(\gamma)) = | m(\gamma) | ~ .\]

Assume first that $m \equiv 0$. Then, according to lemma \ref{l:ConstanteLipschitzRepReductible}, we have $\Lip(j,\rho)= \Lip(j,m) = 0$ and lower semi-continuity is trivial at $\rho$. Otherwise, let us prove that we can find a sequence $(g_n)$ of isometries of $M$ such that $g_n \circ f_n(x)$ is bounded. (Note that this is immediate when $M$ is homogeneous.) Let $\gamma \in \pi_1(S)$ be such that $\rho(\gamma)$ is hyperbolic. Let $A$ be the axis of $\rho(\gamma)$ and $l = l(\rho(\gamma))$. For $n$ big enough, $\rho_n(\gamma)$ is an isometry of axis $A_n$ and translation length $l_n$, and we have
\[ A_n \tend{n\to +\infty} A \]
and
\[l_n\tend{n\to +\infty} l ~ .\]

We saw (propostion \ref{p:ConditionAscoli}) that $d\left( f_n(x), \rho_n(\gamma) \cdot f_n(x) \right)$ remains bounded when $n$ goes to $+\infty$. From \ref{p:DeplacementIsomHyperbolique}, there exists a constant $B'$ such that for $n$ big enough, 
\[d(f_n(x), A_n) \leq B'.\]
Let $a_n \in A_n$ be such that $(a_n)$ converges to a point $a \in A$. Let $y_n$ be the orthogonal projection of $f_n(x)$ onto $A_n$. Since $\rho_n(\gamma)$ acts as a translation of length $l_n$ along $A_n$, there is an integer $k_n\in \Z$ such that
\[d_M \left(\rho_n(\gamma)^{k_n} \cdot y_n, a_n\right) \leq l_n~.\]
We eventually obtain
\[d_M \left(\rho_n(\gamma)^{k_n} \cdot f_n(x), a\right) \leq B' + l_n + d_M(a_n, a)~,\]
and since $d_M(a_n,a)$ and $l_n$ are bounded, $\rho_n(\gamma)^{k_n} \circ f_n(x)$ is bounded.

Now the map $\rho_n(\gamma)^{k_n} \circ f_n$ is $(j,\rho_n(\gamma)^{-k_n} \cdot \rho)$-equivariant and $C$-Lipschitz (where $\rho_n(\gamma)^{-k_n} \cdot \rho$ denotes the composition of $\rho$ with the conjugation by $\rho_n(\gamma)^{-k_n}$). By Ascoli's theorem, a sub-sequence of $\left( \rho_n(\gamma)^{k_n} \circ f_n \right)$ converges to $f'$, which is $C$-Lipschitz and $(j,\rho')$-equivariant for some representation $\rho'$. We thus have $\Lip(j,\rho') \leq C$. On the other side, for all $\gamma' \in \pi_1(S)$, we have
\begin{eqnarray*}
l(\rho'(\gamma')) & = & \lim_{n\to +\infty} l\left( \rho_n(\gamma)^{-k_n} \rho_n(\gamma') \rho_n(\gamma)^{k_n} \right)\\
\ & = & \lim_{n \to + \infty} l(\rho_n(\gamma'))\\
\ & = & l(\rho(\gamma'))\\
\ & = & |m(\gamma')|~ .
\end{eqnarray*}
Therefore, by lemma \ref{l:CaracterisationsRepresentationsReductibles}, the representation $\rho'$ also fixes a point in $M \cup \partial_\infty M$ and by lemma \ref{l:ConstanteLipschitzRepReductible} we have
\[\Lip(j,\rho') = \Lip(j,m) = \Lip(j,\rho)~ .\]
We thus obtain
\[\Lip(j,\rho) \leq C ~ .\]
This concludes the proof of the lower semi-continuity of the map $\rho \mapsto \Lip(j,\rho)$.

\bibliographystyle{plain}
\bibliography{mabiblio}

\end{document}